\numberwithin{equation}{section}
\DeclareMathAlphabet{\mathpzc}{OT1}{pzc}{m}{it}
\newtheorem{thm}{Theorem}[section]
\newtheorem{conj}[thm]{Conjecture}
\newtheorem{cor}[thm]{Corollary}
\newtheorem{lemma}[thm]{Lemma}
\newtheorem{prop}[thm]{Proposition}
\newtheorem{defn}[thm]{Definition}
\newtheorem{rmk}[thm]{Remark}
\newtheorem{claim}[]{Claim}
\newcommand{\Z}{\mathbb Z}
\newcommand{\sgn}{\text{sign}}
\newcommand{\tr}{\text{tr}}
\begin{document}

\title[Twist Positivity, L-space knots, and Concordance]
{Twist Positivity, L-space knots, and Concordance}

\author{Siddhi Krishna and Hugh Morton}

\address{Department of Mathematics, Columbia University\\ New York, NY, United States}
\email{sk5026@columbia.edu}

\address{Department of Mathematica Sciences, University of Liverpool\\Liverpool, United Kingdom}
\email{morton@liv.ac.uk}


\maketitle

\medskip

\noindent \small{{\bf Abstract.} 
Many well studied knots can be realized as positive braid knots where the braid word contains a positive full twist; we say that such knots are \textit{twist positive}. Some important families of knots are twist positive, including torus knots, 1-bridge braids, algebraic knots, and Lorenz knots. We prove that if a knot is twist positive, the braid index appears as the third exponent in its Alexander polynomial. We provide a few applications of this result. After observing that most known examples of L-space knots are twist positive, we prove: if $K$ is a twist positive L-space knot, the braid index and bridge index of $K$ agree. This allows us to provide evidence for Baker's reinterpretation of the \textit{slice-ribbon conjecture}: that every smooth concordance class contains at most one fibered, strongly quasipositive knot. In particular, we provide the first example of an infinite family of positive braid knots which are distinct in concordance, and where, as $g \to \infty$, the number of hyperbolic knots of genus $g$ gets arbitrarily large. Finally, we collect some evidence for a few new conjectures, including the following: the braid and bridge indices agree for any L-space knot. \\
\tiny

\textit{2020 Mathematics Subject Classification:} 57K14, 57K45, 57K18 (primary); 57K10 (secondary).\\
\textit{Keywords:} Alexander polynomial, positive braids, braid index, concordance
}
 
\normalsize 

\section{Introduction} \label{section:intro}


By Alexander's theorem \cite{Alexander}, every link in $S^3$ can be realized as the closure of some braid. Some properties of braids can be ported into important topological properties of their closures. For example, if a braid $\beta$ is \textit{homogeneous} (i.e. for every $i$, every $\sigma_i$ appearing in $\beta$ has the same sign), then the closure is a fibered link in $S^3$ \cite{Stallings:Fibered}. In a different vein, the closures of \textit{quasipositive braids} are exactly the links arising as the transverse intersections of smooth algebraic curves $f^{-1}(0) \subset \mathbb{C}^2$ (here, $f$ is a non-constant polynomial) with the unit sphere $S^3 \subset \mathbb{C}^2$ \cite{Rudolph:QP, BoileauOrevkov, Hayden:QP}. Another property of braids with important topological consequences is \textit{twist positivity}:

\begin{defn}
Let $\beta$ be a positive braid braid on $n$ strands. If $\beta$ can be factored to contain a positive full twist on $n$ strands $($i.e. $\beta = \Delta^2 \gamma$ where $\gamma$ is a positive braid word$)$, then $\beta$ is a \textbf{twist positive} braid on $n$ strands. If $K$ can be realized as the closure of a twist positive braid on $n$ strands, then $K$ is a \textbf{twist positive} knot on $n$ strands.
\end{defn}

\begin{wrapfigure}{R}{.15\linewidth}
    \centering
    \vspace{-0.8cm}
    \includegraphics[scale=.18]{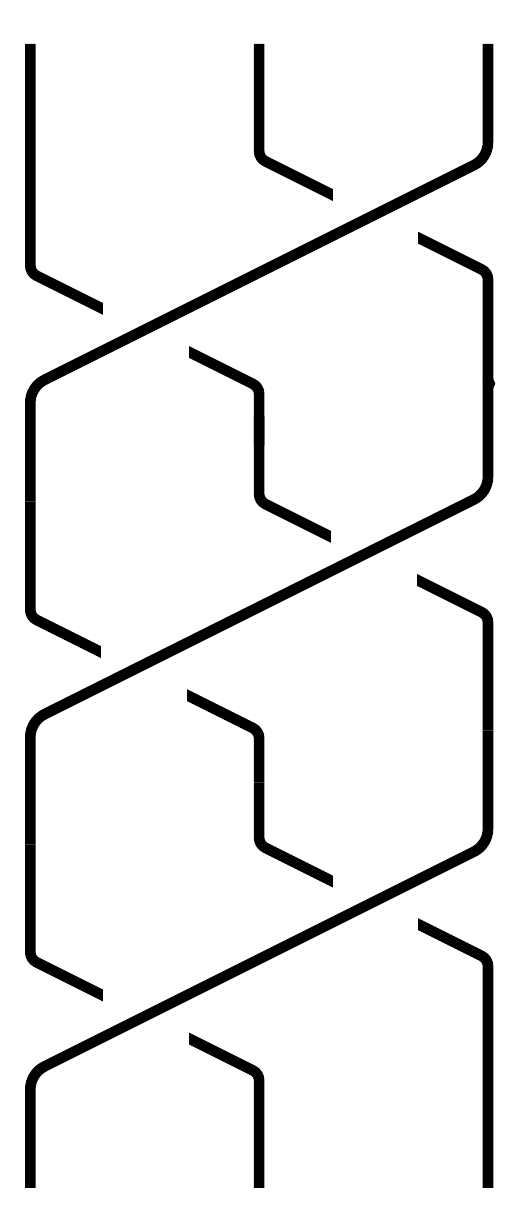}
    \captionof{figure}{}
    \label{fig:FullTwist}
\end{wrapfigure}

A full twist on three strands is seen in \Cref{fig:FullTwist}; here, $\Delta^2 = (\sigma_2 \sigma_1)^3$. Twist positivity is a well studied property: perhaps most notably, Morton and Franks-Williams proved that if $K$ is twist positive on $n$ strands, then the braid index of $K$ is $n$ \cite{Morton:KnotPolys,FranksWilliams}. 

In this paper, we show that twist positivity unifies classical results in knot theory with modern concordance invariants. Our first theorem is about the Alexander polynomial of twist positive knots. 

\begin{thm} \label{thm:AlexPoly}
Suppose $K$ is twist positive on $n$ strands. Then the Alexander polynomial of $K$ has the form $\Delta_K(t) = 1 - t + t^n +t^{n+1}R(t)$, where $R(t) \in \Z[t]$. In particular, the braid index of $K$ appears as an exponent in $\Delta_K(t)$. 
\end{thm}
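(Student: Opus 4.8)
The plan is to compute the Alexander polynomial via the Burau representation of the braid group, exploiting the factorization $\beta = \Delta^2 \gamma$. Recall that for a braid $\beta$ on $n$ strands whose closure is a knot $K$, one has (up to units) $\Delta_K(t) \doteq \frac{\det(I - \bar\rho(\beta))}{1 + t + \cdots + t^{n-1}}$, where $\bar\rho$ is the reduced Burau representation on $(n-1)\times(n-1)$ matrices over $\Z[t^{\pm 1}]$. So the first step is to understand $\bar\rho(\Delta^2)$: the full twist is central, so $\bar\rho(\Delta^2)$ is a scalar matrix, and a direct check on generators shows $\bar\rho(\Delta^2) = t^{n}\, I$. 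Writing $M = \bar\rho(\gamma)$, we then need $\det(I - t^{n} M)$, and the key fact is that $M$ is a product of the generating Burau matrices $\bar\rho(\sigma_i)$, each of which has entries that are polynomials in $t$ with nonnegative-degree terms and has a controlled low-order behavior: at $t = 0$ the matrix $\bar\rho(\sigma_i)$ specializes to a (lower or upper) triangular unipotent-type matrix, so $M(0)$ is the Burau matrix of the underlying permutation braid, in particular $\det(I - M(0)) = \det(I - P)$ for a permutation-type matrix $P$.

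Next I would track the first few coefficients of $\det(I - t^n M(t))$ as a polynomial in $t$. Because $t^n M(t)$ vanishes to order $n$ at $t=0$, expanding the determinant gives $\det(I - t^n M(t)) = 1 - t^n \operatorname{tr}(M(t)) + O(t^{2n})$, and since $M(t) = M_0 + M_1 t + \cdots$ with $M_0 = \bar\rho(\gamma)|_{t=0}$, the coefficient of $t^n$ in the whole expression is $-\operatorname{tr}(M_0)$ and the coefficient of $t^{n+1}$ is $-\operatorname{tr}(M_1)$, etc.; crucially every term of $\det(I - t^n M)$ other than the leading $1$ is divisible by $t^n$. Dividing by $1 + t + \cdots + t^{n-1}$, whose inverse power series is $1 - t + (\text{higher order in } t^n\text{-adic sense})$... more carefully: $\frac{1}{1+t+\cdots+t^{n-1}} = \frac{1-t}{1-t^n} = (1-t)(1 + t^n + t^{2n} + \cdots)$. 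Multiplying $(1-t)(1 + t^n + \cdots)$ by $1 + (\text{multiple of } t^n)$ gives $(1-t)\bigl(1 + t^n S(t)\bigr)$ for some $S(t) \in \Z[[t]]$, i.e. $1 - t + t^n(1-t)S(t)$. Since the result is actually a polynomial (the Alexander polynomial), this is $1 - t + t^n + t^{n+1} R(t)$ provided the coefficient of $t^n$ is exactly $1$; that coefficient is $-S(0)$ coming from $-\operatorname{tr}(M_0)/\bigl(1 + t + \cdots\bigr)$ evaluated appropriately, and it equals $1$ precisely because $\gamma$ is a \emph{positive} braid (so that, combined with normalization conventions, $\operatorname{tr}(M_0) = -1$, or rather the relevant low-order contribution works out to give $+t^n$).

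So the main obstacle, and the step requiring genuine care rather than bookkeeping, is pinning down the sign and exact value of the $t^n$-coefficient: I need to show it is $+1$ and not some other integer, which amounts to a precise analysis of $\det(I - t^n M(t))$ modulo $t^{n+1}$ together with the correct normalization of $\Delta_K$ (and checking the symmetry/units are fixed so that the constant term really is $1$ and the next term really is $-t$). I expect this comes down to the observation that for a positive braid $\gamma$ the matrix $\bar\rho(\gamma)$ specializes at $t=0$ to the permutation matrix of a cyclic-type permutation (or more precisely that $\operatorname{tr}$ of the relevant specialization is forced), using that the closure is a knot so the underlying permutation is an $n$-cycle. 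An alternative, possibly cleaner, route to the same conclusion is to use that twist positive knots are fibered positive braid knots, so $\Delta_K$ is monic of degree $2g = c - n + 1$ with $\Delta_K(t) = \sum (-1)^i a_i t^i$, $a_i \geq 0$, combined with the known fact (Morton, for fibered strongly quasipositive knots, or via the skein/HOMFLY bound) that $a_0 = a_n = 1$ and $a_i = 0$ for $0 < i < n$ except $a_1 = 1$; I would use whichever of the two arguments gives the lower-order coefficients most transparently, and the Burau computation seems most self-contained.
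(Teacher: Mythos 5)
Your framework is the same as the paper's: reduced Burau, $\bar\rho(\Delta^2)=t^nI_{n-1}$, expansion of $\det(I-t^nM(t))=1-t^n\operatorname{tr}(M(t))+O(t^{2n})$ with $M=\bar\rho(\gamma)$, and multiplication by $(1-t)(1+t^n+t^{2n}+\cdots)$. That correctly reduces everything to the step you yourself flag as the ``main obstacle'': showing the coefficient of $t^n$ is exactly $+1$, which (after fixing a sign slip --- that coefficient is $1-\operatorname{tr}(M(0))$, i.e.\ $S(0)$, not $-S(0)$) is equivalent to $\operatorname{tr}\bigl(\bar\rho(\gamma)\big|_{t=0}\bigr)=0$. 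This step is left unproved in your plan, and the two mechanisms you suggest for it are wrong. First, $\bar\rho(\gamma)\big|_{t=0}$ is \emph{not} the permutation matrix of the underlying $n$-cycle: at $t=0$ each generator matrix $\sigma_i(0)$ is a singular idempotent (identity except that row $i$ becomes the unit vector $e_{i+1}^T$), and $t=1$, not $t=0$, is the permutation specialization of Burau. Second, the guess $\operatorname{tr}(M_0)=-1$ (presumably the character of an $n$-cycle in the reduced permutation representation) would give $t^n$-coefficient $1-(-1)=2$ and contradict the theorem; the correct value is $\operatorname{tr}(M_0)=0$.

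The paper closes exactly this gap with its Claim: at $t=0$ the matrices $\sigma_i(0)$ are upper triangular with $0$ in the $(i,i)$-entry, so the diagonal entries of the product multiply; since $\widehat\beta$ is a knot the underlying permutation is an $n$-cycle, so every generator $\sigma_i$ appears (in $\gamma$ as well, since $\Delta^2$ is pure), and hence every diagonal entry of $M(0)$ contains a zero factor, giving $\operatorname{tr}(M(0))=0$ and the coefficient $1$. Note also that your fallback route (``the known fact that $a_0=a_n=1$ and $a_i=0$ for $0<i<n$ except $a_1=1$'') is essentially the statement being proved, so it cannot be invoked; positivity of $\gamma$ is only used to keep the Burau entries in $\Z[t]$, while the vanishing trace comes from the knot (one-component) hypothesis, not from positivity alone.
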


The strength and utility of \Cref{thm:AlexPoly} is leveraged by way of the connection between the Alexander polynomial and knot Floer homology: not only does knot Floer homology categorify the Alexander polynomial, but it also provides a suite of powerful knot invariants. We recall: an \textit{L-space knot} in $S^3$ is a non-trivial knot which admits a positive Dehn surgery to an \textit{L-space}, the closed, connected, oriented 3-manifolds with simple Heegaard Floer homology. For L-space knots, both the Alexander polynomial and knot Floer homology take on a restricted form \cite[Theorem 1.2]{OSz:LensSpaceSurgeries}. This connection leads to our first application of \Cref{thm:AlexPoly}. 

\begin{thm} \label{thm:Lorenz}
If $K$ is a twist positive L-space knot, then its braid index and bridge index agree. Moreover, $K$ realizes the minimal braid and bridge indices of its concordance class. 
\end{thm}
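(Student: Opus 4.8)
The plan is to establish the single chain $\beta(K') \ge b(K') \ge n = \beta(K) = b(K)$ for every knot $K'$ concordant to $K$, where $n$ is the number of strands of a twist positive braid presenting $K$ and $\beta(\cdot),b(\cdot)$ denote braid and bridge index. Taking $K'=K$ this forces $b(K)=\beta(K)=n$, and for general $K'$ it exhibits $K$ as a simultaneous minimizer of braid and bridge index in its concordance class.

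First I would dispose of the two outer terms. Since $K$ is twist positive on $n$ strands, the theorem of Morton and Franks--Williams recalled in the introduction \cite{Morton:KnotPolys, FranksWilliams} gives $\beta(K)=n$. And a knot presented as the closure of an $m$-braid always admits a bridge presentation with $m$ maxima, so $b(K'') \le \beta(K'')$ for every knot $K''$; in particular $b(K)\le n$. Hence everything reduces to the lower bound $b(K')\ge n$ for all $K'$ in the concordance class of $K$ (for the equality $b(K)=\beta(K)$ alone one only needs this for $K'=K$).

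Next I would extract the relevant feature of $\Delta_K$. Because $K$ is an L-space knot, by \cite[Theorem 1.2]{OSz:LensSpaceSurgeries} its symmetrized Alexander polynomial has coefficients in $\{0,\pm 1\}$, the nonzero ones alternating in sign and equal to $+1$ at both extremes; and by \Cref{thm:AlexPoly} it equals $1 - t + t^n + t^{n+1}R(t)$. Matching these two descriptions shows that the nonzero coefficients, read upward from degree $0$, are $+1,-1$ at exponents $0,1$, then vanish through exponent $n-1$, then equal $+1$ at exponent $n$; so $n$ is the smallest positive exponent of $\Delta_K$ carrying a positive coefficient (equivalently, the smallest positive element of the formal semigroup of the L-space knot $K$ is $n$).

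The final, and hardest, step is the lower bound $b(K')\ge n$ for every $K'$ concordant to $K$. The plan is to exhibit a concordance invariant $\Phi$ with $b(K'')\ge\Phi(K'')$ for all knots $K''$, and such that $\Phi(K)$ is computed from $\Delta_K$ via the previous step to be $\ge n$; then $\Phi(K')=\Phi(K)\ge n$ gives $b(K')\ge n$ for the whole class, while an appeal to the Fox--Milnor constraint on $\Delta_{K'}$ coming from the slice knot $K'\#\overline{K}$ gives an alternate route should $\Phi$ be most naturally defined at the level of Alexander polynomials. I expect the essential difficulty to be precisely the existence and identification of such a $\Phi$: for a general knot the bridge index can be arbitrarily smaller than the braid index, so whatever bound is used must genuinely exploit the L-space hypothesis — plausibly through the rigidity \cite[Theorem 1.2]{OSz:LensSpaceSurgeries} imposes on $\widehat{HFK}$, which for an L-space knot is recovered from $\Delta_K$. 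Granting this, the chain $\beta(K')\ge b(K')\ge n=\beta(K)=b(K)$ closes up, yielding both the equality of braid and bridge index and the asserted minimality in the concordance class.
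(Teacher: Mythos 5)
Your reduction is set up correctly (Franks--Williams gives $i(K)=n$, bridge index is bounded above by braid index, and the form $\Delta_K(t)=1-t+t^n+t^{n+1}R(t)$ from \Cref{thm:AlexPoly} is the right input), but the proposal stops exactly where the real content of the theorem lies: you never produce the invariant $\Phi$ that bounds the bridge index from below, and you explicitly defer its ``existence and identification.'' That is not a routine step one can grant --- it is the entire point. The paper's proof supplies it via the knot Floer torsion order: Juh\'asz--Miller--Zemke prove $Ord_v(K)\le br(K)-1$ for every knot \cite[Corollary 1.9]{JuhaszMillerZemke}, and for L-space knots they compute $Ord_v(K)$ as the maximal gap between consecutive nonzero exponents of $\Delta_K$ \cite[Lemma 5.1]{JuhaszMillerZemke}. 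The relevant feature of $\Delta_K$ is therefore not that $n$ is the smallest positive exponent with positive coefficient, but that $n-1$ occurs as a \emph{gap} between the consecutive exponents $1$ and $n$; this yields $n-1\le Ord_v(K)\le br(K)-1\le i(K)-1=n-1$ and hence $br(K)=i(K)=n$.

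For the concordance-minimality statement your proposed alternate route through Fox--Milnor cannot work: the Fox--Milnor factorization constrains $\Delta_{K'}$ of a concordant knot $K'$, but no condition on the Alexander polynomial alone bounds the bridge index of an arbitrary knot from below, so one genuinely needs a concordance invariant beneath the torsion order. The paper uses the Dai--Hom--Truong--Stoffregen invariant $N$ \cite{DHST:ConcordanceHomomorphisms}, which satisfies $N(J)\le Ord_v(J)$ for all knots and agrees with $Ord_v$ on L-space knots, giving $i(J)\ge br(J)\ge Ord_v(J)+1\ge N(J)+1=N(K)+1=Ord_v(K)+1=br(K)=i(K)$ for every $J$ concordant to $K$. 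In short, your skeleton matches the paper's, but the two Floer-theoretic inputs ($Ord_v$ with its bridge-index bound and Alexander-gap formula, and $N$ with its concordance invariance) are the missing pieces, and without them the argument does not close.
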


\begin{rmk}
As we observe in \Cref{section:Lorenz}, most explicit families of L-space knots are not only twist positive, but they also happen to be Lorenz knots. 
\end{rmk}

Lorenz knots exhibit rich phenomena from the dynamical, geometric, and braid theoretic perspectives. They have been intensely studied since their introduction in 1963; see, for example, \cite{BirmanWilliams, BirmanKofman, ChampanerkarFuterKofmanNeumannPurcell, Dehornoy:Lorenz, dePaivaPurcell}, or \cite{Birman:Lorenz} for a survey. In \Cref{section:Lorenz}, we observe L-space Lorenz knots include the well-studied \textit{algebraic knots}, \textit{1-bridge braids} \cite{Gabai:1BridgeBraids, GLV:11LSpace}, and some \textit{twisted torus knots} \cite{Vafaee:TwistedTorusKnots}, amongst others.

Therefore, \Cref{thm:Lorenz} generalizes results of both Juh\'asz-Miller-Zemke \cite{JuhaszMillerZemke} (who prove \Cref{thm:Lorenz} for torus knots) and Hom-Lidman-Park \cite{HomLidmanPark}, who use input from bordered Floer homology to prove it for algebraic knots. Our results suggests that twist positivity is the important topological property guiding this behavior.

Birman-Williams \cite[Section 5]{BirmanWilliams} noticed that Lorenz knots are always twist positive on some number of strands, and conjectured that this quantity was actually the braid index. Indeed, this conjecture follows with the subsequent announcement of the Franks-Williams result \cite{FranksWilliams}. To the best of our knowledge, the bridge index of Lorenz knots has not been investigated. \Cref{thm:Lorenz} proves that often, the bridge index and the braid index for a Lorenz knot agree, hence the Lorenz presentation cannot be minimized further to decrease the bridge index.

\Cref{thm:AlexPoly} and \Cref{thm:Lorenz} suggest that twist positivity is not only an important 3-dimensional property, but that it also provides some insights into 3.5- and 4-dimensional phenomena. Indeed, we next study concordance. Baker \cite{Baker:SliceRibbon} observed a novel relationship between concordance properties of fibered knots and Fox's \textit{slice-ribbon conjecture}, which asks whether every slice knot admits a ribbon presentation in $S^3$. In particular, Baker conjectured that \textit{if two fibered, strongly quasipositive knots are concordant, then they are isotopic}. Baker observes that either his conjecture is true, or the Slice-Ribbon conjecture is false\footnote{Baker phrases his conjecture in terms of \textit{tight fibered knots}, but by \cite{Hedden:Positivity}, this is equivalent to studying fibered, strongly quasipositive knots.}. Therefore, studying the concordance properties of fibered, strongly quasipositive knots is of central interest. L-space knots form a prominent class of fibered, strongly quasipositive knots. As noted by Abe-Tagami \cite{AbeTagami}, if Baker's conjecture is true, then every concordance class should have at most one L-space knot.

\begin{thm}\label{thm:DistinctConcordanceClasses}
Let $\mathcal{S} = \mathcal{T} \cup \mathcal{L}$, where $\mathcal{T}$ is the set of positive torus knots, and $\mathcal{L}$ is the set of L-space knots of braid index three. Every concordance class contains at most one knot from $\mathcal{S}$. 
\end{thm}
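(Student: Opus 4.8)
The plan is to leverage membership in $\mathcal{S}$ to make the relevant concordance invariants rigid, reduce to comparing two knots of $\mathcal{S}$ that share both a Seifert genus and a braid index, and then dispose of the single surviving case---braid index three---using the classification of such L-space knots together with the Fox--Milnor condition and the rigid shape of the Alexander polynomial from \Cref{thm:AlexPoly}.

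First I would record that every $K\in\mathcal{S}$ is an L-space knot, hence fibered and strongly quasipositive, so $\tau(K)=g_4(K)=g_3(K)=\tfrac12\deg\Delta_K$; thus the Seifert genus is constant on any concordance class meeting $\mathcal{S}$. Next, every $K\in\mathcal{S}$ is in fact twist positive on $\beta(K)$ strands, where $\beta(K)$ is its braid index: for a positive torus knot $T(p,q)$ with $p<q$ this is the standard presentation on $p$ strands, and for an L-space knot of braid index three it follows from the classification of such knots. Hence \Cref{thm:Lorenz} applies to every $K\in\mathcal{S}$, and in particular $\beta(K)$ equals the minimal braid index over the whole concordance class of $K$; so the braid index, too, is constant on concordance classes meeting $\mathcal{S}$. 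Consequently, if $K_1,K_2\in\mathcal{S}$ are concordant then $g(K_1)=g(K_2)$ and $\beta(K_1)=\beta(K_2)$, and the theorem reduces to showing that at most one knot of $\mathcal{S}$ realizes a given pair $(g,\beta)$. When $\beta=2$ the only candidate is $T(2,2g+1)$; when $\beta\ge 4$ the knot must lie in $\mathcal{T}$ (as $\mathcal{L}$ contains only braid index three knots), and among positive torus knots the pair $\bigl(\tfrac12(p-1)(q-1),\min\{p,q\}\bigr)$ recovers $\{p,q\}$; so in both cases there is at most one. This leaves exactly one nontrivial case: two L-space knots of braid index three that are smoothly concordant must be isotopic (this also subsumes a braid-index-three torus knot $T(3,q)$ concordant to a non-torus L-space knot of braid index three, since $T(3,q)$ itself lies in $\mathcal{L}$).

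For the braid-index-three case I would invoke the classification of L-space knots of braid index three: up to isotopy these are a concrete parametrized family (e.g.\ of twisted torus knots, or of closures of prescribed positive $3$-braids), each with an Alexander polynomial one can write down, and each of the rigid form $\Delta_K(t)=1-t+t^3+t^4R(t)$ from \Cref{thm:AlexPoly} with coefficients alternating in $\{0,\pm1\}$. Given concordant $K_1,K_2$ in this family with $g(K_1)=g(K_2)=g$, the Fox--Milnor condition yields $\Delta_{K_1}(t)\,\Delta_{K_2}(t)\doteq f(t)f(t^{-1})$ for some $f\in\Z[t]$, and the aim is to show that the strong constraints on the two factors (palindromic, unit coefficients alternating in sign, leading terms $1-t+t^3+\cdots$, common degree $2g$) force $\Delta_{K_1}=\Delta_{K_2}$, and then that an L-space knot of braid index three is determined by its Alexander polynomial---again from the classification, or from the fact that $\Delta_K$ determines $CFK^\infty(K)$ for L-space knots. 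If the Fox--Milnor analysis is not by itself decisive, I would reinforce it with the Tristram--Levine signature function or with the $\Upsilon$ invariant, both concordance invariants computable directly from $\Delta_K$ when $K$ is an L-space knot.

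The main obstacle is precisely this last step---excluding a concordance between two \emph{distinct} braid-index-three L-space knots of the same Seifert genus, of which there are genuinely many once $g$ is large. Every cheap invariant ($g_4$, braid index) agrees on such a pair, so the contradiction has to be squeezed from a finer invariant; the real work is making the Alexander-polynomial bookkeeping for the classified family go through, with the classification of braid-index-three L-space knots itself being the other load-bearing input.
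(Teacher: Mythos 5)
Your reduction is sound and matches the paper's opening moves: membership in $\mathcal{S}$ forces $\tau=g$, twist positivity plus \Cref{thm:Lorenz} makes the braid index a concordance invariant on these classes, and the problem collapses to showing that two distinct braid-index-three L-space knots of the same Seifert genus (including the torus knot $T(3,q)$ of that genus) cannot be concordant. But at exactly that point--which you yourself flag as ``the main obstacle''--your proposal stops short of an argument. The Fox--Milnor route you sketch does not obviously close the gap: for two generic same-genus knots $T(3,q;2m)$ and $T(3,q';2m')$, the Alexander polynomials are not products of distinct cyclotomic factors, so knowing $\Delta_{K_1}\Delta_{K_2}\doteq f(t)f(t^{-1})$ does not by itself force $\Delta_{K_1}=\Delta_{K_2}$ without controlling the irreducible factorizations, which you have not computed; and even the implication ``equal Alexander polynomials $\Rightarrow$ isotopic within the family'' is asserted rather than proved. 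Deferring to ``Tristram--Levine or $\Upsilon$ if Fox--Milnor is not decisive'' names candidate tools but does not carry out the computation that constitutes the theorem's real content.

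The paper resolves this step differently and concretely: it normalizes every braid-index-three L-space knot to the form $T(3,3k+1;2m)$ (\Cref{lemma:AdjacentTTKs}, \Cref{StandardForm}), counts the same-genus candidates (\Cref{lemma:PotentialConcordance}), and then computes the ordinary signature $\sigma$ of the entire family by exhibiting an explicit Goeritz matrix and diagonalizing it inductively (\Cref{GoeritzSetup}, \Cref{DiagonalizeG(K)}, \Cref{thm:Signature}). The upshot (\Cref{lemma:DifferentSignaturesCase1}--\ref{lemma:DifferentSignaturesCase3}) is that same-genus members have pairwise distinct signatures except for exactly one pair per genus, namely the torus knot against one specific twisted torus knot; only for that single exceptional pair does the paper invoke Fox--Milnor, and there the argument works precisely because $\Delta_{T(3,q)}$ is a squarefree product of cyclotomic polynomials, so it suffices to verify $\Delta_{K_1}\neq\Delta_{K_2}$ by a short reduced Burau trace computation (\Cref{lemma:DifferentSignaturesCase4}). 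So your proposal is not wrong in outline, but the load-bearing invariant in the paper is the classical signature, computed by hand for the whole family, with Fox--Milnor playing only a cameo role in a case where the cyclotomic structure makes it tractable; as written, your argument has a genuine gap where that computation should be.
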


\begin{rmk}
Our proof of \Cref{thm:DistinctConcordanceClasses} is independent of Litherland's proof that torus knots are linearly independent in concordance \cite{Litherland:TorusKnots}. 
\end{rmk}

\Cref{thm:DistinctConcordanceClasses} is proved by using the braid index, $\tau(K)$, and the Seifert genus in concert.

Baader-Dehorney-Liechti \cite{BaaderDehornoyLiechti} proved that only finitely many positive braid knots can be concordant to each other (c.f. \cite{Stoimenov:ConcordancePositiveBraids}). However, to the best of our knowledge, we produce the first infinite family of hyperbolic such examples.

\begin{cor} \label{cor:InfinitelyDistinctConcordance}
There is an explicit infinite family of positive braid knots that are distinct in concordance, where as $g \to \infty$, the number of \textit{hyperbolic} knots of genus $g$ gets arbitrarily large.
\end{cor}

While L-space knots are well-studied, a classification remains elusive; nevertheless, collecting their properties remains a central goal. L-space knots are ``simple'' from many perspectives: they have simplest possible knot Floer homology \cite{OSz:LensSpaceSurgeries}, ``simple'' knot exteriors (their exteriors are realized as mapping tori) \cite{Ghiggini:Fibered, YiNi:FiberedKnots}, and, heuristically, they seem to have ``simple'' monodromy \cite{MisevSpano, Ni:HFKmonodromy, GhigginiSpano}. The bridge index and the braid index provide different measures of complexity of a knot, as the former is a lower bound on the latter. The difference between these quantities is itself a measure of complexity. We conjecture that this defect is trivial for L-space knots. 

\begin{conj} \label{LspaceIndex}
If $K$ is an L-space knot, then its braid index and bridge index agree.
\end{conj}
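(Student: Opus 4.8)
The cleanest route is to reduce \Cref{LspaceIndex} to \Cref{thm:Lorenz}: since that theorem already settles the braid index versus bridge index question for \emph{twist positive} L-space knots, it would suffice to prove that \emph{every} L-space knot is twist positive. L-space knots are already known to be fibered \cite{Ghiggini:Fibered, YiNi:FiberedKnots} and strongly quasipositive \cite{Hedden:Positivity}, so the content would be to upgrade these to a positive braid word that can be factored through a full twist $\Delta^2$. The plan is: first verify this by hand for the explicit families — torus knots, algebraic knots, $1$-bridge braids \cite{Gabai:1BridgeBraids, GLV:11LSpace}, and the L-space twisted torus knots of Vafaee \cite{Vafaee:TwistedTorusKnots} are all manifestly twist positive (see \Cref{section:Lorenz}) — and then observe that the relevant satellite operations preserve twist positivity: cabling an L-space knot by a sufficiently positive torus-knot pattern again produces an L-space knot, and a cable of a twist positive knot along such a pattern is again a positive braid containing a full twist. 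The hard part will be a \emph{uniform} argument covering all L-space knots simultaneously. One would like to feed the constraints of \Cref{thm:AlexPoly} — that $\Delta_K(t) = 1 - t + t^n + t^{n+1} R(t)$ with $n$ the braid index — together with the fact that $\widehat{HFK}(K)$ is the ``staircase'' determined by $\Delta_K$ \cite{OSz:LensSpaceSurgeries}, into a procedure that outputs a twist positive $n$-braid. This is exactly where I expect to get stuck: there is no classification of L-space knots, and no known mechanism converts Floer-theoretic data into a braid word. It is even conceivable that some L-space knot is not a positive braid closure at all, in which case this line collapses entirely.

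A more self-contained attack bypasses braids. For any knot one has the sandwich $\mu(K) \le b(K) \le \beta(K)$, where $\mu$ is the meridional rank and $\beta$ the braid index (the right inequality because a closed $n$-braid is automatically in bridge position with $n$ bridges; the left by counting Wirtinger generators coming from a minimal bridge diagram). Thus it would suffice to show $\mu(K) = \beta(K)$ for every L-space knot $K$, since then the sandwich also forces $b(K) = \beta(K)$; this is consistent with, but does not require, the meridional rank conjecture. The difficulty is that lower bounds on meridional rank are notoriously hard to produce. The only leverage I see is the rigid structure of L-space knot exteriors as mapping tori: one might try to use the action of the fibered (right-veering) monodromy on $H_1$ of the fiber, or the surgery characterization of L-space knots, to force many distinct meridional conjugacy classes. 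Making such an argument quantitative enough to reach $\beta(K)$ — which for L-space knots is only known to equal the Morton--Franks--Williams bound in the twist positive case — is where I expect this route to stall as well.

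In either approach the bottleneck is the absence of a structural handle on a general L-space knot: what one really wants is a theorem asserting that the Floer-theoretic simplicity of L-space knots is mirrored by a twist positive braid presentation, after which \Cref{thm:Lorenz} finishes the job.
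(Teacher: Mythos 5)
You have not produced a proof, and to be clear, neither does the paper: the statement is \Cref{LspaceIndex}, a conjecture, and the paper only assembles evidence for it (the twist positive case via \Cref{thm:Lorenz}, the Baker--Kegel family, cables, and low-complexity families). Both of your routes terminate in an admitted obstruction, so what you have is a research plan with a genuine gap at exactly the point the paper also leaves open. More importantly, your first route --- prove that \emph{every} L-space knot is twist positive and then invoke \Cref{thm:Lorenz} --- is not merely stuck but unworkable: twist positive knots are in particular positive braid closures, and the paper recalls that the first knot in the Baker--Kegel family \cite{BakerKegel} is a hyperbolic L-space knot that is provably \emph{not} braid positive. So universal twist positivity is false, and the reduction cannot go through as stated. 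What is salvageable, and what the paper actually uses for that family, is the mechanism inside the proof of \Cref{thm:Lorenz} rather than twist positivity itself: if $i(K)\le n$ is known from an explicit braid word and the exponents of $\Delta_K(t)$ exhibit a gap of size $n-1$, then the Juh\'asz--Miller--Zemke torsion-order bound $Ord_v(K)\le br(K)-1$ forces $n-1\le Ord_v(K)\le br(K)-1\le i(K)-1\le n-1$, hence $br(K)=i(K)$. Twist positivity enters only as a sufficient condition (via \Cref{thm:AlexPoly}) for producing that gap; the right generalization to aim at is the Alexander polynomial shape predicted in \Cref{conj:AnotherTerm}, not a braid-positivity statement.

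Your second route, through meridional rank, asks for $\mu(K)=\beta(K)$, which is strictly stronger than the conjecture and for which neither you nor the paper has any quantitative lower-bound technology; as evidence-gathering it buys nothing beyond the sandwich inequality you already wrote down. Your cabling remark does point at genuine content, but you frame it as ``cabling preserves twist positivity,'' whereas the paper's \Cref{lemma:ConjForCables} needs no positivity at all: it combines Schubert--Schultens, $br(K_{p,q})\ge p\cdot br(K)$, with Williams' equality $i(K_{p,q})=p\cdot i(K)$ to propagate $br=i$ from $K$ to all of its L-space cables directly. In summary: the blanket statement remains a conjecture, your proposed reduction to \Cref{thm:Lorenz} cannot cover all L-space knots, and the partial results you could legitimately claim are the ones the paper proves --- the twist positive case, families with an explicit Alexander-polynomial gap such as Baker--Kegel's, and closure under L-space cabling.
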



This conjecture is true for the most well known families of L-space knots: 

\begin{itemize}
\item The only L-space two-bridge knots are $T(2,n)$ torus knots. \cite{Goodrick:2BridgeAlternating, OSz:LensSpaceSurgeries}. 
\item The only L-space Montesinos knots are the $P(-2,3,q)$, $q \geq 1$ pretzel knots \cite{LidmanMoore:PretzelKnots, BakerMoore:Montesinos}. They are all twist positive on either two or three strands, hence \Cref{thm:Lorenz} applies.
\item As proved in this article, the braid and bridge index agree for \textit{1-bridge braids}. 
\item Baker-Kegel \cite{BakerKegel} constructed an infinite family of hyperbolic L-space knots where the first knot is provably not braid positive, but the braid positivity status of the remaining knots remains unknown. We prove \Cref{LspaceIndex} for the Baker-Kegel family in \Cref{Discussion}.
\item Combining work of Hedden \cite{Hedden:CablingII} and Hom \cite{Hom:Cabling}, it is known that $K_{p,q}$ is an L-space knot if and only if $K$ is and $q \geq p(2g(K)-1)$. In \Cref{Discussion}, we show that if \Cref{LspaceIndex} is true for an L-space knot $K$, it is true for all of its L-space cables.
\end{itemize}

The Alexander polynomials of L-space knots have been of interest ever since Ozsv\'ath-Szab\'o proved that the coefficients of these polynomials take values in $\{-1,0,1\}$ \cite{OSz:LensSpaceSurgeries}. Hedden-Watson  \cite[Corollary 9]{HeddenWatson} later proved that when $K$ is an L-space knot, $\Delta_K(t) = 1-t+\ldots+ t^{2g-1} + t^{2g}$, where $g$ is the Seifert genus of $K$. The results we summarized above provide evidence for the following:

\begin{conj} \label{conj:AnotherTerm}
Suppose $K$ is a hyperbolic L-space knot with Seifert genus $g$ and braid index $n$. Then $\Delta_K(t) = 1-t+t^n + \ldots + t^{2g-n-1}- t^{2g-1}+ t^{2g}$. 
\end{conj}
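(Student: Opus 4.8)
\emph{Towards \Cref{conj:AnotherTerm}.} The conjectured shape of $\Delta_K(t)$ is a splice of two ingredients: the Hedden--Watson normalization $\Delta_K(t) = 1 - t + \cdots + t^{2g-1} + t^{2g}$, which already pins down the two lowest-degree --- and, by the symmetry $\Delta_K(t) \doteq \Delta_K(t^{-1})$, the two highest-degree --- monomials of any L-space knot; and a \Cref{thm:AlexPoly}-type assertion that the \emph{third} exponent from the bottom is the braid index $n$. So the first step is to normalize $\Delta_K$ to a genuine polynomial with $\Delta_K(0) = 1$, $\deg \Delta_K = 2g$, and $t^{2g}\Delta_K(1/t) = \Delta_K(t)$, and to record, via Ozsv\'ath--Szab\'o and Hedden--Watson, that $\Delta_K(t) = \sum_{i=0}^{2m} (-1)^i t^{\alpha_i}$ with $2g = \alpha_0 > \alpha_1 = 2g-1 > \cdots > \alpha_{2m-1} = 1 > \alpha_{2m} = 0$. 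After this, the entire statement reduces to the single claim $\alpha_{2m-2} = n$ (equivalently, by the symmetry, $\alpha_2 = 2g-n$).

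The plan for that claim is to prove that a hyperbolic L-space knot is twist positive, and then invoke \Cref{thm:AlexPoly}: it gives $\Delta_K(t) = 1 - t + t^n + t^{n+1}R(t)$ with $n$ the braid index (by the Morton and Franks--Williams theorems), forcing $\alpha_{2m-2} = n$; feeding this back through the symmetry of the first step determines the trailing coefficients from the leading ones and yields the asserted form. That an L-space knot is fibered and strongly quasipositive is already known (Ghiggini, Ni, Hedden), so the real content is to extract, from a braid presentation on the minimal number $n$ of strands, first positivity of the braid and then a positive full twist $\Delta^2$; one natural handle is that a fibered knot of genus $g$ that closes up a positive $n$-braid has a forced exponent sum of $2g + n - 1$, which one can try to play against the exponent sum $n(n-1)$ needed for $\Delta^2$. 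A cheap consistency check is that for every family treated in \Cref{section:Lorenz} and \Cref{Discussion} --- algebraic knots, $1$-bridge braids, the relevant twisted torus knots, the pretzels $P(-2,3,q)$, and the Baker--Kegel knots --- twist positivity, hence the conjectured form of $\Delta_K$, is already in hand, and the hyperbolicity hypothesis merely sets aside the torus (and other non-hyperbolic) L-space knots, whose Alexander polynomials are computed directly.

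The main obstacle is precisely this twist-positivity step, and it is serious: braid positivity alone is open for L-space knots --- the Baker--Kegel family contains a knot that is provably \emph{not} braid positive --- and locating a full twist inside a positive braid is a strictly stronger demand; morally this is \Cref{LspaceIndex} again, since \Cref{thm:Lorenz} already derives the agreement of braid and bridge indices from twist positivity. I would keep two fallbacks in reserve. First, bypass twist positivity and attack $\alpha_{2m-2} = n$ through the HOMFLY polynomial: for a fibered, strongly quasipositive knot one hopes the Morton--Franks--Williams bound on the braid index is sharp, so that the spread of framing-variable exponents pins the braid index, which one then reads off against the $\{-1,0,1\}$ constraint on the coefficients of $\Delta_K$. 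Second, pursue the Lorenz route: all the twist-positive L-space knots we know are Lorenz knots, so one could try to show that every hyperbolic L-space knot is a positive Lorenz knot and then quote the Birman--Williams braid presentation. Either way, the step converting ``hyperbolic L-space knot'' into ``the braid index occurs as the third Alexander exponent'' is the crux, and as of now it is essentially the open problem.
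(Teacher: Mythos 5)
The statement you are addressing is \Cref{conj:AnotherTerm}, which the paper states as a \emph{conjecture}: there is no proof in the paper to compare against, only the evidence assembled in \Cref{section:Lorenz} and \Cref{Discussion}. Your write-up is consistent with that status --- you do not claim a proof, and your reduction is the correct one: by Ozsv\'ath--Szab\'o's alternation of coefficients, the Hedden--Watson normalization $1-t+\cdots+t^{2g-1}+t^{2g}$, and the symmetry $t^{2g}\Delta_K(1/t)=\Delta_K(t)$, the whole conjecture collapses to the single claim that the third-lowest exponent is the braid index $n$, which by \Cref{thm:AlexPoly} (together with Morton/Franks--Williams) would follow from twist positivity of hyperbolic L-space knots. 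That is exactly the mechanism behind the paper's evidence, since every family cited (algebraic knots, $1$-bridge braids, the relevant twisted torus knots, the $P(-2,3,q)$ pretzels, Baker--Kegel) is handled through twist positivity or a direct Alexander-polynomial computation. You are also right, and appropriately explicit, that the crux --- upgrading ``hyperbolic L-space knot'' to ``twist positive,'' or otherwise forcing the third exponent to be $n$ --- is open; even braid positivity is unknown in general (Baker--Kegel), and a full twist is strictly more. So the honest verdict is: correct reduction and correct identification of the obstruction, but no proof exists, in your proposal or in the paper.

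One small discrepancy worth flagging: your symmetry step produces top terms $+\,t^{2g-n}-t^{2g-1}+t^{2g}$, whereas the conjecture as printed shows $+\,t^{2g-n-1}-t^{2g-1}+t^{2g}$. If the bottom of $\Delta_K$ is $1-t+t^n$, symmetry forces the third-highest exponent to be $2g-n$, not $2g-n-1$; so either the printed exponent is a typo or the conjecture is asserting something not implied by your reduction, and you should state which version you are targeting. Your fallback routes (sharpness of the Morton--Franks--Williams bound for fibered strongly quasipositive knots, or showing hyperbolic L-space knots are Lorenz) are reasonable but are not developed in the paper and are themselves open; treat them as speculation, not as partial progress.
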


Hyperbolicity is a key assumption in \Cref{conj:AnotherTerm}: the conjecture is false even for the (2,3)-cable of $T(2,3)$, which is an L-space knot. In a different direction, we also predict:

\begin{conj} \label{PositiveIndex}
If $K$ is a positive braid knot, the braid index and bridge index of $K$ agree.
\end{conj}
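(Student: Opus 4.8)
Because the closure of an $n$-strand braid is automatically an $n$-bridge presentation of the resulting knot, the bridge index is always at most the braid index; so \Cref{PositiveIndex} amounts to the reverse inequality, that the bridge index of a positive braid knot is \emph{at least} its braid index. The plan is to peel off the easy pieces and then confront the remaining hyperbolic case with a genuine lower bound for the bridge index. First I would exploit that both invariants behave identically under connected sum: the bridge index satisfies $b(K_1 \# K_2) = b(K_1) + b(K_2) - 1$ by Schubert, the braid index satisfies the analogous identity by Birman--Menasco, and a composite positive braid knot decomposes as a connected sum of positive braid knots. Hence it suffices to prove the statement for \emph{prime} positive braid knots.

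Among prime positive braid knots, the torus knots are immediate: Schubert computed $b(T(p,q)) = \min(p,q)$, which is exactly the braid index of $T(p,q)$. The satellite positive braid knots are (iterated) cables, and here I would induct on the cabling operation: there are classical lower bounds for the bridge index of a satellite in terms of the bridge index of its companion, while the braid index of such a cable is likewise controlled by that of the companion, so \Cref{PositiveIndex} for the companion should propagate to the cable --- mirroring the cabling bullet point for \Cref{LspaceIndex} in \Cref{Discussion}. This leaves the hyperbolic prime positive braid knots, which is where the content lies.

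For the hyperbolic case I see two plausible routes. The first is via meridional rank: the meridional rank is always a lower bound for the bridge index, and the Cappell--Shaneson conjecture predicts equality, so it would suffice to show directly --- from the Wirtinger presentation of a positive braid closure --- that the knot group cannot be generated by fewer meridians than the braid index, for instance by producing a representation onto a suitable quotient (to $\mathrm{PSL}_2(\mathbb{C})$, to a Coxeter group, or to a symmetric group) under which any smaller set of meridional images fails to generate, in the spirit of recent meridional-rank computations for other knot families. The second route is Floer-theoretic: \Cref{thm:AlexPoly} already pins the exponent $n$ in $\Delta_K(t)$, and in the twist positive \emph{L-space} setting \Cref{thm:Lorenz} upgrades this to a statement about the bridge index; one would want a version of that implication that drops the L-space hypothesis, e.g.\ that for a fibered, strongly quasipositive knot the bridge index is at least the first ``nontrivial'' exponent of $\Delta_K$. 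A more hands-on alternative is to run Birman--Menasco braid-foliation arguments to rule out low-bridge presentations of a positive braid closure outright.

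The main obstacle is precisely this last step. There is no known general lower bound for the bridge index strong enough to match the braid index, and the L-space hypothesis in \Cref{thm:Lorenz} does real work --- it forces the knot Floer homology to be essentially determined by $\Delta_K$, which is what makes the bridge-index estimate go through. One also cannot simply quote the proof of \Cref{thm:Lorenz}: not every positive braid knot is twist positive on its minimal number of strands, so even the reduction to a braid containing a full twist is unavailable, and genuinely new input is required. My expectation is that the meridional-rank approach is the most promising for partial progress, but a complete proof along these lines would, in particular, settle the Cappell--Shaneson conjecture for positive braid knots, so it is likely to be hard.
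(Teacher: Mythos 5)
The statement you are addressing is stated in the paper as \Cref{PositiveIndex}, i.e.\ as a \emph{conjecture}: the paper offers no proof, only evidence (it holds for all positive braid knots in the knot tables) together with cautionary remarks — the fibered strongly quasipositive knot $10_{145}$ shows the statement fails just outside braid positivity, and Stoimenow's examples of positive braid knots with no positive representative on a minimal number of strands indicate that any reduction to a nice minimal-strand braid form is unavailable. So there is no paper proof to match yours against, and your write-up, as you yourself acknowledge, is a program rather than a proof.

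The genuine gap is exactly where you locate it, but it is worth being precise about why neither proposed route currently closes it. The reductions (additivity of both indices under connected sum, Schubert's $\min(p,q)$ for torus knots, and a cabling induction as in \Cref{lemma:ConjForCables}) are reasonable, though the claim that every satellite positive braid knot is an iterated cable of a positive braid knot itself needs an argument. For the hyperbolic prime case: the meridional-rank route would require showing that the group of a positive braid closure cannot be meridionally generated by fewer than $n$ elements, which is not known for this family and is at least as hard as the bridge-index statement itself (and, as you note, would settle Cappell--Shaneson cases along the way). The Floer route fails without the L-space hypothesis for a structural reason: the inequality $Ord_v(K) \leq br(K)-1$ of \Cref{TorsionOrder} only yields $br(K) \geq n$ if one can bound $Ord_v(K)$ below by $n-1$, and the identification of $Ord_v$ with the maximal gap in the exponents of $\Delta_K$ is \cite[Lemma 5.1]{JuhaszMillerZemke}, valid \emph{only} for L-space knots, whose knot Floer homology is determined by $\Delta_K$. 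For a general positive braid knot, \Cref{thm:AlexPoly} does not even apply (the knot need not be twist positive on its minimal braid axis), and even when it does, the gap in $\Delta_K$ gives no control on $Ord_v$. So the conjecture remains open, and your proposal correctly identifies, but does not supply, the missing lower bound for the bridge index.
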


\Cref{PositiveIndex} is true for all positive braid knots in the knot tables \cite{KnotInfo}, but quickly becomes false for even slightly broader classes of knots. For example, the fibered, strongly quasipositive knot $10_{145}$ is not braid positive, and it has braid index is four while the bridge index is three \cite{KnotInfo}. It is worth noting that there are positive braid knots which do not admit a positive braid representative on a minimal number of strands \cite{Stoimenow:PositiveKnotsJonesPoly}, indicating a subtlety to \Cref{PositiveIndex}.

\subsection{Conventions}
\begin{itemize}[leftmargin=*]
\item[$\circ$] We let $B_n$ denote the braid group on $n$ strands. 
\item[$\circ$] Unless stated otherwise, we assume that all braid closures are knots. 
\item[$\circ$] When drawn vertically (resp. horizontally), our braids are oriented from north to south (resp. from west to east).
\item[$\circ$] We write $\beta_1 \approx \beta_2$ when braids are related by conjugation. We write $\beta_1 = \beta_2$ if they are related by braid relations. We underline the subword of a braid that is being modified. 
\item[$\circ$] Throughout, we denote the braid (resp. bridge) index of a knot $K$ by $i(K)$ (resp. $br(K)$). 
\item[$\circ$] We always work within the smooth category, and we only study knots up to smooth concordance. 
\end{itemize}

\subsection{Organization} We prove \Cref{thm:AlexPoly}, \Cref{thm:Lorenz}, and \Cref{thm:DistinctConcordanceClasses} in \Cref{AlexPolyResult}, \Cref{section:Lorenz}, and \Cref{section:Concordance}, respectively. \Cref{Discussion} provides evidence for \Cref{PositiveIndex}.

\subsection{Acknowledgements} We thank Joan Birman, Mike Miller Eismeier, Peter Feller, and Allison N. Miller for helpful conversations, and John Baldwin, Josh Greene, Eli Grigsby, Kyle Hayden, Matt Hedden, Francesco Lin, and Marissa Loving for comments on a preliminary version of this work. We used \textit{Kirby Calculator} (``KLO'') and \textit{KnotInfo} during the experimental phases of this work; we express our deep gratitude to Frank Swenton, Chuck Livingston, and Allison Moore for maintaining these invaluable resources. Finally, we thank the referee for their careful reading of this paper. SK was supported by NSF DMS-2103325.

\section{Twist Positivity and the Alexander Polynomial} \label{AlexPolyResult}

This section establishes a relationship between twist positivity and the Alexander polynomial. 

\noindent \textbf{\Cref{thm:AlexPoly}.} \textit{If $K$ is twist positive, then $\Delta_K(t) = 1 - t + t^n +t^{n+1}R(t)$, where $R(t) \in \Z[t]$.}

The proof of \Cref{thm:AlexPoly} requires the \textit{reduced Burau matrix}, $B(t)$, which is associated to a braid $\beta \in B_n$. We quickly recall the relevant details here, and recommend \cite{Birman:Book} for a full account. 

Let $\beta$ be a braid on $n$ strands. We write $\displaystyle \beta =\prod_{r=1}^m \sigma_{i_r}^{\epsilon_{i_r}}$, i.e. as the product of Artin generators.

To $\beta$, we can associate its reduced Burau matrix, $B(t)$, which is the product of $(n-1)\times (n-1)$ matrices $\sigma_i(t)$, where $\sigma_i(t)$ differs from the $(n-1)\times (n-1)$ identity matrix $I_{n-1}$ only in row $i$, as in \Cref{fig:Burau}. (Note that this matrix is truncated appropriately when $i=1,n-1$.) In particular, $\displaystyle B(t)=\prod_{r=1}^m \sigma_{i_r}^{\epsilon_{i_r}}(t)$.

\vspace{-1em}
\begin{figure}[h]
\centering
  $\sigma_i(t) =  \left( \begin{tabular}{ccccccc}
                    1&&&&&&\\ 
                    &$\ddots$&&&&&\\
                    &&1&&&&\\ 
                    &&$t$&$-t$&1&&\\ 
                    &&&&1&&\\ 
                    &&&&&$\ddots$&\\
                    &&&&&&1
                \end{tabular} \right)$
    \caption{The template for the matrices $\sigma_i(t)$, the building blocks of the matrix $B(t)$.}
    \label{fig:Burau}
\end{figure}

Suppose $\widehat{\beta}=K$, a knot. The Alexander polynomial for $K$, denoted $\Delta_K(t)$, is derived from $B(t)$ as follows: 
\[\frac{\det(I_{n-1}-B(t))}{1-t^n}=\frac{\Delta_K(t)}{1-t}\] 
In particular, $$\Delta_K(t)=(1-t)\cdot(\det(I_{n-1}-B(t)))\cdot(1+t^n+t^{2n}+\cdots).$$

We collect some important properties of $B(t)$:
\begin{itemize}
\item In general, the entries in a reduced Burau matrix lie in $\Z[t^{\pm 1}]$. However, when $\beta$ is a positive braid, the entries of $B(t)$ are genuine polynomial entries.
\item Recall: the full twist on $n$ strands is denoted by $\Delta^2$, where $\Delta^2 = (\sigma_1 \ldots \sigma_{n-1})^n \approx (\sigma_{n-1} \ldots \sigma_1)^n$. When $\beta = \Delta^2$, we have $B(t) = t^n I_{n-1}$. In particular, if $\beta$ is a twist positive braid, then $\beta = \Delta^2 \gamma$, so the reduced Burau matrix associated to $\beta$ is $B(t) = (t^n I_{n-1})(C(t)) = t^nC(t)$, where $C(t)$ is the reduced Burau matrix associated with $\gamma$.
\item Suppose $\beta$ is a positive braid on $n$ strands. We define the polynomial $q_{B(t)}(x)$ as follows:
\begin{align} 
q_{B(t)}(x):=\det(I_{n-1} - xB(t)) = 1+a_1(t)x+a_2(t)x^2 + \ldots + a_{n-1}(t)x^{n-1} \label{eqn:Burau}
\end{align}
We claim that the values of $a_1(t)$ and $a_{n-1}(t)$ can be quickly determined from $B(t)$: if we let $p_{B(t)}(x):=\det(xI_{n-1} - B(t))$ denote the characteristic polynomial of $B(t)$, then $q_{B(t)}(x)=x^{n-1}p_{B(t)}(x^{-1})$. That is, $q_{B(t)}(x)$ is obtained by $p_{B(t)}(x)$ by exchanging the coefficients of $x^i$ and $x^{(n-1)-i}$ for each $i \leq \lfloor (n-1)/2 \rfloor$. In particular, in $q_{B(t)}(t)$, $a_{n-1}(t) = (-1)^{n-1}\det(B(t))$ and $a_1(t)=-\tr(B(t))$.
\end{itemize}

We observe that substituting $x=t^n$ into \Cref{eqn:Burau} yields:
\begin{align}
\det(I_{n-1} - t^nB(t)) &= 1+a_1(t)t^n+a_2(t)t^{2n} + \ldots + t^{n^2-n}a_{n-1}(t) \nonumber \\
&= 1+a_1(t)t^n+t^{n+1} R(t) \label{eqn:PolyFullTwist}
\end{align}
where $R(t) \in \Z[t]$ collects all the higher order terms. 

With these preliminaries established, we can prove the result. 

\begin{proof}[Proof of \Cref{thm:AlexPoly}.]
Suppose $K = \widehat{\beta}$, where $\beta$ is a twist positive braid on $n$ strands. Then $\beta = \Delta^2 \gamma$, $B(t) = t^nC(t)$, and by applying the observation of \Cref{eqn:PolyFullTwist}, 
$$\det(I_{n-1} - B(t)) = \det(I_{n-1} - t^n C(t)) = 1+a_1(t)t^n+t^{n+1} R(t)$$ with $R(t) \in \Z[t]$. In particular, this means that
\begin{align}
\Delta_K(t)&=(1-t)\cdot\left(1+a_1(t)t^n+t^{n+1} R(t)\right)\cdot(1+t^n+t^{2n} + \ldots) \label{eqn:Setup} \\
&= 1-t+(1+a_1(t))t^n + t^{n+1}S(t) \label{eqn:AlexPoly}
\end{align}
where \Cref{eqn:AlexPoly} is obtained by distributing and rearranging the terms in the righthand side of \Cref{eqn:Setup}. Here, $S(t) \in \Z[t]$. 

\begin{claim} \label{Claim1}
Suppose $\widehat{\beta}$ is a knot $K$. Then the polynomial $a_1(t)$ has no constant term, i.e. $a_1(0)=0$. 
\end{claim}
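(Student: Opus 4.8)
The plan is to compute $a_1(t)$ directly from the formula $a_1(t) = -\tr(B(t))$ established just above, and to track what happens to the trace of the reduced Burau matrix when $t=0$. Since $\beta$ is a positive braid, $B(t)$ is a genuine matrix over $\Z[t]$, so it makes sense to evaluate at $t=0$; what I want to show is that $\tr(B(0)) = 0$. The key observation is that setting $t=0$ in the building block $\sigma_i(t)$ of \Cref{fig:Burau} produces a matrix $\sigma_i(0)$ which agrees with $I_{n-1}$ except that row $i$ has a single nonzero off-diagonal entry (the $1$ in column $i+1$) and a zero on the diagonal — in particular $\sigma_i(0)$ is a lower-triangular-ish matrix with the $i$-th diagonal entry equal to $0$ and all other diagonal entries equal to $1$. (The truncations at $i=1$ and $i=n-1$ only remove some of the off-diagonal entries; the diagonal pattern is unchanged.)

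First I would record that $B(0) = \prod_{r=1}^m \sigma_{i_r}(0)$, a product of such matrices. The next step is to understand the diagonal of a product of matrices of this shape. Each $\sigma_i(0)$ can be written as $I_{n-1} - E_{ii} + E_{i,i+1}$ (with the $E_{i,i+1}$ term absent when $i = n-1$), i.e. it is a "partial permutation"-type matrix: it is the identity on coordinates $\neq i$ and sends $e_i \mapsto 0$ while $e_{i+1}$ still maps into the span of $e_i, e_{i+1}$. I would argue that the product $B(0)$, read as a linear map, is strictly "upper triangular modulo a permutation" in a way that forces every diagonal entry to vanish — more precisely, that $B(0)$ has all diagonal entries equal to $0$. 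The cleanest route is probably a direct combinatorial/inductive argument: show that if $M$ is any product of the $\sigma_i(0)$'s then the $(j,j)$ entry of $M\sigma_i(0)$ and of $\sigma_i(0)M$ can be controlled, or alternatively connect $B(0)$ to the (unreduced vs. reduced) Burau representation at $t=0$, where the unreduced Burau matrix at $t=0$ is exactly the permutation matrix of $\beta$ and, because $\widehat\beta$ is a \emph{knot}, that permutation is an $n$-cycle with no fixed points — so its permutation matrix has zero trace, and one then passes to the reduced representation.

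Concretely, the argument I would write: the unreduced Burau matrix of $\beta$ evaluated at $t=0$ is the permutation matrix $P_\pi$ of the underlying permutation $\pi \in S_n$ of the braid (each $\sigma_i(0)$ in the unreduced picture is the transposition matrix of $(i,i+1)$). Since $\widehat\beta$ is a knot, $\pi$ is an $n$-cycle, hence fixed-point-free, hence $\tr(P_\pi) = 0$. The unreduced Burau representation splits as the reduced representation plus a trivial $1$-dimensional summand on which every braid acts by $1$; evaluating at $t=0$ preserves this splitting, so $\tr(P_\pi) = \tr(B(0)) + 1$, giving $\tr(B(0)) = -1$. Then $a_1(0) = -\tr(B(0)) = 1$, and hence $1 + a_1(0) = 0$ — wait, I should double-check the normalization: since \Cref{eqn:AlexPoly} has leading coefficient of $t^n$ equal to $1+a_1(t)$ and the theorem claims it is $+1$, the correct statement to prove is $a_1(0) = 0$. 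So I would instead track the trivial summand carefully (the decomposition of the unreduced Burau representation; at $t=1$ the trivial summand is the all-ones vector, but the splitting as reduced $\oplus$ trivial holds over $\Z[t^{\pm1}]$ with the trivial summand spanned by a $t$-dependent vector), recompute, and conclude $\tr(B(0)) = 0$, hence $a_1(0) = 0$.

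The main obstacle is getting the bookkeeping of the reduced-versus-unreduced Burau representation exactly right at $t = 0$: one must be careful that the "trivial summand" on which braids act trivially is the correct complement, and that evaluating at $t=0$ doesn't degenerate the splitting. The safest implementation, and the one I would actually carry out, avoids representation theory entirely: observe directly from \Cref{fig:Burau} that $\sigma_i(0)$ is the identity outside row $i$ and has $(i,i)$-entry $0$, prove by induction on word length that any product of such matrices has the property that its $(j,j)$ entry equals $0$ whenever the generator $\sigma_{j}$ or $\sigma_{j-1}$ appears (so that strand $j$ is genuinely "moved"), and then use that $\widehat\beta$ being a knot means the permutation is transitive so \emph{every} strand is moved — forcing every diagonal entry, and hence the trace, to be $0$. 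This yields $a_1(0) = -\tr(B(0)) = 0$, which is the claim.
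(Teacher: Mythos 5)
Your final, ``safest'' implementation is the same strategy the paper itself uses --- evaluate at $t=0$, analyze the diagonal of the product of the matrices $\sigma_{i}(0)$, and use the knot hypothesis to force every diagonal entry to vanish --- but the inductive invariant you propose to prove is false as stated, so the induction you plan would not go through. You claim that the $(j,j)$ entry of a product of $\sigma_i(0)$'s vanishes whenever $\sigma_j$ \emph{or} $\sigma_{j-1}$ appears in the word. A one-letter counterexample: $\sigma_{j-1}(0)$ differs from the identity only in row $j-1$, so its $(j,j)$ entry is $1$; likewise any word using only $\sigma_{j-1}$ has $(j,j)$ entry $1$. The correct (and sufficient) statement is that the $(j,j)$ entry vanishes whenever $\sigma_j$ itself appears, and it needs no bespoke induction: at $t=0$ the subdiagonal entry $t$ at position $(i,i-1)$ dies, so each $\sigma_i(0)$ is upper triangular with $[\sigma_i(0)]_{jj}=0$ exactly when $j=i$ and $1$ otherwise (your own formula $\sigma_i(0)=I_{n-1}-E_{ii}+E_{i,i+1}$ says this; it is upper, not lower, triangular). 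Hence $B(0)$ is upper triangular and $[B(0)]_{jj}=\prod_r [\sigma_{i_r}(0)]_{jj}$, which is $0$ as soon as the index $j$ occurs among the $i_r$. Since $\widehat{\beta}$ is a knot, its underlying permutation is an $n$-cycle, so every generator index $1\le j\le n-1$ must occur (otherwise strands $\{1,\dots,j\}$ and $\{j+1,\dots,n\}$ never interact and the closure is disconnected); thus every diagonal entry of $B(0)$ is $0$, $\tr(B(0))=0$, and $a_1(0)=-\tr(B(0))=0$. With that repair your argument coincides with the paper's proof.

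Separately, the representation-theoretic detour you (rightly) abandon contains a genuine error rather than a mere normalization issue: the unreduced Burau matrices specialize to permutation matrices at $t=1$, not at $t=0$; at $t=0$ each unreduced generator is singular, so $B(0)$ is not governed by the permutation matrix of $\beta$ in the way you describe, and the ``reduced $\oplus$ trivial'' bookkeeping cannot rescue it. Dropping that route entirely and stating the diagonal invariant correctly, as above, gives a complete proof.
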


\begin{proof}[Proof of \Cref{Claim1}.]
In the paragraph following \Cref{eqn:Burau}, we argued that $a_1(t) = -\tr(B(t))$. Therefore, to see that $a_1(0) = 0$, we must show that $\tr(B(0))=0$.

This is straightforward: first, we note that for all $i$, $\sigma_i(t)$ is an upper triangular matrix. Since the product of upper triangular matrices is also upper triangular, the matrix $B(t)$ is also an upper triangular matrix. Moreover, the diagonal entries of $B(t)$ are explicitly determined by the diagonal entries of its constituent matrices; more precisely,
\begin{align*}
\displaystyle [B(t)]_{ii}=\prod_{r=1}^m \left[\sigma_{i_r}^{\epsilon_{i_r}}(t)\right]_{ii}
\end{align*}
where $[ M ]_{ij}$ denotes the $(i,j)$ entry of the matrix $M$.

Since $\widehat{\beta}$ is a knot, then for all $1 \leq i \leq n-1$, $\sigma_i$ appears in $\beta$ at least once. Moreover, by the definition of the matrix $\sigma_i(t)$, $\left[\sigma_{i}(0)\right]_{ii} = 0$. Therefore, for all $1 \leq i \leq n-1$, $[B(t)]_{ii} = 0$. We deduce that $\tr(B(t)) = 0$, as desired.
\end{proof}

With this claim in hand, we can finish proving \Cref{thm:AlexPoly}. In particular, since $a_1(0)=0$, then $a_1(t) = e_1t + e_2t^2 + \ldots + e_kt^k$. Plugging this into \Cref{eqn:AlexPoly}, we have
\begin{align*}
\Delta_K(t) &= 1-t+(1+a_1(t))t^n + t^{n+1}S(t) \\
&= 1-t+(1 + (e_1t + e_2t + \ldots + e_kt^k))t^n + t^{n+1}S(t) \\
&= 1-t+t^n + t^{n+1}V(t)
\end{align*}
for some $V(t) \in \Z[t]$. This is exactly what we wanted to show.
\end{proof}

\begin{cor}
If $K$ is twist positive on $n$ strands, the braid index of $K$ appears as an exponent of the Alexander polynomial. 
\end{cor}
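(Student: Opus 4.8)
The plan is to package \Cref{thm:AlexPoly} together with the Morton--Franks--Williams braid index theorem recalled in the introduction. First I would invoke \Cref{thm:AlexPoly}: since $K$ is twist positive on $n$ strands, $\Delta_K(t) = 1 - t + t^n + t^{n+1}R(t)$ for some $R(t) \in \Z[t]$. Next I would verify that $t^n$ genuinely appears, i.e. that its coefficient is nonzero. A knot presented as a twist positive braid necessarily lives on $n \geq 2$ strands (a full twist on a single strand is trivial, so its closure is the unknot), so the monomials $1$ and $-t$ contributed by the first two summands cannot interact with degree $n$; and since $R(t) \in \Z[t]$, every monomial of $t^{n+1}R(t)$ has degree at least $n+1$, so it cannot interact with degree $n$ either. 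Hence the coefficient of $t^n$ in $\Delta_K(t)$ equals $1$, and $n$ is an exponent of $\Delta_K(t)$. Finally I would apply the result of Morton and Franks--Williams \cite{Morton:KnotPolys,FranksWilliams}, quoted just before \Cref{thm:AlexPoly}: twist positivity on $n$ strands forces the braid index of $K$ to be exactly $n$. Combining the two facts, the braid index of $K$ appears as an exponent of $\Delta_K(t)$.

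I do not expect any substantive obstacle: the corollary is a direct consequence of \Cref{thm:AlexPoly} and the already-cited computation of the braid index. The only step needing a sentence of care is ruling out accidental cancellation of the $t^n$ term, which is settled by the degree bookkeeping above once one notes $n \geq 2$.
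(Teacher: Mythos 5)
Your proof is correct and follows essentially the same route as the paper: apply \Cref{thm:AlexPoly} to get the form $1-t+t^n+t^{n+1}R(t)$ and the Morton--Franks--Williams theorem to identify $n$ as the braid index. The extra bookkeeping ruling out cancellation of the $t^n$ term is a fine (if implicit in the theorem's stated form) addition, but does not change the argument.
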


\begin{proof}
If $K$ is twist positive on $n$ strands, then by \cite{FranksWilliams}, the braid index of $K$ is $n$. On the other hand, \Cref{thm:AlexPoly} proves that $n$ appears as the exponent of the third term in $\Delta_K(t)$. 
\end{proof}

\section{Proof of \Cref{thm:Lorenz}} \label{section:Lorenz}

First, we establish that some well-studied families of knots are twist positive:

\begin{prop} \label{ExamplesOfLorenzKnots}
Algebraic knots, 1-bridge braids, and many twisted torus knots are twist positive L-space knots. In fact, something stronger is true: these knots are all L-space Lorenz knots.
\end{prop}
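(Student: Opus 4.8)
The plan is to prove \Cref{ExamplesOfLorenzKnots} by exhibiting, for each listed family, an explicit positive braid representative of the form $\Delta^2 \gamma$ (with $\gamma$ a positive braid word), and separately verifying that each family is known to consist of L-space knots and of Lorenz knots. The L-space property is already in the literature for all three families---algebraic knots are iterated torus knots arising from singularities, which are L-space knots by Hedden's cabling results (or \cite{OSz:LensSpaceSurgeries}); 1-bridge braids are L-space knots by \cite{GLV:11LSpace}; and the relevant twisted torus knots are L-space knots by \cite{Vafaee:TwistedTorusKnots}---so the real work is the uniform twist-positivity statement together with the identification of each as a Lorenz knot.

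First I would handle torus knots and 1-bridge braids as the base cases, since algebraic knots are built from them by cabling. The positive torus knot $T(p,q)$ with $p < q$ is the closure of $(\sigma_1\cdots\sigma_{p-1})^q$; writing $q = p + r$ with $r \geq 0$ exhibits this as $\Delta^2 (\sigma_1\cdots\sigma_{p-1})^r$, so $T(p,q)$ is twist positive on $p$ strands. For 1-bridge braids, I would recall Gabai's normal form: a 1-bridge braid is the closure of a braid of the form $(\sigma_1\sigma_2\cdots\sigma_{w-1})^t \sigma_{w-1}\sigma_{w-2}\cdots\sigma_{w-b}$ on $w$ strands (in the parametrization of \cite{Gabai:1BridgeBraids}, with appropriate ranges on $t, b$), and the L-space ones are precisely those identified by Greene--Licata--Vafaee; when $t \geq w$ one pulls out $\Delta^2$ directly, and for the remaining L-space cases one uses that the Greene--Licata--Vafaee constraints force $t$ large enough (or a short Markov/braid-relation manipulation puts the word in twist-positive form). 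Both families are classically Lorenz knots: torus knots are Lorenz (Birman--Williams), and 1-bridge braids sit inside the Lorenz family as well, which I would cite from \cite{BirmanKofman} or \cite{Birman:Lorenz}.

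Next I would treat algebraic knots. An algebraic knot is an iterated torus knot $K = T(p_1,q_1)_{p_2,q_2;\ldots}$ with the cabling parameters satisfying the positivity inequalities coming from a plane-curve singularity. The key structural input is that a positive cable of a twist-positive braid closure is again twist positive: if $K = \widehat{\beta}$ with $\beta = \Delta_n^2\gamma$ on $n$ strands, then the $(p,q)$-cable with $q$ large (which is the regime forced by the algebraic/L-space condition, via the Hedden--Hom bound $q \geq p(2g(K)-1)$ quoted in the introduction) is the closure of a braid on $np$ strands built by replacing each strand by $p$ parallel strands and inserting the cabling twists; the cabled full twist $\Delta_n^2$ becomes a positive braid containing $\Delta_{np}^2$, and the extra cabling twist $q \geq p(2g-1) \geq p$ contributes at least one more full twist on the $p$-strand level. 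I would make this precise by an induction on the number of cabling steps, with the torus knot base case above. That iterated torus knots with these parameters are Lorenz knots follows from Birman--Williams/Birman--Kofman, which I would cite.

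Finally, for twisted torus knots, I would restrict to the subfamily $T(p,q;2,s)$ (or the family treated in \cite{Vafaee:TwistedTorusKnots}) that is known to give L-space knots, and note that its standard positive braid word $(\sigma_1\cdots\sigma_{p-1})^q \sigma_1^{2s}$ or the relevant analogue contains $\Delta_p^2$ as soon as $q \geq p$, which holds in the L-space range; their identification as Lorenz knots is due to \cite{BirmanKofman} (twisted torus knots of the appropriate shape are Lorenz). The main obstacle I anticipate is the cabling step for algebraic knots: writing down a clean positive braid word for an iterated cable and pinpointing exactly where the full twist $\Delta_{np}^2$ appears requires care, and I would lean on the Hedden--Hom inequality $q \geq p(2g(K)-1)$ to guarantee enough twisting at each stage rather than attempting an ad hoc braid manipulation. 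The other family-by-family verifications are essentially bookkeeping against the cited classifications.
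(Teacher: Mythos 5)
Your plan diverges from the paper's in a way that matters: the paper never hunts for an explicit full twist in a given braid word. It invokes Birman--Williams (every Lorenz link is twist positive) together with the Birman--Kofman correspondence between Lorenz links and T-links, and then only has to cite that each family consists of T-links (Birman--Williams' Theorem 6.3 for algebraic knots, and \cite{REU2021} for 1-bridge braids and positive twisted torus knots), plus the same L-space citations you use. Your route fails exactly where that machinery is doing the work. First, for 1-bridge braids: the family $(\sigma_b\cdots\sigma_1)(\sigma_{w-1}\cdots\sigma_1)^t$ includes members with $t<w$, and \cite{GLV:11LSpace} proves \emph{all} of them are L-space knots --- there is no constraint forcing $t\geq w$, so your fallback claim is false, and for $t<w$ the $w$-strand word contains no full twist on $w$ strands. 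The twist-positive presentation of such a knot lives on a \emph{different} number of strands and is obtained from the T-link/Lorenz duality; that is the content of the cited result of \cite{REU2021}, not a ``short Markov/braid-relation manipulation.'' Second, for algebraic knots: your key structural claim that the $p$-cabling of $\Delta_n^2$ ``becomes a positive braid containing $\Delta_{np}^2$'' is false. Cabling the full twist gives only the full twist of the $n$ groups of $p$ strands about each other, and $\Delta_{np}^2$ equals that braid \emph{times} the internal full twists on each of the $n$ groups (compare exponent sums: $np(np-1)=n(n-1)p^2+np(p-1)$). The extra twisting guaranteed by $q\geq p(2g(K)-1)$ is inserted in a single group of the closed braid, so your induction does not exhibit $\Delta_{np}^2$ without a further (nontrivial) redistribution argument; the paper avoids this entirely by citing that algebraic knots are Lorenz.

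Two smaller points in the same vein: the statement that 1-bridge braids ``sit inside the Lorenz family'' is not in \cite{BirmanKofman}; it is precisely the T-link presentation result of \cite{REU2021}, so you are assuming what needs to be proved. And for Vafaee's twisted torus knots, ``$q\geq p$ holds in the L-space range'' is not automatic (the family includes knots built on $T(p,kp-1)$ with $k=1$), so even there the naive extraction of $\Delta_p^2$ from the standard word can fail and one again wants the T-link viewpoint. Your torus-knot computation and the large-twist cases are fine, but the remaining cases are the substance of \Cref{ExamplesOfLorenzKnots}, not bookkeeping; as written the proposal has genuine gaps there.
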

 
\begin{proof}
Birman-Williams proved that every Lorenz knot is twist positive \cite{BirmanWilliams}. Moreover, Birman-Kofman \cite{BirmanKofman} showed that Lorenz knots are in correspondence with \textit{T-links}, which are defined to be the closures of positive braids of a particular form:
\begin{align} \label{TLink}
\tau = (\sigma_1 \sigma_2 \ldots \sigma_{p_1-1})^{q_1} (\sigma_1 \sigma_2 \ldots \sigma_{p_2-1})^{q_2} \ldots (\sigma_1 \sigma_2 \ldots \sigma_{p_s-1})^{q_s}
\end{align}

where $2 \leq p_1 \leq p_2 \leq \ldots \leq p_s$, $0 < q_i$ for all $i$, and $\tau$ is a braid in $B_{p_s}$. Thus, to prove \Cref{ExamplesOfLorenzKnots}, it suffices to show that algebraic knots, 1-bridge braids, and twisted torus knots are $T$-links, and also confirm their L-space status. 

\begin{itemize}
\item Birman-Williams showed that algebraic knots are Lorenz knots \cite[Theorem 6.3]{BirmanWilliams}. It is well known that algebraic knots are particular iterated cables of torus knots \cite{Brauner}; applying Hedden's results on cabling \cite{Hedden:CablingII}, we deduce they are L-space knots.
\item A \textit{1-bridge braid} is a knot $K \subset S^3$ realized as the closure of a braid $\beta$, where:
$$\beta = (\sigma_b \sigma_{b-1}\ldots \sigma_2 \sigma_1) (\sigma_{w-1}\sigma_{w-2}\ldots \sigma_2 \sigma_1)^t$$
\cite{REU2021} proved that all 1-bridge braids admit $T$-link presentations (hence they are twist positive), and \cite{GLV:11LSpace} proved they are L-space knots.
\item In \cite{Vafaee:TwistedTorusKnots}, Vafaee presents twisted torus knots as the closures of positive braids on $w$ strands with the following form: $\rho = (\sigma_{w-1} \sigma_{w-2} \ldots \sigma_1)^t (\sigma_{w-1} \sigma_{w-2} \ldots \sigma_{w-k})^{sk}$, i.e. they are built from a torus link by inserting some full twists into $k$ adjacent strands. \cite{REU2021} showed that all twisted torus knots admit $T$-link presentations, so they are twist positive;  \cite{Vafaee:TwistedTorusKnots} proved that for certain parameters, they are L-space knots.
\end{itemize}
\end{proof}

\subsection{Twist positive L-space knots} We begin by proving \Cref{thm:Lorenz}.  

\vspace{1em}

\textbf{\Cref{thm:Lorenz}.} \textit{
If $K$ is a twist positive L-space knot, then its braid index and bridge index agree. Moreover, $K$ realizes the minimal braid and bridge indices of its concordance class.} 

\begin{proof}
If $K$ is a twist positive L-space knot, then by \Cref{thm:AlexPoly}, we have
$$\Delta_K(t) = 1 - t + t^n + t^{n+1}R(t)$$ By \cite{FranksWilliams}, we know the braid index of $K$ is $n$. Moreover, we see that $n-1$ appears as a gap in the exponents of $\Delta_K(t)$. Juh\'asz-Miller-Zemke \cite{JuhaszMillerZemke} established a relationship between knot Floer homology and the bridge index, which we now describe. We recall: the \textit{minus version} of knot Floer homology, denoted $HFK^-(K)$, is a knot invariant, and has the structure of a finitely generated module over $\mathbb{F}_2[v]$. Moreover, there is a (non-canonical) decomposition of $HFK^{-}(K)$ as follows: $ HFK^{-}(K) \cong \mathbb{F}_2[v] \oplus HFK^{-}_{red}(K)$, where $HFK^{-}_{red}(K)$ is the $\mathbb{F}_2[v]$-torsion submodule of $HFK^{-}(K)$. Juh\'asz-Miller-Zemke defined the knot invariant $Ord_v(K)$ (i.e the \textit{torsion order} of $K$) to be the minimal integer which annihilates this torsion submodule; for more details, we refer the reader to \cite[Section 3]{JuhaszMillerZemke}.

\begin{thm} \cite[Corollary 1.9]{JuhaszMillerZemke} \label{TorsionOrder}
If $K$ is a knot in $S^3$, $Ord_v(K) \leq br(K)-1$ $($where $Ord_v(K)$ denotes the torsion order of $K$, an invariant from knot Floer homology$)$. \hfill $\Box$
\end{thm}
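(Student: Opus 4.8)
The statement is a theorem of Juh\'asz--Miller--Zemke, and the plan is to bound the torsion order $Ord_v(K)$---the smallest $n\ge 0$ with $v^n\cdot\mathrm{Tors}_{\mathbb{F}[v]}\bigl(\mathit{HFK}^-(K)\bigr)=0$---by exhibiting a cobordism from $K$ to the unknot assembled out of $br(K)-1$ elementary pieces, and then feeding this through the functoriality of knot Floer homology under decorated cobordisms. First I would isolate the purely algebraic mechanism as a transfer lemma: if $F\colon \mathit{HFK}^-(K)\to \mathit{HFK}^-(K')$ and $G\colon \mathit{HFK}^-(K')\to \mathit{HFK}^-(K)$ are $\mathbb{F}[v]$-equivariant maps with $G\circ F\simeq v^{\ell}$ and $F\circ G\simeq v^{\ell}$, then $|Ord_v(K)-Ord_v(K')|\le \ell$; the proof is a short diagram chase showing that $v^{Ord_v(K')+\ell}$ annihilates the $v$-torsion submodule of $\mathit{HFK}^-(K)$.

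Next I would recall that a single band (saddle) move $K\rightsquigarrow K'$ is realized by an elementary decorated cobordism in $S^3\times[0,1]$, so Zemke's functoriality supplies maps on $\mathit{HFK}^-$ in both directions; gluing the saddle to its reverse produces a cobordism carrying a trivial handle, and the neck-cutting (tube) relation identifies the resulting self-composite with multiplication by $v$. Consequently, along any finite sequence of knots/links $K=K_0,K_1,\dots,K_m$ in which consecutive terms differ by one band move, iterating the transfer lemma gives $Ord_v(K)\le Ord_v(K_m)+m$. Here one must allow intermediate links with several components, working throughout with link Floer homology, and use that the torsion order of an unlink is $0$; the grading and decoration bookkeeping that pins down the composition-equals-$v$ relation is one of the two delicate points.

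The geometric input is then: starting from a $br(K)$-bridge presentation, with its upper and lower trivial $br(K)$-strand tangles meeting the bridge sphere in $2\,br(K)$ points, one amalgamates the bridges one at a time---each step a band move supported near the bridge sphere which, after accounting for the component that may split off or merge back (tracked through the transfer lemma), lowers the bridge number---until $K$ is carried to a $1$-bridge link, i.e. an unlink. This produces a band-move sequence of length $br(K)-1$ connecting $K$ to an object with $Ord_v=0$, and combining with the previous paragraph yields $Ord_v(K)\le br(K)-1$. The main obstacle is precisely this last step: converting an abstract bridge presentation into an honest sequence of saddle moves of the claimed length, with enough control over orientations and cobordism decorations that Zemke's maps are defined and each reverse-composite is exactly $v$ rather than $v$ times extraneous non-torsion terms. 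This, together with the composition relation of the second step, is where the substance of the Juh\'asz--Miller--Zemke argument lies.
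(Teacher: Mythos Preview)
The paper does not prove this statement at all: it is quoted verbatim from Juh\'asz--Miller--Zemke with the terminal $\Box$ indicating that it is used as a black box, so there is no ``paper's own proof'' to compare against. Your sketch is a reasonable summary of the mechanism underlying the original JMZ argument---the functoriality of $\mathit{HFK}^-$ under decorated saddle cobordisms, the composite-equals-$v$ relation, and the reduction of a $br(K)$-bridge presentation to an unlink via $br(K)-1$ bands---but for the purposes of this paper no proof was expected or required.
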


Let $\alpha_0, \ldots, \alpha_{2m}$ be the non-zero exponents appearing in $\Delta_K(t)$, written in decreasing order. Juh\'asz-Miller-Zemke showed that for L-space knots, $Ord_v(K) = \max \{\alpha_{i-1} - \alpha_i | 1 \leq i \leq 2m \}$  \cite[Lemma 5.1]{JuhaszMillerZemke}. Since $n-1$ appears as a gap in the exponents of $\Delta_K(t)$, we have $n-1 \leq Ord_v(K)$. The bridge index, $br(K)$, is a lower bound on the braid index, $i(K)$, so:
\begin{align}
n-1 &\leq Ord_v(K) \leq br(K)-1 \leq i(K)-1 = n-1 \label{eqn:Inequalities}
\end{align}

But this means we must have equalities throughout, hence $br(K) = n = i(K)$, as desired.

Next, we prove that twist positive L-space knots realize the minimal braid index of their concordance classes.

The proof of this is nearly identical to the proof of \cite[Corollary 1.10]{JuhaszMillerZemke}. Suppose the knot $J$ is concordant to a twist positive L-space knot $K$, and let $N(K)$ denote the concordance invariant defined by Dai-Hom-Truong-Stoffregen \cite{DHST:ConcordanceHomomorphisms}. We have the following string of inequalities (justification for the respective (in)equalities are in the paragraph following):
\begin{align}
i(J) \geq br(J) &\geq Ord_v(J)+1 \\
& \geq N(J) + 1 \\
&= N(K) + 1 \\
&= Ord_v(K) + 1 \\
&= br(K) = i(K)
\end{align}

The first equality of (3.3) is standard; the second is a restatement of \Cref{TorsionOrder}. Dai-Hom-Truong-Stoffregen \cite{DHST:ConcordanceHomomorphisms} showed that $N(K)$ is a lower bound on $Ord_v(K)$, yielding the inequality in (3.4). Since $N(K)$ is a concordance invariant, we have the equality in (3.5). \cite{JuhaszMillerZemke} showed that for L-space knots, the invariants $N(\star)$ and $Ord_v(\star)$ agree; this yields the equality in (3.6). The equalities of (3.7) follow from those in (\ref{eqn:Inequalities}).
\end{proof}

\section{Proof of \Cref{thm:DistinctConcordanceClasses}} \label{section:Concordance}

In this section, we prove that L-space knots of braid index three lie in distinct concordance classes, and these classes are distinct from those of torus knots. 

\subsection{Preliminaries} We begin with some preliminaries.

\begin{defn} \label{defn:TTK}
$T(3, k; 2m)$ denotes the twisted torus knot which is the closure of $(\sigma_2 \sigma_1)^k (\sigma_2)^{2m}$, where $k \geq 4$, $k \not \equiv 0 \mod 3$, and $m \geq 0$. 
\end{defn}

We briefly justify why we may assume that $k$ is at least 4: if $k=1, 2$, then $T(3,k; 2m)$ is isotopic to a $T(2,r)$ torus knot. Additionally, for all $m \geq 0$ and $k \equiv 0 \mod 3$, $T(3,k;2m)$ is a link. Thus, we may assume $k \geq 4$. We note that when $k \geq 4$, $T(3,k; 2m)$ is twist positive on three strands.

\begin{lemma} \label{lemma:hyperbolic}
Every L-space knot of braid index three is either a torus knot or a hyperbolic knot.
\end{lemma}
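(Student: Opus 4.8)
The plan is to use the classification of knots of braid index three that admit exceptional (i.e.\ non-hyperbolic, non-torus) surgeries, combined with the structure of L-space knots. First I would recall that an L-space knot is fibered with irreducible complement, so in particular it is a prime knot, and (being an L-space knot) it is not a satellite knot with torus-knot companion unless it is itself a cable — more precisely, if an L-space knot of braid index three were a satellite, the only satellite L-space knots are cables $K_{p,q}$ with $K$ an L-space knot, and a cable of a nontrivial knot has braid index at least $2 \cdot 2 = 4$ (the braid index of $K_{p,q}$ is $p \cdot i(K)$, and $p \geq 2$, $i(K) \geq 2$), contradicting braid index three. So a braid-index-three L-space knot is either a torus knot or a hyperbolic knot provided we rule out the remaining case: a small Seifert fibered complement that is neither hyperbolic nor a torus knot complement.

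The core of the argument is therefore the trichotomy from Thurston's geometrization for knot complements in $S^3$: the complement of a prime, non-torus knot is either hyperbolic or Seifert fibered or toroidal. Toroidal is excluded because it forces the knot to be a satellite, handled above. For the Seifert fibered case, I would invoke the classical fact (Moser, Burde--Zieschang) that the only knots in $S^3$ with Seifert fibered complement are the torus knots. Hence a prime, non-torus knot with braid index three must be hyperbolic, and we are done once primality is established — which follows since L-space knots are fibered knots with fiber genus equal to the Seifert genus and are in particular prime (a connected sum of two nontrivial fibered knots is fibered but never an L-space knot, as L-space knots have thin knot Floer homology supported in a single $\delta$-grading of rank governed by the Alexander polynomial, while $\widehat{HFK}$ of a connected sum is a tensor product of rank $\geq 4$).

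The one subtlety I expect to be the main obstacle is cleanly excluding the case where the knot is a cable or more general satellite without circular reasoning: I want to be careful that the braid-index bound for cables ($i(K_{p,q}) \geq 4$ when $K$ is nontrivial) is the right inequality, and that I have correctly used that satellite L-space knots must be cables — this is the Hedden--Hom cabling criterion quoted later in the introduction, which says $K_{p,q}$ is an L-space knot iff $K$ is and $q \geq p(2g(K)-1)$, together with the fact (Hom--Lidman--Vafaee, or Hedden) that a satellite operator producing an L-space knot forces the pattern to be a cabling pattern. An alternative, cleaner route avoiding satellite subtleties: since a braid-index-three L-space knot $K$ has a positive braid (it is twist positive by the observation in \Cref{section:intro} that the relevant families are, but more robustly, L-space knots of braid index three among the families under study are twist positive on three strands), its complement fibers over $S^1$ with fiber a once-punctured surface, and one can appeal directly to the fact that a fibered knot whose complement is Seifert fibered is a torus knot. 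So the final write-up would be: $K$ prime (from L-space $\Rightarrow$ fibered $\Rightarrow$ prime), then geometrization gives hyperbolic or Seifert fibered or toroidal; Seifert fibered $\Rightarrow$ torus knot by Moser; toroidal $\Rightarrow$ satellite $\Rightarrow$ (by cabling classification of satellite L-space knots) a cable, whose braid index is at least $4$, a contradiction. Hence $K$ is a torus knot or hyperbolic.
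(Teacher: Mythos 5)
Your overall frame (Thurston's trichotomy, Moser/Burde--Zieschang for the Seifert fibered case, and then excluding satellites) is sound, and the conclusion is true, but the load-bearing step is not: it is not a theorem that a satellite L-space knot must be a cable, and in fact it is false. Hom--Lidman--Vafaee's work on Berge--Gabai (1-bridge braid) patterns produces satellite L-space knots whose patterns are genuinely not cable patterns, and Hom's satellite theorem only yields that the companion is an L-space knot and that the winding number is nonzero --- it does not force a cabling pattern. So the chain ``toroidal $\Rightarrow$ satellite $\Rightarrow$ cable $\Rightarrow$ braid index $\geq 4$'' breaks at its first implication, which is exactly the step your contradiction rests on. (A smaller issue: L-space knots do not have ``thin'' knot Floer homology in the usual sense; primality of L-space knots is Krcatovich's theorem, though you do not actually need primality, since a composite knot is a satellite and is excluded by whatever argument handles satellites.)

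The gap is easily repaired, and without any Floer-theoretic input: a genuine satellite has pattern wrapping number $k \geq 2$ and nontrivial companion $J$ with $br(J) \geq 2$, so by Schubert--Schultens \cite{Schubert:BridgeIndex, Schultens:BridgeIndex} its bridge number is at least $k \cdot br(J) \geq 4$, contradicting bridge index $\leq$ braid index $= 3$; this inequality is the one the paper itself invokes in \Cref{Discussion}. The paper's own proof takes a different braid-theoretic route: by Vafaee and Lee--Vafaee, an L-space knot of braid index three is a twisted torus knot on three strands, and Birman--Menasco's classification \cite{BirmanMenasco:EssentialTori} shows that any braid-index-three closure whose exterior contains an essential non-peripheral torus is conjugate to $(\sigma_2)^p(\sigma_1\sigma_2\sigma_2\sigma_1)^q$, whose closure is never a knot; hence the non-torus examples are hyperbolic. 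Either of these replacements for your cable step makes your argument complete; your ``alternative, cleaner route'' at the end does not, since it again leaves the toroidal case untreated.
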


\begin{proof}
Thurston proved that every knot is either a torus knot, a satellite knot, or hyperbolic. Suppose $K$ is an L-space knot of braid index three. By Vafaee \cite[Corollary 3.3]{Vafaee:TwistedTorusKnots} and Lee-Vafaee \cite{LeeVafaee}, $K$ is a twisted torus knot on three strands. 

Suppose $K$ is not a torus knot. To show that it is hyperbolic, we must show that it is not a satellite knot, i.e. $K$ does not contain an essential torus in its exterior. Birman and Menasco \cite{BirmanMenasco:EssentialTori} proved that if $L$ is a link in $S^3$ of braid index three, and $L$ contains an essential, non-peripheral torus in its exterior, then $L$ is the closure of a braid $\beta$ which is conjugate to $\gamma_{p,q} = (\sigma_2)^p (\sigma_1 \sigma_2 \sigma_2 \sigma_1)^q$, where $|p| \geq 2$ and $|q| \geq 1$. These braid closures are always links: the cycle in the symmetric group $S_3$ induced by $\gamma_{p,q}$ contains a 1-cycle. We deduce the non-torus twisted torus knots are hyperbolic. 
\end{proof}

In the next lemma, we see that it suffices to study $T(3,k;2m)$ where $k \equiv 1 \mod 3$.

\begin{lemma} \label{lemma:AdjacentTTKs}
Suppose $q \equiv 1 \mod 3$ and $m \geq 1$. Then the twisted torus knots $T(3,q; 2m)$ and $T(3,q+1; 2m-2)$ are isotopic.
\end{lemma}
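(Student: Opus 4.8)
The plan is to prove this by a direct braid manipulation, showing that the braid word $(\sigma_2\sigma_1)^{q+1}(\sigma_2)^{2m-2}$ can be rewritten, using only braid relations and conjugation, as $(\sigma_2\sigma_1)^q(\sigma_2)^{2m}$. The key observation is that in $B_3$ the full twist $\Delta^2 = (\sigma_2\sigma_1)^3$ is central, so I should first extract a full twist from the extra factor of $(\sigma_2\sigma_1)$ wherever it is most convenient; more precisely, the real content is a local identity trading one copy of $(\sigma_2\sigma_1)$ for two copies of $\sigma_2$ at the level of the closure. I would start from $T(3,q+1;2m-2)$, written as the closure of $(\sigma_2\sigma_1)^{q+1}(\sigma_2)^{2m-2}$, and isolate the subword $\underline{(\sigma_2\sigma_1)}(\sigma_2)^{2m-2}$ (using cyclic permutation to bring the relevant syllables adjacent if needed).

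The crucial step is the identity
\[
(\sigma_2\sigma_1)\,(\sigma_2)^{2m-2} \;=\; \sigma_2\,(\sigma_1\sigma_2)\,(\sigma_2)^{2m-2} \;=\; \sigma_2\,(\sigma_1\sigma_2\sigma_1^{-1})\,\sigma_1\,(\sigma_2)^{2m-2},
\]
and then using the braid relation $\sigma_1\sigma_2\sigma_1^{-1} = \sigma_2^{-1}\sigma_1\sigma_2$ together with the fact that $\sigma_1$ can be pushed past powers of $\sigma_2$ only up to conjugation. In practice the cleanest route is: conjugate the whole braid so that the word reads $(\sigma_2)^{2m-2}(\sigma_2\sigma_1)^{q+1}$, rewrite the leading block as $(\sigma_2)^{2m-1}\sigma_1(\sigma_2\sigma_1)^{q}$, and then recognize $(\sigma_2)^{2m-1}\sigma_1 = \sigma_2\cdot (\sigma_2)^{2m-2}\sigma_1$; a Markov/conjugation move lets me split off $\sigma_2\sigma_1$ and reabsorb the remaining power of $\sigma_2$ as $(\sigma_2)^{2m}$, leaving $(\sigma_2\sigma_1)^{q}(\sigma_2)^{2m}$. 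I will carry out the chain of equalities $\beta_1 = \cdots \approx \cdots = \beta_2$ explicitly, underlining the modified subword at each step per the paper's conventions, and I should double-check that the residue $q \equiv 1 \bmod 3$ is exactly what is needed so that both $T(3,q;2m)$ and $T(3,q+1;2m-2)$ satisfy the $k \not\equiv 0 \bmod 3$ hypothesis of \Cref{defn:TTK} and remain knots rather than links.

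The main obstacle I anticipate is bookkeeping: braid relations in $B_3$ move $\sigma_1$'s and $\sigma_2$'s past each other only at the cost of conjugation, so I must be careful that every ``$\approx$'' step is a genuine conjugation (equivalently, a cyclic permutation of the closed braid word or a global conjugation), and that no stray $\sigma_i^{-1}$ survives — the final word must be positive. A convenient sanity check along the way is that both sides have the same exponent sum: $(\sigma_2\sigma_1)^{q+1}(\sigma_2)^{2m-2}$ has exponent sum $2(q+1) + (2m-2) = 2q + 2m$, and $(\sigma_2\sigma_1)^q(\sigma_2)^{2m}$ has exponent sum $2q + 2m$ as well, so the writhe matches, which is consistent with an isotopy that preserves the braid index three. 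I would also verify genus agreement via the Seifert genus formula for positive braid closures ($2g = c - n + 1$ where $c$ is the number of crossings and $n=3$), giving $2g = (2q+2m) - 3 + 1$ in both cases, as an additional consistency check before committing to the explicit word manipulation.
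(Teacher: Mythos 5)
Your overall strategy — turning one standard braid word into the other by braid relations and conjugation — is the same as the paper's, but as sketched it has a genuine gap precisely at the step where all the work must happen. First, your displayed ``crucial identity'' is false as written: $(\sigma_2\sigma_1)(\sigma_2)^{2m-2}$ contains $2m-1$ letters $\sigma_2$, while $\sigma_2(\sigma_1\sigma_2)(\sigma_2)^{2m-2}$ contains $2m$, so the first equality cannot hold; likewise ``$\sigma_1$ can be pushed past powers of $\sigma_2$ up to conjugation'' is not a legitimate local move, since conjugation acts on the whole closed braid word, not on a letter inside it. Second, and more seriously, the decisive step ``a Markov/conjugation move lets me split off $\sigma_2\sigma_1$ and reabsorb the remaining power of $\sigma_2$ as $(\sigma_2)^{2m}$'' is not a move at all. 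After your (correct) preliminary conjugation you sit at $(\sigma_2)^{2m-1}\sigma_1(\sigma_2\sigma_1)^{q}$, and you need its closure to agree with that of $(\sigma_2)^{2m}(\sigma_2\sigma_1)^{q}$; this amounts to trading one $\sigma_1$ for one $\sigma_2$, which is exactly the nontrivial content of \Cref{lemma:AdjacentTTKs} and is not accomplished by cyclic permutation, conjugation, or any available Markov move (no destabilization applies to this word, and stabilization only adds strands).

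The missing engine is where the congruence enters the algebra, not merely the hypotheses. The paper writes $q=3r+1$, conjugates by the Garside element, and uses centrality of the full twist, $(\sigma_1\sigma_2)^{3r}=(\sigma_2\sigma_1)^{3r}$, together with a ladder of braid relations $\sigma_1\sigma_2\sigma_1=\sigma_2\sigma_1\sigma_2$ that converts the surplus $\sigma_1$'s into $\sigma_2$'s one at a time. In your proposal $q\equiv 1\pmod 3$ appears only as a knot-versus-link sanity check; but if the manipulation you describe went through without using it, the identical argument would prove the statement for every $q$, which is false — for $q\equiv 2\pmod 3$ the closure of $(\sigma_2\sigma_1)^{q+1}(\sigma_2)^{2m-2}$ is a three-component link, so no such isotopy exists. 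The exponent-sum and genus checks you list are fine consistency checks but carry no force: they are preserved by any putative identification, true or false. To repair the proof you must exhibit the explicit chain of positive braid relations, powered by $(\sigma_1\sigma_2)^3=(\sigma_2\sigma_1)^3$, that performs the exchange — which is essentially the computation the paper carries out.
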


\begin{proof}
We show that the braid words for $T(3,q+1; 2m-2)$ and $T(3,q;2m)$ are related by braid relations and conjugation. As $q \equiv 1 \mod 3$, we will write $q = 3r + 1$. 

First, recall that conjugating a braid $\beta$ in $B_n$ by the Garside element $\Delta = (\sigma_1)(\sigma_1 \sigma_2) \ldots (\sigma_1 \sigma_2 \ldots \sigma_{n-1})$ has the effect of exchanging $\sigma_i$ and $\sigma_{n-i}$ for all $1 \leq i \leq n-1$. In particular, when $n=3$, conjugating a 3-braid by the Garside element corresponds to swapping $\sigma_1$ and $\sigma_2$. This conjugation procedure justifies the equivalence in the first line of the sequence of braid moves below.

\begin{align*}
(\sigma_2 \sigma_1)^{q} (\sigma_2)^{2m} &\approx (\sigma_1 \sigma_2)^{q} (\sigma_1)^{2m} \\
&= (\sigma_1 \sigma_2)^{3r} (\sigma_1 \sigma_2) \sigma_1 (\sigma_1)^{2m-1}\\
&= \underline{(\sigma_1 \sigma_2)^{3r}} \ (\underline{\sigma_1 \sigma_2 \sigma_1}) (\sigma_1)^{2m-1}
\intertext{Since $(\sigma_1 \sigma_2)^3 = (\sigma_2 \sigma_1)^3$, we have:}
%
%
&= (\sigma_2 \sigma_1)^{3r} (\sigma_2 \sigma_1 \sigma_2) (\sigma_1)^{2m-1}\\
&= (\sigma_2 \sigma_1)^{3r} (\sigma_2 \sigma_1 \sigma_2) \sigma_1 (\sigma_1)^{2m-2}\\
&= (\sigma_2 \sigma_1)^{3r+2} (\sigma_1)^{2m-2}\\
&\approx (\sigma_1)^{2m-2} (\sigma_2 \sigma_1)^{3r+2} \\
&= (\sigma_1)^{2m-3} \underline{(\sigma_1) (\sigma_2 \sigma_1)} (\sigma_2 \sigma_1)^{3r+1} \\
&= (\sigma_1)^{2m-3} (\sigma_2 \sigma_1 \sigma_2) (\sigma_2 \sigma_1)^{3r+1} \\
&= (\sigma_1)^{2m-4} (\underline{\sigma_1 \sigma_2 \sigma_1}) \sigma_2 (\sigma_2 \sigma_1)^{3r+1} \\
&= (\sigma_1)^{2m-4} (\sigma_2 \sigma_1 \sigma_2) \sigma_2 (\sigma_2 \sigma_1)^{3r+1}\\
&= (\sigma_1)^{2m-4} (\sigma_2 \sigma_1) \sigma_2^2 (\sigma_2 \sigma_1)^{3r+1}
\intertext{We continue extracting a $\sigma_1$ term from the leftmost parenthetical, and then applying a braid relation to the leftmost subword $\sigma_1 \sigma_2 \sigma_1$. At the penultimate application of this process, we get:}
&= (\sigma_1)(\sigma_2 \sigma_1\sigma_2)\sigma_2^{2m-4} (\sigma_2 \sigma_1)^{3r+1} \\
&= (\underline{\sigma_1\sigma_2 \sigma_1})\sigma_2^{2m-3} (\sigma_2 \sigma_1)^{3r+1} \\
&= (\sigma_2\sigma_1 \sigma_2)\sigma_2^{2m-3} (\sigma_2 \sigma_1)^{3r+1} \\
%
%
&\approx  (\sigma_2 \sigma_1)^{3r+2}\sigma_2^{2m-2} 
\end{align*}
But this is exactly the braid whose closure is $T(3,q+1; 2(m-1))$. Thus, when $q \equiv 1 \mod 3$ and $m \geq 1$, the twisted torus knots $T(3, q; 2m)$ and $T(3, q+1; 2(m-1))$ are isotopic. 
\end{proof}

\begin{cor} \label{StandardForm}
Every positive twisted torus knot on three strands can be presented as a positive twisted torus knot $T(3,q;2m)$ where $q \geq 4$, $q \equiv 1 \mod 3$ and $m \geq 0$. \hfill $\Box$
\end{cor}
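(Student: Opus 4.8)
The plan is to show that an arbitrary positive twisted torus knot on three strands can be brought into the normal form $T(3,q;2m)$ with $q \geq 4$, $q \equiv 1 \bmod 3$, $m \geq 0$, using only the reductions already established. First I would recall that by Definition \ref{defn:TTK} and the remarks following it, a positive twisted torus knot on three strands is (after the preliminary normalizations explained there) the closure of $(\sigma_2\sigma_1)^k(\sigma_2)^{2\ell}$ for some $k \geq 4$ with $k \not\equiv 0 \bmod 3$ and some $\ell \geq 0$; in particular we may assume $k \equiv 1$ or $k \equiv 2 \bmod 3$. The only thing to do is to eliminate the case $k \equiv 2 \bmod 3$, since if $k \equiv 1 \bmod 3$ we are already done with $(q,m) = (k,\ell)$.

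The key step is to apply Lemma \ref{lemma:AdjacentTTKs} in reverse. That lemma says $T(3,q;2m) \cong T(3,q+1;2m-2)$ whenever $q \equiv 1 \bmod 3$ and $m \geq 1$; reading the isotopy backwards, $T(3,q';2m') \cong T(3,q'-1;2m'+2)$ whenever $q'-1 \equiv 1 \bmod 3$, i.e. $q' \equiv 2 \bmod 3$, and $m' \geq 0$ (so that $m' + 1 \geq 1$, as needed on the $T(3,q'-1; \cdot)$ side). So suppose our knot is $T(3,k;2\ell)$ with $k \equiv 2 \bmod 3$ and $\ell \geq 0$. Applying the backward isotopy once yields $T(3,k-1;2\ell+2)$ with $k-1 \equiv 1 \bmod 3$. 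It remains to check the inequality $k - 1 \geq 4$, i.e. $k \geq 5$: since $k \geq 4$ and $k \equiv 2 \bmod 3$, the smallest such $k$ is $k = 5$, so indeed $k - 1 \geq 4$. Setting $q = k-1$ and $m = \ell+1$ gives the desired normal form, and $m \geq 1 \geq 0$.

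I would then package this as: every positive twisted torus knot on three strands is $T(3,k;2\ell)$ with $k \geq 4$, $k \not\equiv 0$, $\ell \geq 0$; if $k \equiv 1 \bmod 3$ it is already in the claimed form, and if $k \equiv 2 \bmod 3$ then $k \geq 5$ and it equals $T(3,k-1;2(\ell+1))$ with $k - 1 \geq 4$ and $k-1 \equiv 1 \bmod 3$. The main (and essentially only) obstacle is the bookkeeping on the congruence classes and the boundary inequalities — verifying that applying Lemma \ref{lemma:AdjacentTTKs} in the $k \equiv 2$ case never pushes the first parameter below $4$ and never requires a twist parameter that is unavailable — but this is immediate from $k \geq 4$. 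No new braid manipulation is needed beyond what is in Lemma \ref{lemma:AdjacentTTKs}.
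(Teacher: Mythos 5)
Your argument is correct and is essentially the intended one: the paper states \Cref{StandardForm} as an immediate consequence of \Cref{lemma:AdjacentTTKs}, and your reverse application of that lemma in the $k \equiv 2 \bmod 3$ case, together with the boundary check $k \geq 5 \Rightarrow k-1 \geq 4$, is exactly the bookkeeping that justifies it.
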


\begin{prop} \label{lemma:PotentialConcordance}
Fix a torus knot $K = T(3,q) = T(3,q;0)$ with $q \geq 4$. There are $\lfloor \frac{q}{3}\rfloor-1$ other twisted torus knots $T$ of braid index 3 with $\tau(T)=\tau(K)$ where $\tau$ denotes the Ozsv\'ath-Szab\'o concordance invariant.
\end{prop}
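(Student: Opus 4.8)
The plan is to count exactly how many twisted torus knots $T(3,q';2m)$ in standard form (so $q' \geq 4$, $q' \equiv 1 \bmod 3$, $m \geq 0$) have the same Ozsv\'ath--Szab\'o concordance invariant $\tau$ as $K = T(3,q;0)$, and to show this count is $\lfloor q/3 \rfloor$, yielding $\lfloor q/3 \rfloor - 1$ knots other than $K$ itself. The key fact I would use is that for fibered strongly quasipositive knots — in particular for L-space knots, and all $T(3,q';2m)$ with $q' \geq 4$ are positive braid closures hence fibered and strongly quasipositive — one has $\tau(T) = g(T)$, the Seifert genus. So the problem reduces to computing the Seifert genus of $T(3,q';2m)$ and solving $g(T(3,q';2m)) = g(T(3,q))$.

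\medskip

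First I would compute genera via the positive braid presentation. For a positive braid $\beta$ on $n$ strands with $c$ crossings whose closure is a knot, the Seifert algorithm applied to the braid closure gives a fiber surface, and the genus is $2g = c - n + 1$. For $T(3,q';2m)$, presented as the closure of $(\sigma_2\sigma_1)^{q'}(\sigma_2)^{2m} \in B_3$, we have $n = 3$ and $c = 2q' + 2m$, so $2g(T(3,q';2m)) = 2q' + 2m - 2$, i.e. $g(T(3,q';2m)) = q' + m - 1$. In particular $g(T(3,q)) = g(T(3,q;0)) = q - 1$. Hence the concordance condition $\tau(T(3,q';2m)) = \tau(T(3,q))$ becomes simply $q' + m = q$.

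\medskip

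Next I would enumerate solutions. We need pairs $(q', m)$ with $q' \geq 4$, $q' \equiv 1 \bmod 3$, $m \geq 0$, and $q' + m = q$; equivalently $q' \in \{4, 7, 10, \dots\}$ with $q' \leq q$ and $q' \equiv 1 \bmod 3$, where $m = q - q'$ is then automatically a nonnegative integer. Since $q = T(3,q)$ is a torus knot we may as well note $q \geq 4$ (and $q \not\equiv 0 \bmod 3$, consistent with \Cref{defn:TTK}). The number of integers $q'$ with $4 \leq q' \leq q$ and $q' \equiv 1 \bmod 3$ is exactly $\lfloor (q-1)/3 \rfloor = \lfloor q/3 \rfloor$ when $q \equiv 1,2 \bmod 3$ — a short casework check on $q \bmod 3$ confirms the clean formula $\lfloor q/3 \rfloor$ in all relevant cases. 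One of these solutions is $q' = q$, $m = 0$ (valid since $q \equiv 1 \bmod 3$ forces the torus knot itself to appear; if instead $q \equiv 2 \bmod 3$ one should double-check using \Cref{StandardForm} that $T(3,q)$ still gets rewritten into standard form with $q' \equiv 1 \bmod 3$, which it does), giving $K$ itself. Subtracting this one, we get $\lfloor q/3 \rfloor - 1$ twisted torus knots $T \neq K$ of braid index 3 with $\tau(T) = \tau(K)$, and by \Cref{StandardForm} together with \Cref{lemma:AdjacentTTKs} these standard-form representatives are pairwise non-isotopic, so we are genuinely counting distinct knots.

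\medskip

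The main obstacle I anticipate is bookkeeping rather than depth: one must be careful that the standard form of \Cref{StandardForm} really does capture every braid-index-three positive twisted torus knot exactly once (so that the count of $(q',m)$ pairs equals the count of knots), and one must double-check the edge behavior of $T(3,q)$ when $q \equiv 2 \bmod 3$ versus $q \equiv 1 \bmod 3$ to be sure the floor formula $\lfloor q/3 \rfloor - 1$ is uniform. I would also want to confirm that $\tau$ of a positive braid knot equals its genus in exactly the form needed — this follows from $\tau = g$ for fibered strongly quasipositive knots (or directly from the fact that positive braid closures have $\tau(K) = g(K) = g_4(K)$), and it is precisely the input that lets the concordance-invariance of $\tau$ be leveraged. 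Once these conventions are pinned down, the counting is immediate from the genus formula $g(T(3,q';2m)) = q' + m - 1$.
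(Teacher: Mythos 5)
Your proposal is correct and takes essentially the same route as the paper: reduce $\tau$ to the Seifert genus via Hedden's $\tau = g$ for positive braid (fibered, strongly quasipositive) knots, compute the genus from the positive $3$-braid presentation (crossing number/writhe), and count the standard-form parameters $(q',m)$ with $q'+m=q$, with the same $q \bmod 3$ casework the paper uses. One small caveat: pairwise non-isotopy of the standard-form representatives does not follow from \Cref{StandardForm} and \Cref{lemma:AdjacentTTKs} as you assert --- the paper defers that point to the later signature computations (see the remark following the proposition) --- but this does not change the count or the substance of the argument.
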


In fact, our proof of \Cref{thm:DistinctConcordanceClasses} will show that the $\lfloor \frac{q}{3}\rfloor-1$ twisted torus knots are pairwise distinct from $T(3,q)$ and from each other. 

\begin{proof}
Since $T$ and $K$ are both positive braid knots, they are fibered and strongly quasipositive \cite{Stallings:Fibered}. By Hedden, $g_4(T) = g(T) = \tau(T)$ and $g_4(K)=g(K)=\tau(K)$ \cite{Hedden:Positivity}. To deduce the statement, it suffices to determine which twisted torus knots have the same Seifert genus as $K$. 

For positive braid knots, the standard Bennequin surface is a fiber surface for the knot \cite{Stallings:Fibered}. It is therefore straightforward to compute the Seifert genus of any positive braid knot $P = \widehat{\rho}$, where $\rho$ is a positive braid on $n$ strands: $2g(P)-1 = wr(\rho)-n$. If two positive $n$-braids have the same writhe, then they have the same Seifert genus, hence the same $\tau$. Therefore, our problem reduces to counting the number of twisted torus knots on 3 strands with the same writhe as $K$.

First, suppose that $q \equiv 1 \mod 3$, so $q = 3r+1$, where $r \geq 1$. The writhe of $T(3,q) = T(3,q; 0)$ is $2q = 2(3r+1)$. Note that a twisted torus knot of the form $T(3, q+\epsilon; 2m)$ with $\epsilon \geq 1, m \geq 0$ can never be concordant to $T(3,q)$, as these knots do not have the same writhe. However, it is elementary to check that for all $s$ with $0 \leq s \leq r-1$, the knots $T(3, 3(r-s)+1; 6s)$ all have writhe $6r+2$; thus, they have the same Seifert genus. By \Cref{lemma:AdjacentTTKs}, the twisted torus knots $T(3, 3(r-s)+1; 6s)$ and $T(3, 3(r-s)+2; 6s-2)$ are isotopic knots. Therefore, to determine the number of positive twisted torus knots $T$ with $\tau(T)=\tau(K)$, it suffices to count how many twisted torus knots of the form $T(3, 3(r-s)+1; 6s)$ there are, where $r-1 \geq s \geq 1$. A simple counting argument allows us to deduce that there are
$\frac{q-4}{3} = \lfloor \frac{q}{3} \rfloor -1$
such knots.

Next, suppose $q \equiv 2 \mod 3$, so we are studying torus knots of the form $T(3,q) = T(3, 3r+2)$. By \Cref{lemma:AdjacentTTKs}, the knots $T(3,3r+2)$ and $T(3, 3r+1; 2)$ are isotopic. Applying the same argument as in the previous paragraph, every twisted torus knot of the form $T(3, 3(r-s)+1; 2+6s)$, where $r-1 \geq s \geq 1$, will have the same writhe as $T(3, 3r+1; 2)$. There are $\lfloor \frac{q}{3} \rfloor -1$ such knots.

Therefore, there are $\lfloor \frac{q}{3} \rfloor -1$ many twisted torus knots $T$ with $\tau(T)=\tau(K)$, regardless of the congruence class of $q \mod 3$.
\end{proof}

Our long-term goal is to show that, despite there being many twisted torus knots with the same $\tau$ invariants, these knots represent distinct concordance classes. This is proved by computing the signatures of 3-stranded twisted torus knots.

\subsection{Computing the signature via the Goeritz matrix}

To compute the signature, we use the Gordon-Litherland \cite{GordonLitherland:Signature} method, which now we recall: let $\mathcal{K}$ be a knot in $S^3$, and fix a regular projection $K$ of $\mathcal{K}$. First, we checkerboard color $K$, where the checkerboard has black and white regions (without loss of generality, we assume that the unbounded region is white). Each white region of the diagram is labelled $X_i$, and we set $X_0$ to be the unbounded white region. To each double point $D$ in $K$, we assign a value $\eta(D) = \pm 1$, as in \Cref{fig:Goeritz}. Each double point $D$ is also assigned a \textit{type}, i.e. it is either \textit{Type I} or \textit{Type II}; see \Cref{fig:Goeritz}.

This information is organized into a symmetric, integral matrix $G'(K)$, with entries $g_{ij}$, where:
\begin{align*}
g_{ij} = \begin{cases}
 - \displaystyle \sum \eta(D) & \text{summed over double points $D$ incident to $X_i$ and $X_j$, where $i \neq j$} \\ \\
\displaystyle- \sum_{k, k \neq i} g_{ik} & \text{if $i = j$}
\end{cases}
\end{align*} 

The \textit{Goeritz matrix} $G(K)$ is constructed from $G'(K)$ by deleting the $0^{th}$ row and column of $G'(K)$. Thus, $G$ is a symmetric, integral, $n \times n$ matrix, where $n$ is the number of bounded white regions in $K$. Gordon-Litherland \cite{GordonLitherland:Signature} proved that 
\begin{align} \label{eqn:GordonLitherland}
\sigma(\mathcal{K}) = \text{sign}(G(K))-\mu(K)
\end{align}

where $\mu(K) = \sum \eta(D)$, where $D$ is a Type II double point in $K$. 

\vspace{-1cm}
\begin{figure}[H]
\labellist \tiny
\pinlabel {$\eta(D) = +1$} at 120 -30 
\pinlabel {$\eta(D) = -1$} at 440 -30 
\pinlabel {Type I} at 1050 -30 
\pinlabel {Type II} at 1380 -30 
\endlabellist
\begin{center}
\vspace{1cm}
\hspace{-2em}
\includegraphics[scale=.25]{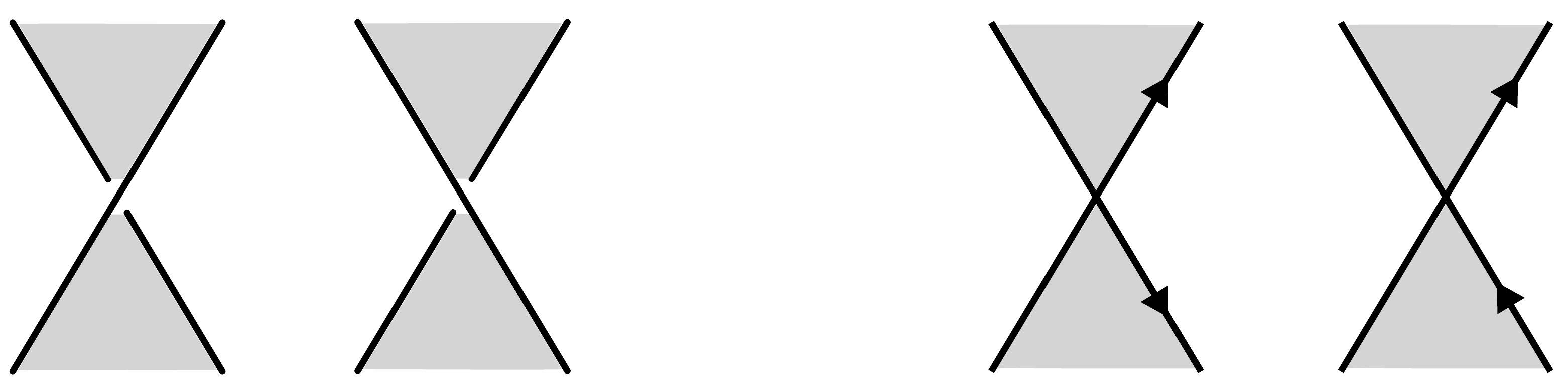}
\end{center}
\vspace*{2mm}
\captionof{figure}{Conventions for $\eta(D)$ are on the \textbf{left}, and assignments for \textit{Type I} and \textit{II} double points are on the \textbf{right}.}
\label{fig:Goeritz}
\end{figure}

We are now ready to compute the signature of $T(3,q;2m)$ where $q \geq 4$ and $m \geq 0$.

\begin{prop} \label{GoeritzSetup}
Let $K = T(3,q; 2m)$ denote a twisted torus knot with $q=3k+1 \equiv 1 \mod 3$, where $k \geq 1$ and $m \geq 0$. There is a $q \times q$ Goeritz matrix $G(K)$ for $K$, where all the non-zero entries are either along the tridiagonal, or in the entries $(1,q)$ or $(q,1)$. 
\end{prop}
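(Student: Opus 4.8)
The plan is to build an explicit diagram for $K = T(3,q;2m)$ from its braid word $(\sigma_2\sigma_1)^q(\sigma_2)^{2m}$, checkerboard color it, and read off the Goeritz matrix directly. First I would close the braid $(\sigma_2\sigma_1)^q(\sigma_2)^{2m}$ in the standard way and consider the checkerboard coloring in which the white regions are exactly the ``bounded complementary disks'' one sees between consecutive crossings. The key observation is that the subword $(\sigma_2\sigma_1)^q$ is the standard diagram of the torus link $T(3,q)$, whose complement, after checkerboard coloring, has white regions that sit in a single cyclic chain running around the braid closure — each white region is adjacent only to its two cyclic neighbours, which produces a tridiagonal pattern with two extra ``wrap-around'' entries in positions $(1,q)$ and $(q,1)$. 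I would label these white regions $X_0, X_1, \dots, X_{q-1}$ cyclically, declare $X_0$ to be the unbounded one, and check that each crossing of the $(\sigma_2\sigma_1)^q$ part is incident to exactly two (cyclically) consecutive white regions.

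Next I would account for the $(\sigma_2)^{2m}$ factor. These are $2m$ extra crossings all between the same pair of strands, so the extra white region(s) they create are inserted into the chain at one spot; the crossings among them, and the crossings linking them to the neighbouring regions of the $(\sigma_2\sigma_1)^q$ block, only affect entries along the tridiagonal and the same one pair of wrap-around entries (because the insertion happens ``at the seam,'' between $X_{q-1}$ and $X_0$, or can be arranged to by an isotopy / choice of where to place the twist region). Crucially, adding a full or partial twist on two adjacent strands does not create any white region adjacent to a non-consecutive white region, so the banded-plus-corners structure of $G'(K)$ is preserved. One then deletes the $0^{\text{th}}$ row and column to get the $q\times q$ Goeritz matrix $G(K)$, and the non-zero entries remain confined to the tridiagonal and the corners $(1,q),(q,1)$.

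The main obstacle I expect is bookkeeping: verifying precisely that the number of bounded white regions is $q$ (so that $G(K)$ is genuinely $q\times q$, not $q+1$ or $q-1$), and confirming that the twist region $(\sigma_2)^{2m}$ can be positioned so that it merges into the cyclic chain of white regions without spawning a region adjacent to two non-consecutive members of the chain — i.e.\ that the ``corner'' entries stay confined to $(1,q)$ and $(q,1)$ rather than migrating. I would handle this by drawing the diagram carefully for small $q$ and $m$ (say $q=4$, $m=1$), identifying the general pattern, and then arguing inductively in $m$ that adding two more $\sigma_2$ crossings modifies only the already-nonzero tridiagonal and corner entries. The signs and exact values of the $g_{ij}$ — which involve the $\eta(D)$ and Type I/II assignments from \Cref{fig:Goeritz} — are not needed for this statement, only for the later signature computation, so here I would only track which entries are forced to be zero by non-adjacency of white regions.
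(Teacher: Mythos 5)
Your overall strategy --- draw the closed braid, checkerboard colour it, and read the zero pattern of the Goeritz matrix off the adjacency of white regions --- is the same as the paper's, but as set up it does not produce a $q\times q$ matrix, and the problem is not mere bookkeeping. You keep the word $(\sigma_2\sigma_1)^q(\sigma_2)^{2m}$ and put the white chain in the $\sigma_2$-channel, so the $2m$ twist crossings lie in the \emph{white} channel: each consecutive pair of them bounds a new white region, as your own phrase ``the extra white region(s) they create are inserted into the chain'' concedes. That gives $q+2m$ bounded white regions and hence a $(q+2m)\times(q+2m)$ Goeritz matrix, and no isotopy of where along the closure the twist region sits can repair this, because the issue is which checkerboard colour the twist bigons fall into, not where they are. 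The paper's proof does exactly the work you are missing here: it first conjugates by the Garside element and cycles one letter to rewrite the braid as $(\sigma_2\sigma_1)^q(\sigma_1)^{2m}$, so the $2m$ extra crossings lie in the black channel. They then create no new white regions; their only effect is on the multiplicity of the adjacency between the deleted region $X_0$ and the wrap-around region $X_q$ (giving $g_{0,q}=-(2m+1)$) and on the diagonal entry $g_{q,q}=2m-1$, leaving exactly $q+1$ white regions and a genuinely $q\times q$ matrix. Equivalently, you could keep your braid word but choose the other colouring, with the white chain in the $\sigma_1$-channel, so that every $\sigma_2$ crossing (including all the twists) is incident to the region that gets deleted.

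There is a second structural slip: you label the white regions $X_0,\dots,X_{q-1}$ cyclically and declare $X_0$, a member of the chain, to be the unbounded (deleted) region. If the deleted region belongs to the cycle, deleting its row and column breaks the cycle into a path, and the resulting matrix is purely tridiagonal with no $(1,q)$, $(q,1)$ entries. In the paper's diagram the $q$ bounded white regions $X_1,\dots,X_q$ form the entire cycle (adjacent exactly when $|i-j|\equiv\pm1 \bmod q$), while the deleted region $X_0$ is a hub adjacent to all of them through the $\sigma_1$ crossings; it is the survival of the whole cycle after deletion that produces the corner entries. So the two points your sketch defers --- the count of bounded white regions and the location of the corner entries --- are precisely where the rewriting of the braid word (or the correct choice of colouring) is needed; without that step the proposition as stated is not established.
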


\begin{proof} 
Throughout, we conflate a knot and a regular projection of it.

Let $K$ be the specified knot. Rather than presenting $K$ as the closure of $(\sigma_2 \sigma_1)^{q}(\sigma_2)^{2m}$, we conjugate the braid by the Garside element (as in \Cref{lemma:AdjacentTTKs}), and view $K$ as the closure of $(\sigma_1 \sigma_2)^q (\sigma_1)^{2m}$. Next, we modify the braid:
\begin{align*}
(\sigma_1 \sigma_2)^q (\sigma_1)^{2m} &=(\underline{\sigma_1} \sigma_2) (\sigma_1 \sigma_2)^{q-2} (\sigma_1 \sigma_2) (\sigma_1) (\sigma_1)^{2m-1} \\
&\approx (\sigma_2) (\sigma_1 \sigma_2)^{q-2} (\sigma_1 \sigma_2) (\sigma_1)(\sigma_1)^{2m-1} (\sigma_1)\\
&=(\sigma_2 \sigma_1)^{q}(\sigma_1)^{2m}\\
&=: \beta
\end{align*}

This presentation is well adapted to find a spanning surface. Indeed, we can produce a checkerboard coloring with exactly $q+1$ white regions: present $K$ as the closure of $\beta$, and read $\beta$ from left to right. For $i \neq 0, q$, the white region $X_i$ lies between the $i^{th}$ and $(i+1)^{th}$ occurrences of $\sigma_2$ in $\beta$. $X_{q}$ is the white region whose boundary contains the following double points: the first $\sigma_2$ in $\beta$, the last $\sigma_2$ in $\beta$, and double points from $(\sigma_1)^{2m}$. See \Cref{fig:Example} for this labelling demonstrated on $K = T(3,7;4)$.

\begin{figure}[H]
\begin{tikzpicture}
    \draw (1, 0) node[inner sep=0] {\includegraphics[scale=0.7]{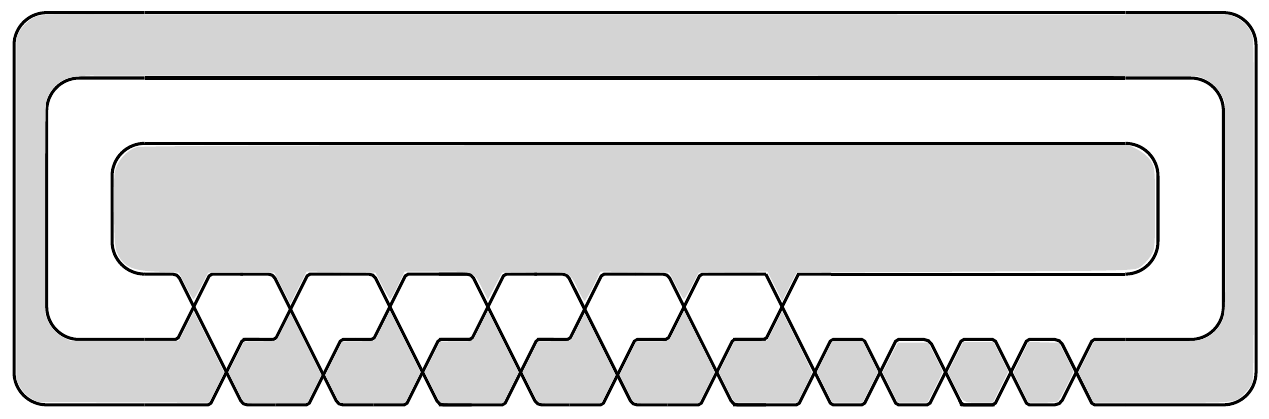}};
    \draw (-3.75, -1.2) node {$X_1$};
    \draw (-2.5, -1.2) node {$X_2$};
    \draw (-1.3, -1.2) node {$X_3$};
    \draw (-0.2, -1.2) node {$X_4$};
    \draw (1, -1.2) node {$X_5$};
    \draw (2.2, -1.2) node {$X_6$};
    \draw (5, -1.2) node {$X_7$};
\end{tikzpicture}
\captionof{figure}{The twisted torus knot $T(3,7;4)$ and a well-adapted spanning surface used to identify a Goeritz matrix. The label for the unbounded region, $X_0$, is suppressed. In this figure, $\sigma_1$ is closer to the bottom of the page, and $\sigma_2$ is closer to the top of the page. }
\label{fig:Example}
\end{figure}

We now study the double points/crossings, and understand $\eta(D)$ and the type of each crossing. By inspection, we see that each double point coming from a $\sigma_1$ has $\eta(D) = +1$, while each double point coming from a $\sigma_2$ has $\eta(D) = -1$. Additionally, each $\sigma_1$ corresponds to a Type II double point, while each $\sigma_2$ corresponds to a Type I double point. In particular, for $T(3, q; 2m)$, we have:
\begin{align} \label{GordonLitherland_Mu}
\mu(K)&=\sum_{\substack{\text{Type II}\\ \text{double points}}}\eta(D) = \sum_{\sigma_1 \in  \beta} (+1)  = q + 2m
\end{align}

We now construct the $(q+1) \times (q+1)$ matrix $G'(K)$. We begin by collecting some observations about the incidence of the white regions, and what that tells us about the non-diagonal entries $g_{ij}$. For clarity of exposition, we let $D_i$ denote a double point arising from $\sigma_i$. As we observed above, $\eta(D_1)=1$ and $\eta(D_2)=-1$.

\begin{itemize}
\item When $i \neq q$, the white regions $X_0$ and $X_i$ meet in exactly one double point corresponding to the $i^{th}$ occurrence of $\sigma_1$ in $\beta$. Therefore, for $i \not \in \{0,q\}$, $g_{0,i} = g_{i,0}= -\eta(D_1)=-1$.
\item The white regions $X_0$ and $X_q$ meet in $2m+1$ double points; each double point arises from a single $\sigma_1$ which has $\eta(D_1)=1$. Thus, $g_{0,q} = g_{q,0}=-(2m+1)$. 
\item For distinct, non-zero $i$ and $j$, the regions $X_i$ and $X_j$ meet in exactly one double point (coming from a single occurrence of $\sigma_2$) if and only if $|i-j| \equiv \pm 1 \mod q$. Therefore, in this case, $g_{ij}=g_{ji}=-\eta(D_2)=+1$.
\end{itemize}

This is enough information to tell us about the diagonal entries of $G'(K)$:
\begin{itemize}
\item $g_{0,0} = - (g_{1,0} + g_{2,0} + \ldots + g_{q-1,0} + g_{q,0}) = -\left((q-1)(-1) + (-1)(2m+1)\right) = 2m+q$.
\item For $i \not \in \{0, q\}$, $g_{i,i} = -(g_{0,i} + g_{i-1,i} + g_{i+1,i}) = -(-1 + 1 + 1) = -1$.
\item $g_{q,q} = -(g_{0,q} + g_{1,q} + g_{q-1,q}) = -((-1)(2m+1) + 1 + 1) = 2m-1$. 
\end{itemize}

Since $G(K)$ is obtained by deleting the zero-th row and column of $G'(K)$, the matrix $G_k$ is the $(3k+1) \times (3k+1)$ matrix with the following entries:
\begin{align*}
g_{ij} = \begin{cases}
-1 & i=j \neq q \\
2m-1 & i=j=q \\
+1 & |i-j| \equiv \pm 1 \mod q \\
0 & \text{otherwise}
\end{cases}
\end{align*}

As claimed, $G(K)$ has non-zero entries in the tridiagonal, and in the entries $(1,q)$ and $(q,1)$. 
\end{proof}

The matrices $G'(K)$ and $G(K)$ for the twisted torus knot $T(3,7;6)$ from \Cref{fig:Example} are below.
\[ 
\hspace{-0.5cm}
G'(K) =  
\begin{blockarray}{ccccccccc}
& \scriptscriptstyle{\textcolor{gray}{0}} & \scriptscriptstyle{\textcolor{gray}{1}} & \scriptscriptstyle{\textcolor{gray}{2}} & \scriptscriptstyle{\textcolor{gray}{3}} & \scriptscriptstyle{\textcolor{gray}{4}} & \scriptscriptstyle{\textcolor{gray}{5}} & \scriptscriptstyle{\textcolor{gray}{6}} & \scriptscriptstyle{\textcolor{gray}{7}} \\
\begin{block}{c(cccccccc)}
\scriptscriptstyle{\textcolor{gray}{0}}&11 & -1 & -1 & -1 & -1 & -1 & -1 & -5 \\
\scriptscriptstyle{\textcolor{gray}{1}}&-1 & -1 & +1 & 0 & 0 & 0 & 0 & +1 \\
\scriptscriptstyle{\textcolor{gray}{2}}& -1 & +1 & -1 & +1 & 0 & 0 & 0 & 0 \\
\scriptscriptstyle{\textcolor{gray}{3}}& -1 & 0 & +1 & -1 & +1 & 0 & 0 & 0 \\
\scriptscriptstyle{\textcolor{gray}{4}}& -1 & 0 & 0 & +1 & -1 & +1 & 0  & 0\\
\scriptscriptstyle{\textcolor{gray}{5}}& -1 & 0 & 0 & 0 & +1 & -1 & +1 & 0 \\
\scriptscriptstyle{\textcolor{gray}{6}}&-1 & 0 & 0 & 0 & 0 & +1 & -1 & +1 \\
\scriptscriptstyle{\textcolor{gray}{7}}&-5 & +1 & 0 & 0 & 0 & 0 & +1 & +3 \\
\end{block}
\end{blockarray}
\qquad G(K) = 
 \begin{pmatrix}
    \begin{array}{ccccccc}
 -1 & +1 & 0 & 0 & 0 & 0 & +1 \\
 +1 & -1 & +1 & 0 & 0 & 0 & 0 \\
 0 & +1 & -1 & +1 & 0 & 0 & 0 \\
 0 & 0 & +1 & -1 & +1 & 0  & 0\\
 0 & 0 & 0 & +1 & -1 & +1 & 0 \\
 0 & 0 & 0 & 0 & +1 & -1 & +1 \\
 +1 & 0 & 0 & 0 & 0 & +1 & +3 \\
\end{array}
\end{pmatrix}
\]

Our goal is to compute the signature of $G(K)$. We will need a powerful theorem, typically referred to as ``Sylvester's Law of Inertia'': \textit{two symmetric, $n \times n$ matrices have the same number of positive, negative, and zero eigenvalues if and only if they are congruent} \cite[Theorem 9.13]{Carrell:LinearAlgebraBook}. (Recall that two matrices $A$ and $B$ are congruent if there exists a non-singular matrix $S$ such that $B = SAS^T$.) We will prove that $G(K)$ is congruent to a diagonal matrix $D$. Since, for diagonal matrices, the eigenvalues are exactly the diagonal entries, we can compute the signature of $G(K)$ from $D$. 

\begin{prop} \label{DiagonalizeG(K)} Let $K$ be the positive twisted torus knot $T(3, 3k+1;2m)$, where $k \geq 1, m \geq 0$. Then the Goeritz matrix $G(K)$ is congruent to the diagonal matrix $D$, where 
\begin{align*}
d_{ii} = \begin{cases}
(2m-1) \pm 2 & i = q \\
1 & i \equiv 0 \mod 3 \\
-1 & \text{otherwise}
\end{cases}
\end{align*}
\end{prop}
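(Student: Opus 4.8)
The plan is to diagonalize $G(K)$ by a sequence of symmetric row-and-column operations (congruences $A \mapsto SAS^T$), clearing out the tridiagonal structure one step at a time, and tracking the single ``wraparound'' contribution in the $(1,q)$ and $(q,1)$ entries separately. Concretely, I would first perform the elimination on the $(q-1) \times (q-1)$ principal block indexed by $\{1, \dots, q-1\}$, which is the standard tridiagonal matrix with $-1$ on the diagonal and $+1$ on the off-diagonals; this is exactly the kind of matrix whose diagonalization follows from the recursion $d_{i} = -1 - 1/d_{i-1}$, and since $q - 1 = 3k$ is divisible by $3$, the sequence of pivots is periodic with period $3$, cycling through values like $-1, -2, -\tfrac12$ (or, after rescaling, through $-1, -1, +1$ in sign). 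The key bookkeeping point is that over each block of three consecutive indices, we pick up exactly two $-1$'s and one $+1$ in the signature, matching the claimed $d_{ii} = -1$ unless $i \equiv 0 \bmod 3$, in which case $d_{ii} = 1$.

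Next I would handle the last row and column (index $q$). After the elimination on the first $q-1$ coordinates, the entries that couple index $q$ to the rest are the original $+1$ in position $(q, q-1)$ and the wraparound $+1$ in position $(q,1)$; as we clear column $q$ using the already-diagonalized pivots in columns $1$ through $q-1$, each such coupling contributes a correction to the $(q,q)$ entry of the form $-(\text{coupling})^2/(\text{pivot})$. The coupling to index $q-1$ contributes via the final pivot of the $3k$-block, and the coupling to index $1$ contributes via the first pivot; when the dust settles, the $(q,q)$ entry becomes $(2m-1)$ plus a correction of $\pm 2$, the ambiguous sign reflecting the dependence on $k \bmod$ something (equivalently $q \bmod$ something), exactly as in the proposition statement. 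I would carry out the arithmetic carefully for small $k$ (say $k = 1, 2, 3$) to fix the pattern and the sign convention, then phrase the general case as an induction on $k$ using the period-$3$ structure of the pivot recursion.

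The step I expect to be the main obstacle is the careful handling of the wraparound entry together with the last pivot. In a pure tridiagonal matrix the elimination is entirely local and the recursion is clean, but the presence of the $(1,q)$ entry means that when we finally reach index $q$ we must account for a coupling to index $1$ through a pivot that has already absorbed its own chain of eliminations; getting the exact value of that pivot (not just its sign) right, and combining it correctly with the $(q-1, q)$ coupling, is where a sign error or an off-by-one in the periodicity is most likely to creep in. A clean way to manage this is to first conjugate $G(K)$ by a diagonal $\pm 1$ matrix, or rescale, so that the tridiagonal block becomes the $+2$-on-the-diagonal, $-1$-off-diagonal matrix (a type $A_{q-1}$ Cartan-like matrix) whose inverse entries are completely explicit, and then the wraparound correction can be read off from known formulas. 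I would also sanity-check the final answer against the worked example $T(3,7;6)$ displayed above (where $q = 7$, $k = 2$, $m = 3$): the predicted diagonal is $\{-1,-1,1,-1,-1,1,\,5\pm 2\}$, so $\mathrm{sign}(G(K))$ should be computable and, via $\sigma(K) = \mathrm{sign}(G(K)) - \mu(K)$ with $\mu(K) = q + 2m = 13$, must reproduce the known signature of that twisted torus knot.
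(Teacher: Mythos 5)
Your overall strategy (symmetric elimination on the tridiagonal block, then absorb the last row and the wraparound entry) is in the same spirit as the paper's inductive block-diagonalization, and your final sanity check against $T(3,7;6)$ does come out right. But two of the concrete mechanisms you rely on fail as stated. First, the pivot recursion $d_i=-1-1/d_{i-1}$ with $d_1=-1$ immediately gives $d_2=0$: the leading principal minors of the $(q-1)\times(q-1)$ block (diagonal $-1$, off-diagonal $+1$) follow $D_n=-D_{n-1}-D_{n-2}$ and equal $-1,0,1,-1,0,1,\dots$, vanishing at every index $\equiv 2 \pmod 3$. So the elimination cannot proceed in natural order, and the claimed period-$3$ pivot cycle $-1,-2,-\tfrac12$ is not what the recursion produces (nor does it satisfy it). One must interleave row/column swaps — this is exactly why the paper's explicit reductions (Figures \ref{fig:MatrixBaseCaseExplained}--\ref{fig:BlockifyPk}) contain a swap at every third step, and why the paper organizes the argument as a strong induction peeling off one $3\times 3$ block $B$ at a time while toggling between the matrices $P_{\ell,\epsilon}$ and $N_{\ell,\epsilon}$ (the sign of the wraparound entry flips at each stage, which is ultimately what produces the parity-dependent $\pm 2$). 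Second, your accounting for the $(q,q)$ entry is wrong in its details: after the block is diagonalized the last row does not couple to just two pivots, and the correct correction is the Schur complement $(2m-1)-v^{T}M^{-1}v$ with $v=e_1+e_{q-1}$. Here $(M^{-1})_{11}=(M^{-1})_{q-1,q-1}=0$ (those are the vanishing minors above), so the two ``$-(\text{coupling})^2/\text{pivot}$'' terms you describe contribute nothing (or are undefined); the entire $\pm2$ comes from the cross term $-2(M^{-1})_{1,q-1}=-2(-1)^{3k+1}$, i.e.\ from the interaction of the two couplings through the corner entry of $M^{-1}$, which your bookkeeping omits.

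Your proposed fallback — rescaling so the block becomes the $A_{q-1}$ Cartan-type matrix with $+2$ on the diagonal — is not available: the block is $-I+A$ (adjacency matrix of a path), which is indefinite, so it is not congruent over $\mathbb{R}$ to the positive-definite Cartan matrix, and no diagonal rescaling turns a $-1$ diagonal into $+2$. A repaired version of your plan does work if you replace the sequential pivoting by either (i) the paper's integral row/column operations with swaps, organized as induction on $k$, or (ii) a direct Schur-complement computation using the explicit minors $D_n$ above, which gives the signature $(k$ positive, $2k$ negative$)$ for the block and the correction $(2m-1)+2(-1)^{3k}$, matching the paper's $\epsilon+2$ for $k$ even and $\epsilon-2$ for $k$ odd. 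Note also that your elimination, involving division by pivots, only establishes congruence over $\mathbb{Q}$; that suffices for the signature application, but the proposition as stated (and the paper's proof) produces the integer diagonal matrix by integral operations.
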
 

To prove this result, we will first establish some preliminaries.

\begin{defn}
The symmetric $(3 \ell +1) \times (3 \ell+1)$ matrices $N_{\ell, \epsilon} = (n_{ij})$ and $P_{\ell, \epsilon} = (p_{ij})$ have entries as defined in \Cref{fig:DefiningPN} (left). 
\end{defn}

We note: the first parameter in the subscript identifies the dimension of the matrix, while the second index records the value $\epsilon$. \Cref{fig:DefiningPN} (right) displays $N_{1,\epsilon}$ and $P_{1, \epsilon}$. We note that the entries of $P_{\ell, \epsilon}$ and $N_{\ell, \epsilon}$ agree in all but two entries: $n_{1, 3\ell+1} = n_{3\ell+1,1}=-1$ while $p_{1, 3\ell+1} = p_{3\ell+1,1}=+1$ (indeed, $N$ and $P$ stand for ``negative'' and ``positive'', respectively).

\begin{figure}[h!]
    \begin{align*}
    n_{ij} &= \begin{cases}
    -1 & i=j \neq 3 \ell+1, \text{ or } \{i, j\} = \{1, q\} \\
    \epsilon & i=j=3 \ell+1 \\
    +1 & |i-j| = \pm 1, \text{ and either } i \geq 2 \text{ or } j \geq 2  \\
    0 & \text{otherwise}
    \end{cases} 
    \hspace{1.5cm}
    N_{1,\epsilon} = 
    \begin{pmatrix}
    \begin{array}{ccc|c}
     -1 & +1 & 0 & -1 \\
    +1 & -1 & +1 & 0 \\
     0 & +1 & -1 & +1 \\ \hline
    -1 & 0 & +1 & \epsilon \\
    \end{array}
    \end{pmatrix} 
    \\ \\
    p_{ij} &= \begin{cases}
    -1 & i=j \neq q \\
    \epsilon & i=j=q \\
    +1 & |i-j| \equiv \pm 1 \mod q \\
    0 & \text{otherwise}
    \end{cases} 
    \hspace{4.5cm}
    P_{1,\epsilon} = 
    \begin{pmatrix}
    \begin{array}{ccc|c}
     -1 & +1 & 0 & +1 \\
    +1 & -1 & +1 & 0 \\
     0 & +1 & -1 & +1 \\ \hline
    +1 & 0 & +1 & \epsilon \\
    \end{array}
    \end{pmatrix} 
    \end{align*}
\caption{Defining the matrices $N_{\ell, \epsilon}$ and $P_{\ell, \epsilon}$.}
\label{fig:DefiningPN}
\end{figure}

\begin{defn}
The matrix $B$ is the $3 \times 3$ diagonal matrix with $d_{1,1} = d_{2,2}=-1$ and $d_{3,3}=+1$.
\end{defn}

\begin{lemma} \label{lemma:BigLemma}
For all $q \geq 1$, $P_{q, \epsilon}$ is congruent to the block diagonal matrix with $q$-many $B$ factors, and a single $1 \times 1$ factor with entry $\epsilon \pm 2$. 
\end{lemma}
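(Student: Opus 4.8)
\textbf{Proof plan for \Cref{lemma:BigLemma}.}
The plan is to diagonalize $P_{q,\epsilon}$ by induction on $q$, peeling off one $B$ block at a time via congruence moves that mimic symmetric Gaussian elimination. The matrix $P_{q,\epsilon}$ is tridiagonal except for the two ``wrap-around'' entries $p_{1,q}=p_{q,1}=+1$, so the main subtlety is keeping track of how those corner entries evolve as we eliminate. First I would record the base case: a direct computation (e.g.\ the $4\times 4$ matrix $P_{1,\epsilon}$ above) shows $P_{1,\epsilon}$ is congruent, via an explicit integer change of basis $S$, to $\operatorname{diag}(-1,-1,+1,\epsilon\pm 2)$, i.e.\ to $B$ together with a $1\times 1$ block $\epsilon\pm 2$; the sign ambiguity $\pm 2$ will be pinned down by the actual elimination and is inherited in the induction.

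For the inductive step, assume the statement for $q-1$ and consider $P_{q,\epsilon}$, a $(3q+1)\times(3q+1)$ matrix. I would use the diagonal $-1$ in position $(1,1)$ as a pivot: adding suitable multiples of the first row/column to rows/columns $2$ and $q$ (the only ones with a nonzero entry in column $1$) clears the first row and column at the cost of modifying the $(2,2)$, $(2,q)$, $(q,2)$, and $(q,q)$ entries. Then use the new pivot in position $(2,2)$ to clear its row and column, and continue down the tridiagonal. Because each pivot along the way is $\pm 1$, all the elementary congruence operations are integral, so we stay over $\Z$. The key bookkeeping claim — which I would isolate as the heart of the argument — is that after eliminating the first three coordinates, the lower-right block that remains is exactly (a relabeling of) $P_{q-1,\epsilon}$, possibly after absorbing a harmless sign, so the induction closes; the first three eliminated coordinates contribute precisely one $B$ block $\operatorname{diag}(-1,-1,+1)$. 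Iterating, or equivalently completing the induction, yields $q$ copies of $B$ and the single leftover $1\times 1$ entry, which a final scalar computation identifies as $\epsilon\pm 2$.

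The main obstacle I anticipate is not the linear algebra per se but the combinatorial/index bookkeeping: verifying that the ``wrap-around'' entries $p_{1,q}$ conspire so that, after three elimination steps, the remaining block is genuinely of the same shape $P_{q-1,\epsilon}$ (same tridiagonal-plus-corner pattern, same diagonal values $-1$ except $\epsilon$ in the last slot), rather than some new matrix with extra fill-in. I would handle this by writing the congruence move explicitly in block form — partitioning coordinates as $\{1,2,3\}$ versus $\{4,\dots,3q+1\}$ — and checking that eliminating $\{1,2,3\}$ affects the lower block only in its own corner entries, in just the way needed to restore the $P_{q-1,\epsilon}$ pattern. If the pattern match requires a mild rescaling (e.g.\ conjugating by a diagonal $\pm 1$ matrix to fix a sign on a corner entry), that is a congruence and costs nothing. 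Once this single structural lemma is verified, the rest is a clean induction.
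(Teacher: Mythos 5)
Your overall strategy (peel off one $B$ block per step by symmetric elimination on the first three coordinates) is the paper's strategy, but your key structural claim is false, and this is exactly where the work lies. Eliminating coordinates $\{1,2,3\}$ of $P_{q,\epsilon}$ does \emph{not} leave a relabelled copy of $P_{q-1,\epsilon}$: the fill-in flips the wrap-around corner entries from $+1$ to $-1$, so the remaining block is the matrix the paper calls $N_{q-1,\epsilon}$ (see \Cref{P2} and \Cref{InductiveStep}). Your proposed repair, conjugating by a diagonal $\pm 1$ matrix, cannot fix this: such a conjugation multiplies the $(i,j)$ entry by $d_i d_j$, and keeping all the tridiagonal entries equal to $+1$ forces $d_1 = d_2 = \cdots = d_{3(q-1)+1}$, which leaves the corner entry's sign unchanged. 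Worse, $N_{q-1,\epsilon}$ and $P_{q-1,\epsilon}$ need not be congruent by \emph{any} change of basis: they diagonalize to $(\bigoplus B)\oplus(\epsilon+2)$ and $(\bigoplus B)\oplus(\epsilon-2)$ respectively, so for $|\epsilon|<2$ — precisely the values $\epsilon = 2m-1$ with $m=0,1$ that drive the case split in \Cref{thm:Signature} — they have different signatures. So the induction as you state it does not close on the single family $P_{q,\epsilon}$.

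The fix is to strengthen the induction to a two-family alternation, which is what the paper does: $P_{k+1,\epsilon}\sim B\oplus N_{k,\epsilon}$ and $N_{k+1,\epsilon}\sim B\oplus P_{k,\epsilon}$, with base cases $N_{1,\epsilon}\sim B\oplus(\epsilon+2)$ and $P_{1,\epsilon}\sim B\oplus(\epsilon-2)$. This alternation is also what determines the sign in the final $1\times 1$ block — $\epsilon+2$ when $q$ is even and $\epsilon-2$ when $q$ is odd — whereas your proposal would inherit a single fixed sign from the base case (always $\epsilon-2$), losing the parity dependence. That dependence is not cosmetic: it is what produces the $-4k-2m-2$ versus $-4k-2m$ dichotomy in \Cref{thm:Signature} and hence the later concordance arguments. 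So the missing idea is precisely the auxiliary family $N_{\ell,\epsilon}$ and the alternating inductive step; with that added, your elimination scheme goes through as in the paper.
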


\begin{proof}

We will need (strong) induction to prove the claim. To begin, we prove:

\begin{claim} \label{claim:PreBaseCase_N}
$N_{1,\epsilon}$ and $P_{1,\epsilon}$ are congruent to $4 \times 4$ block diagonal matrices containing $B$ as a factor, as in \Cref{fig:P1N1}.
\end{claim}

\begin{figure}[h!]
\begin{align*}
N_{1,\epsilon} &\sim 
\begin{pmatrix}
\begin{array}{ccc|c}
 -1 & 0 & 0 & 0 \\
0 & -1 & 0 & 0 \\
 0 & 0 & +1 & 0 \\ \hline
0 & 0 & 0 & \epsilon+2 \\
\end{array}
\end{pmatrix} 
\hspace{1.5cm}
P_{1,\epsilon} \sim 
\begin{pmatrix}
\begin{array}{ccc|c}
 -1 & 0 & 0 & 0 \\
0 & -1 & 0 & 0 \\
 0 & 0 & +1 & 0 \\ \hline
0 & 0 & 0 & \epsilon-2 \\
\end{array}\end{pmatrix} 
\end{align*}
\caption{The matrices $N_{1,\epsilon}$ and $P_{1,\epsilon}$ are congruent to block diagonal matrices.}
\label{fig:P1N1}
\end{figure}

\textit{Proof of \Cref{claim:PreBaseCase_N}:} 
We begin by showing $N_{1,\epsilon}$ can be diagonalized into the claimed form. The sequence of simultaneous row and column operations is written out explicitly in \Cref{fig:MatrixBaseCaseExplained}. In Step 1 and 2, we simultaneously clear the non-zero, non-diagonal entries in the first row/column. In Step 3, we simultaneously swap the second and third rows and columns. In Steps 4 and 5, we clear the non-zero, non-diagonal entries of the second row/column.

The analogous sequence of operations used to diagonalize $P_{1,\epsilon}$ are in \Cref{fig:PConguentD}. 
\hfill $\Box$

\begin{figure}[h!]
\begin{align*}
N_{1,\epsilon} =
\begin{pmatrix}
\begin{array}{ccc|c}
 -1 & +1 & 0 & -1 \\
+1 & -1 & +1 & 0 \\
 0 & +1 & -1 & +1 \\ \hline
-1 & 0 & +1 & \epsilon \\
\end{array}
\end{pmatrix} 
&\overset{1}{\sim}
\begin{pmatrix}
\begin{array}{ccc|c}
 -1 & 0 & 0 & -1 \\
0 & 0 & +1 & -1 \\
 0 & +1 & -1 & +1 \\ \hline
-1 & -1 & +1 & \epsilon \\
\end{array}
\end{pmatrix} 
\overset{2}{\sim}
\begin{pmatrix}
\begin{array}{ccc|c}
 -1 & 0 & 0 & 0 \\
0 & 0 & +1 & -1 \\
 0 & +1 & -1 & +1 \\ \hline
0 & -1 & +1 & \epsilon+1 \\
\end{array}
\end{pmatrix} 
\\ \\
\overset{3}{\sim}
\begin{pmatrix}
\begin{array}{ccc|c}
 -1 & 0 & 0 & 0 \\
0 & -1 & +1 & +1 \\
 0 & +1 & 0 & -1 \\ \hline
0 & +1 & -1 & \epsilon+1 \\
\end{array}
\end{pmatrix} 
&\overset{4}{\sim}
\begin{pmatrix}
\begin{array}{ccc|c}
 -1 & 0 & 0 & 0 \\
0 & -1 & 0 & +1 \\
 0 & 0 & +1 & 0 \\ \hline
0 & +1 & 0 & \epsilon+1 \\
\end{array}
\end{pmatrix} 
\overset{5}{\sim}
\begin{pmatrix}
\begin{array}{ccc|c}
 -1 & 0 & 0 & 0 \\
0 & -1 & 0 & 0 \\
 0 & 0 & +1 & 0 \\ \hline
0 & 0 & 0 & \epsilon+2 \\
\end{array}
\end{pmatrix} 
\end{align*}
\caption{$N_{1,\epsilon} \sim B \oplus L$, where $L$ is the $1\times 1$ matrix $(\epsilon + 2)$. The steps are enumerated, with the step number recorded above the $``\sim"$.}
\label{fig:MatrixBaseCaseExplained}
\end{figure}

\begin{figure}[h!]
\begin{align*}
P_{1,\epsilon} =
\begin{pmatrix}
\begin{array}{ccc|c}
 -1 & +1 & 0 & +1 \\
+1 & -1 & +1 & 0 \\
 0 & +1 & -1 & +1 \\ \hline
+1 & 0 & +1 & \epsilon \\
\end{array}
\end{pmatrix} 
&\overset{1}{\sim}
\begin{pmatrix}
\begin{array}{ccc|c}
 -1 & 0 & 0 & 0 \\
0 & 0 & +1 & +1 \\
 0 & +1 & -1 & +1 \\ \hline
0 & +1 & +1 & \epsilon+1 \\
\end{array}
\end{pmatrix} 
\overset{2}{\sim}
\begin{pmatrix}
\begin{array}{ccc|c}
 -1 & 0 & 0 & 0 \\
0 & -1 & +1 & +1 \\
 0 & +1 & 0 & +1 \\ \hline
0 & +1 & +1 & \epsilon+1 \\
\end{array}
\end{pmatrix} 
\\ \\
\overset{3}{\sim}
\begin{pmatrix}
\begin{array}{ccc|c}
 -1 & 0 & 0 & 0 \\
0 & -1 & 0 & +1 \\
 0 & 0 & +1 & +2 \\ \hline
0 & +1 & +2 & \epsilon+1 \\
\end{array}
\end{pmatrix} 
&\overset{4}{\sim}
\begin{pmatrix}
\begin{array}{ccc|c}
 -1 & 0 & 0 & 0 \\
0 & -1 & 0 & 0 \\
 0 & 0 & +1 & +2 \\ \hline
0 & 0 & +2 & \epsilon+2 \\
\end{array}
\end{pmatrix} 
\overset{5}{\sim}
\begin{pmatrix}
\begin{array}{ccc|c}
 -1 & 0 & 0 & 0 \\
0 & -1 & 0 & 0 \\
 0 & 0 & +1 & 0 \\ \hline
0 & 0 & 0 & \epsilon-2 \\
\end{array}
\end{pmatrix} 
\end{align*}
\caption{$P_{1,\epsilon} \sim B \oplus L$, where $L$ is the $1\times 1$ matrix $(\epsilon - 2)$.}
\label{fig:PConguentD}
\end{figure}

This proves \Cref{claim:PreBaseCase_N}.  
Next, we establish the base cases required to perform strong induction.

\begin{claim}\label{P2}
$P_{2,\epsilon}$ is congruent to a block matrix containing $B$ and $N_{1,\epsilon}$ as factors, as in \Cref{fig:BlockifyP2N2} (left). $N_{2,\epsilon}$ is congruent to a block matrix containing $B$ and $P_{1,\epsilon}$ as factors, as in \Cref{fig:BlockifyP2N2} (right).  
\end{claim}

\begin{figure}[h!]
\begin{align*}
P_{2,\epsilon}
\sim
\begin{pmatrix}
\begin{array}{c|c}
B & \\ \hline
& N_{1,\epsilon}
\end{array}
\end{pmatrix}
\hspace{2cm}
N_{2,\epsilon}
\sim
\begin{pmatrix}
\begin{array}{c|c}
B & \\ \hline
& P_{1,\epsilon}
\end{array}
\end{pmatrix}
\end{align*}
\caption{We claim that $P_{2,\epsilon} \sim B \oplus N_{1,\epsilon}$ and $N_{2,\epsilon} \sim B \oplus P_{1,\epsilon}$.}
\label{fig:BlockifyP2N2}
\end{figure}

\textit{Proof of \Cref{P2}:} 
We first prove the claim for $P_{2,\epsilon}$. We perform a sequence of elementary matrix operations in \Cref{fig:BlockifyP2}. In Step 1, we simultaneously clear the non-zero, non-diagonal entries of the first row and column. In Step 2, we swap the second and third rows and columns. In Step 3, we clear the non-zero, non-diagonal entries of the second row and column. In Step 4, we clear the non-zero, non-diagonal entries of the third row and column. The lower right block of the final matrix is exactly $N_1$. 

The proof for $N_{2,\epsilon}$ is nearly identical, so we suppress the steps for brevity. \hfill $\Box$

\begin{figure}[h!]
\begin{align*}
P_{2,\epsilon} &= 
\begin{pmatrix}
\begin{array}{ccc|ccc|c}
 -1 & +1 & 0 & 0 & 0 & 0 & +1 \\
+1 & -1 & +1 & 0 & 0 & 0 & 0 \\
 0 & +1 & -1 & +1 & 0 & 0 & 0 \\ \hline
 0 & 0 & +1 & -1 & +1 & 0  & 0\\
 0 & 0 & 0 & +1 & -1 & +1 & 0 \\
 0 & 0 & 0 & 0 & +1 & -1 & +1 \\ \hline
+1 & 0 & 0 & 0 & 0 & +1 & \epsilon \\
\end{array}
\end{pmatrix}
\overset{1}{\sim}
\begin{pmatrix}
\begin{array}{ccc|ccc|c}
 -1 & 0 & 0 & 0 & 0 & 0 & 0 \\
0 & 0 & +1 & 0 & 0 & 0 & +1 \\
 0 & +1 & -1 & +1 & 0 & 0 & 0 \\ \hline
 0 & 0 & +1 & -1 & +1 & 0  & 0\\
 0 & 0 & 0 & +1 & -1 & +1 & 0 \\
 0 & 0 & 0 & 0 & +1 & -1 & +1 \\ \hline
0 & +1 & 0 & 0 & 0 & +1 & \epsilon+1 \\
\end{array}
\end{pmatrix}
\\ \\
&\overset{2}{\sim}
\begin{pmatrix}
\begin{array}{ccc|ccc|c}
 -1 & 0 & 0 & 0 & 0 & 0 & 0 \\
0 & -1 & +1 & +1 & 0 & 0 & 0 \\
 0 & +1 & 0 & 0 & 0 & 0 & +1 \\ \hline
 0 & +1 & 0 & -1 & +1 & 0  & 0\\
 0 & 0 & 0 & +1 & -1 & +1 & 0 \\
 0 & 0 & 0 & 0 & +1 & -1 & +1 \\ \hline
0 & 0 & +1 & 0 & 0 & +1 & \epsilon+1 \\
\end{array}
\end{pmatrix}
\overset{3}{\sim}
\begin{pmatrix}
\begin{array}{ccc|ccc|c}
 -1 & 0 & 0 & 0 & 0 & 0 & 0 \\
0 & -1 & 0 & 0 & 0 & 0 & 0 \\
 0 & 0 & +1 & +1 & 0 & 0 & +1 \\ \hline
 0 & 0 & +1 & 0 & +1 & 0  & 0\\
 0 & 0 & 0 & +1 & -1 & +1 & 0 \\
 0 & 0 & 0 & 0 & +1 & -1 & +1 \\ \hline
0 & 0 & +1 & 0 & 0 & +1 & \epsilon+1 \\
\end{array}
\end{pmatrix}
\\ \\
&\overset{4}{\sim}
\begin{pmatrix}
\begin{array}{ccc|ccc|c}
 -1 & 0 & 0 & 0 & 0 & 0 & 0 \\
0 & -1 & 0 & 0 & 0 & 0 & 0 \\
 0 & 0 & +1 & 0 & 0 & 0 & 0 \\
 \hline
 0 & 0 & 0 & -1 & +1 & 0  & -1\\
 0 & 0 & 0 & +1 & -1 & +1 & 0 \\
 0 & 0 & 0 & 0 & +1 & -1 & +1 \\
 \hline
0 & 0 & 0 & -1 & 0 & +1 & \epsilon \\
\end{array}
\end{pmatrix}
=
\begin{pmatrix}
\begin{array}{c|c}
B & \\ \hline
& N_{1,\epsilon}
\end{array}
\end{pmatrix}
\end{align*}
\caption{$P_{2,\epsilon}$ is a block diagonal matrix with $P_{2,\epsilon} \sim B \oplus N_{1,\epsilon}$.}
\label{fig:BlockifyP2}
\end{figure}

\Cref{claim:PreBaseCase_N} and \Cref{P2} establish the base cases for our induction. To prove the lemma, we will use strong induction: we assume that for all $2 \leq i \leq k$, we have that $P_{i,\epsilon}$ and $N_{i, \epsilon}$ are congruent to block diagonal matrices of the form in \Cref{fig:InductionHypothesis}. Therefore, to prove the claim, it suffices to establish the inductive step.

\begin{figure}[h!]
\begin{align*}
P_{i,\epsilon}
\sim
\begin{pmatrix}
\begin{array}{c|c}
B & \\ \hline
& N_{i-1,\epsilon}
\end{array}
\end{pmatrix}
\hspace{2cm}
N_{i,\epsilon}
\sim
\begin{pmatrix}
\begin{array}{c|c}
B & \\ \hline
& P_{i-1,\epsilon}
\end{array}
\end{pmatrix}
\end{align*}
\caption{We claim $P_{i,\epsilon} \sim B \oplus N_{i-1,\epsilon}$ and $N_{i,\epsilon} \sim B \oplus P_{i-1,\epsilon}$.}
\label{fig:InductionHypothesis}
\end{figure}


\begin{claim} \label{InductiveStep}
For $k\geq 2$, the matrix 
$P_{k+1,\epsilon}$ (resp. $N_{k+1,\epsilon}$) is congruent to the block diagonal matrix with factors $B$ and $N_{k,\epsilon}$ (resp. $P_{k,\epsilon}$). 
\end{claim}

\textit{Proof of \Cref{InductiveStep}:} We prove the claim for $P_{k+1, \epsilon}$; the steps are in \Cref{fig:BlockifyPk}. We entries that stay unchanged throughout the process are labelled with a "$\star$". The suppressed block with the "$\star$" entries is a $((3k-1) \times (3k-1))$ matrix. In Step 1, we simultaneously clear the non-zero, non-diagonal entries of the first row/column. In Step 2, we swap the second and third rows and columns. In Step 3, we clear the non-zero, non-diagonal entries of the second row/column. In Step 4, we clear the non-zero, non-diagonal entries of the third row/column. Finally, we redraw our guidelines to see that the resulting matrix contains $B$ and $N_{k,\epsilon}$ as factors. 

The analogous manipulation on $N_{k+1,\epsilon}$ yields the result. 
 \hfill $\Box$

\begin{figure}[h!]
\begin{align*}
P_{k+1,\epsilon} &= 
\begin{pmatrix}
\begin{array}{cccc|ccc|c}
 -1 & +1 &  &  &  &  & & +1 \\
+1 & -1 & +1 &  &  &  &  \\
  & +1 & -1 & +1 &  &  &  \\ 
  &  & +1 & -1 & +1 &   & \\ \hline
  &  &  & +1 & \star & \star & \star \\
   &  &  &  & \star & \star & \star \\ 
  &  &  &  & \star & \star & \star&  +1 \\ \hline
+1 &  &  &  & &  & +1&  \epsilon \\
\end{array}
\end{pmatrix}
\overset{1}{\sim}
\begin{pmatrix}
\begin{array}{cccc|ccc|c}
 -1 &  &  &  &  &  & &  \\
 &  & +1 &  &  &  & &+1  \\
  & +1 & -1 & +1 &  &  &  \\ 
  &  & +1 & -1 & +1 &   & \\ \hline
  &  &  & +1 & \star & \star & \star \\
   &  &  &  & \star & \star & \star \\ 
  &  &  &  & \star & \star & \star&  +1 \\ \hline
 &  +1 &  &  & &  & +1&  \epsilon + 1 \\
\end{array}
\end{pmatrix}
\\ \\
&\overset{2}{\sim}
\begin{pmatrix}
\begin{array}{cccc|ccc|c}
 -1 &  &  &  &  &  & & \\
 & -1 & +1 & +1 &  &  & & \\
  & +1 &  &  &  &  & & +1 \\ 
  & +1 &  & -1 & +1 &   & \\ \hline
 &  &  & +1 & \star & \star & \star \\ 
  &  &  &  & \star & \star & \star \\
  &  &  &  & \star & \star & \star & +1\\ \hline
 &  & +1 &  &  &  & +1 & \epsilon+1 \\
\end{array}
\end{pmatrix}
\overset{3}{\sim}
\begin{pmatrix}
\begin{array}{cccc|ccc|c}
 -1 &  &  &  &  &  & & \\
 & -1 &  &  &  &  &  &  \\
  &  & +1 & +1 & & & & +1 \\ 
  &  & +1 &  & +1 &   & \\ \hline
  &  &  &  +1 & \star & \star & \star \\
    &  &  &  & \star & \star & \star \\
  &  &  &  & \star & \star & \star & +1\\ \hline
 &  & +1 &  &  &  &+1 & \epsilon+1 \\
\end{array}
\end{pmatrix}
\\ \\
&\overset{4}{\sim}
\begin{pmatrix}
\begin{array}{cccc|ccc|c}
 -1 &  &  &  &  &  & & \\
 & -1 &  &  &  &  &  &  \\
  &  & +1 &  & & & & \\ 
  &  &  &  -1& +1 & & & -1 \\ \hline
  &  &  &  +1 & \star & \star & \star \\
    &  &  &  & \star & \star & \star \\
  &  &  &  & \star & \star & \star & +1 \\ \hline
 &  &  & -1 &  &  &+1 & \epsilon \\
\end{array}
\end{pmatrix}
=
\begin{pmatrix}
\begin{array}{ccc|cccc|c}
 -1 &  &  &  &  &  & & \\
 & -1 &  &  &  &  &  &  \\
  &  & +1 &  & & & & \\ \hline
  &  &  &  -1& +1 & & & -1 \\ 
  &  &  &  +1 & \star & \star & \star \\
    &  &  &  & \star & \star & \star \\
  &  &  &  & \star & \star & \star & +1 \\ \hline
 &  &  & -1 &  &  &+1 & \epsilon \\
\end{array}
\end{pmatrix}
\end{align*}
\caption{Proving that $P_{i,\epsilon} \sim B \oplus N_{i-1,\epsilon}$.}
\label{fig:BlockifyPk}
\end{figure}

We can now finish the proof of \Cref{lemma:BigLemma}: \textit{for all $q \geq 1$, $P_{q, \epsilon}$ is congruent to a diagonal matrix with $q$-many $B$ factors, and a single $1 \times 1$ factor with entry $\epsilon \pm 2$.} We begin by diagonalizing the first three rows and columns of $P_{q,\epsilon}$ to produce a block diagonal matrix with factors $B$ and $N_{q-1, \epsilon}$. Then, we factor $N_{q-1, \epsilon}$ into a block diagonal matrix with factors $B$ and $P_{q-2, \epsilon}$. Exhausting this procedure eventually yields a factorization with $q$-many $B$ blocks. The remaining block is a $1 \times 1$ matrix. When $q$ is even, this entry will be $\epsilon+2$, and when $q$ is odd, the entry will be $\epsilon-2$. 
\end{proof}

This concludes the proof of \Cref{lemma:BigLemma}. We can now prove \Cref{DiagonalizeG(K)}, which claimed that $G(K)$ can be diagonalized. 

\begin{proof}[Proof of \Cref{DiagonalizeG(K)}.] Let $K$ be the specified knot, and construct a spanning surface and Goeritz matrix for $K$ as in \Cref{GoeritzSetup}. Notice that $G(K) = P_{k,2m-1}$. By \Cref{lemma:BigLemma}, $P_{k,2m-1}$ is congruent to a diagonal matrix $D$, where $D = (\bigoplus_{i=1}^k B) \oplus L$, where $L$ is a $1 \times 1$ matrix with entry $(2m-1) \pm 2$. This is what we wanted to show. 
\end{proof}

\begin{thm} \label{thm:Signature}
Let $K = T(3,3k+1;2m)$ where $k\geq 1$ and $m \geq 0$. Then 
\begin{align*}
\sigma(K) = \begin{cases}
-4k-2m-2 & \qquad k \equiv 1 \mod 2, \ \ m = 0, 1 \\
-4k-2m & \qquad \text{otherwise}
\end{cases}
\end{align*}
\end{thm}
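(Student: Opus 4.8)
The plan is to combine the congruence from \Cref{DiagonalizeG(K)} with the Gordon-Litherland formula \eqref{eqn:GordonLitherland} and then do the short bookkeeping to extract $\sigma(K)$ as a function of the parity of $k$ and the value of $m$. By \Cref{DiagonalizeG(K)}, the Goeritz matrix $G(K) = P_{k,2m-1}$ is congruent to $D = \left(\bigoplus_{i=1}^{k} B\right) \oplus L$, where each $B$ contributes two $-1$'s and one $+1$ to the diagonal, and $L$ is the $1\times 1$ block $(2m-1)\pm 2$, the sign depending on the parity of $k$ (it is $\epsilon - 2$ when $k$ is odd and $\epsilon + 2$ when $k$ is even, with $\epsilon = 2m-1$). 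By Sylvester's Law of Inertia, $\mathrm{sign}(G(K)) = \mathrm{sign}(D)$, and since $D$ is diagonal this is just the count of positive diagonal entries minus the count of negative ones.

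First I would compute $\mathrm{sign}\!\left(\bigoplus_{i=1}^{k} B\right)$: each $B$ has signature $(+1) + (-1) + (-1) = -1$, so the $k$ copies together contribute $-k$. Next I would analyze the $1\times 1$ block $L$. When $k$ is odd, $L = (2m-1) - 2 = 2m - 3$, which is negative for $m = 0, 1$ (contributing $-1$) and positive for $m \geq 2$ (contributing $+1$); note $2m-3 \neq 0$ for all integers $m\geq 0$, so there is no zero eigenvalue to worry about. When $k$ is even, $L = (2m-1) + 2 = 2m + 1 > 0$ for all $m \geq 0$, contributing $+1$. So $\mathrm{sign}(G(K)) = -k - 1$ when ($k$ odd and $m \in \{0,1\}$), and $\mathrm{sign}(G(K)) = -k + 1$ in all other cases.

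Then I would apply \eqref{eqn:GordonLitherland}: $\sigma(K) = \mathrm{sign}(G(K)) - \mu(K)$, where by \eqref{GordonLitherland_Mu} we have $\mu(K) = q + 2m = (3k+1) + 2m$. Combining: in the special case ($k$ odd, $m\in\{0,1\}$), $\sigma(K) = (-k-1) - (3k+1+2m) = -4k - 2m - 2$; otherwise $\sigma(K) = (-k+1) - (3k+1+2m) = -4k - 2m$. This matches the claimed formula exactly, so the proof is essentially a transcription of the above arithmetic.

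The only genuine subtlety — and the step I would be most careful about — is pinning down which sign ($\epsilon+2$ versus $\epsilon-2$) attaches to $L$ as a function of the parity of $k$; this is exactly the content of the last paragraph of the proof of \Cref{lemma:BigLemma} (even $q$ gives $\epsilon+2$, odd $q$ gives $\epsilon - 2$), with $q = k$ here. Everything else is routine: verifying $\mathrm{sign}(B) = -1$, checking that $2m-3$ and $2m+1$ are never zero for $m \in \Z_{\geq 0}$ so that no eigenvalue degenerates, and confirming the value of $\mu(K)$ from the crossing data established in \Cref{GoeritzSetup}. No new ideas beyond \Cref{DiagonalizeG(K)} and the Gordon-Litherland formula are needed.
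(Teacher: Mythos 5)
Your proposal is correct and follows essentially the same route as the paper: invoke \Cref{DiagonalizeG(K)} to replace $G(K)=P_{k,2m-1}$ by the diagonal matrix $\left(\bigoplus_{i=1}^{k}B\right)\oplus L$, determine the sign of $L=(2m-1)\pm 2$ from the parity of $k$ (negative exactly when $k$ is odd and $m\in\{0,1\}$), apply Sylvester's law of inertia, and subtract $\mu(K)=(3k+1)+2m$ via the Gordon--Litherland formula. The arithmetic and case analysis match the paper's proof exactly.
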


\begin{proof}
For the specified $K$, \Cref{DiagonalizeG(K)} tells us that the Goeritz matrix $G(K)$ is congruent to $D = (\bigoplus_{i=1}^k B) \oplus L$, where $L$ is a $1 \times 1$ matrix with entry $(2m-1) \pm 2$. The entry of $L$ is completely determined by $m$ and the parity of $k$. In particular, $(2m-1) \pm 2 < 0$ if and only if $m = 0, 1$ and $k$ is odd. Therefore, by applying Sylvester's theorem of inertia, we have:
\begin{align} \label{eqn:SignG(K)}
\sgn(G(K))= \sgn(P_{k,2m-1}) = \sgn(D) = \begin{cases}
-k-1 & \qquad k \equiv 1 \mod 2, \ \ m = 0, 1 \\
-k+1 & \qquad \text{otherwise}
\end{cases}
\end{align}
By \Cref{GordonLitherland_Mu}, $\mu(K) = (3k+1)+2m$. Combining this with $\sigma(K) = \sgn(G(K))-\mu(K)$, we deduce the result.
\end{proof}

\begin{rmk}
Suppose $k \geq 1$ and $m \geq 0$. If $\sigma(T(3,3k+1;2m)) = -4k-2m-2$, then by \Cref{thm:Signature}, $K$ is a torus knot: either $K \approx T(3, 3k+1)$ or $K \approx T(3,3k+1; 2)$, which, by \Cref{lemma:AdjacentTTKs}, is isotopic to $T(3, 3k+2)$. 
\end{rmk}

\subsection{Signatures and the Seifert genus}

The following lemmas study how the signatures of twisted torus knots of a fixed genus are related.


\begin{lemma} \label{lemma:DifferentSignaturesCase1}
Let $K_1 = T(3,3k+1;2m)$ and $K_2=T(3,3(k-1)+1; 2(m+3))$ where $m \geq 2$ and $(3(k-1)+1) \geq 4$. Then $g(K_1)=g(K_2)$, but $\sigma(K_1) - \sigma(K_2) = 2$.
\end{lemma}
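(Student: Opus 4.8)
The plan is to derive both assertions from two facts already in the excerpt: the Bennequin-surface computation of the Seifert genus of a positive braid knot (as used in the proof of \Cref{lemma:PotentialConcordance}), and the signature formula of \Cref{thm:Signature}. No new geometric input is needed; the proof is a short substitution into these two formulas, together with the case bookkeeping of \Cref{thm:Signature}.

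First I would compute the genera. By definition $K_1$ and $K_2$ are the closures of the positive $3$-braids $(\sigma_2\sigma_1)^{3k+1}(\sigma_2)^{2m}$ and $(\sigma_2\sigma_1)^{3(k-1)+1}(\sigma_2)^{2(m+3)}$. For a positive braid knot $\widehat{\rho}$ on $n$ strands the standard Bennequin surface is a fiber surface, so $2g(\widehat{\rho})-1 = wr(\rho)-n$. Here $n=3$; the writhe of the braid for $K_1$ is $2(3k+1)+2m = 6k+2m+2$, while the writhe of the braid for $K_2$ is $2(3(k-1)+1)+2(m+3) = (6k-4)+(2m+6) = 6k+2m+2$. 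Hence $2g(K_1)-1 = 2g(K_2)-1 = 6k+2m-1$, so $g(K_1)=g(K_2)=3k+m$.

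Next I would compute the signatures using \Cref{thm:Signature}. Since $m \geq 2$, the knot $K_1 = T(3,3k+1;2m)$ lies in the generic (``otherwise'') branch of that theorem, so $\sigma(K_1) = -4k-2m$. For $K_2 = T(3,3(k-1)+1;2(m+3))$ I apply \Cref{thm:Signature} with the parameters $k \mapsto k-1$ and $m \mapsto m+3$; the hypothesis $3(k-1)+1 \geq 4$ ensures $k-1 \geq 1$, and $m+3 \geq 5$ again places us in the generic branch, so $\sigma(K_2) = -4(k-1)-2(m+3) = -4k-2m-2$. Subtracting gives $\sigma(K_1)-\sigma(K_2) = (-4k-2m)-(-4k-2m-2) = 2$, as claimed.

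I do not expect a genuine obstacle: the content is essentially a two-line computation. The one place requiring care is the case analysis in \Cref{thm:Signature} — one must confirm that both knots (and in particular $K_2$, after the substitution $m \mapsto m+3$) fall in the ``otherwise'' branch rather than the exceptional $m\in\{0,1\}$, $k$ odd branch, since it is precisely this that yields the clean value $\sigma(K_1)-\sigma(K_2)=2$ (the exceptional branch would contribute an extra $-2$). It is also worth recording at the outset that $3k+1$ and $3(k-1)+1$ are both congruent to $1 \bmod 3$ and are at least $4$ under the stated hypotheses, so both knots are genuinely knots in the normalized form $T(3,q;2m)$ to which \Cref{thm:Signature} applies.
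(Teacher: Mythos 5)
Your proposal is correct and follows essentially the same route as the paper: equality of writhes of the two positive $3$-braids gives equal Seifert genus via the Bennequin fiber surface, and the hypotheses $m\geq 2$ and $k-1\geq 1$ place both knots in the generic branch of \Cref{thm:Signature}, yielding $\sigma(K_1)-\sigma(K_2)=2$. The explicit check that $K_2$ (with parameters $k-1$, $m+3$) avoids the exceptional $m\in\{0,1\}$ case is exactly the point the paper relies on as well.
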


\begin{proof}
As in the proof of \Cref{lemma:PotentialConcordance}, to prove that $g(K_1) = g(K_2)$, it suffices to check that $K_1$ and $K_2$ have the same writhe:
\[
wr(K_1) = 2(3k+1)+2m = 2(3k+1 + 3 - 3) + 2m = 2(3(k-1)+1) + 6 + 2m = wr(K_2)
\]

$K_1$ and $K_2$ both satisfy the hypotheses of \Cref{thm:Signature}. Since we assumed that $m \geq 2$, the signatures of $K_1$ and $K_2$ have forms in the second case of \Cref{thm:Signature}. Thus, 
\begin{align*}
\sigma(K_1) - \sigma(K_2) = (-4k-2m) - (-4(k-1)-2(m+3)) =2
\end{align*}
This is what we wanted to show.
\end{proof}

\begin{lemma} \label{lemma:DifferentSignaturesCase2}
Suppose $k \geq 1$. For $0 \leq i \leq k-1$, let $K_i = T(3, 3(k-i)+1; 6i)$. The knots $K_i$ have the same Seifert genus. Additionally, if $k \equiv 0 \mod 2$, then for all $i$, $\sigma(K_i) - \sigma(K_{i+1})=2$, but if $k \equiv 1 \mod 2$, then $\sigma(K_0) = \sigma(K_1)$, and for all $i \geq 1$, $\sigma(K_i) - \sigma(K_{i+1})=2$. 
\end{lemma}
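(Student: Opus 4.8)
The plan is to reduce both assertions to \Cref{thm:Signature} by the reparametrization $k' = k-i$, $m' = 3i$, so that $K_i = T(3,3k'+1;2m')$. The first thing I would check is that each $K_i$ with $0 \le i \le k-1$ satisfies the hypotheses of \Cref{thm:Signature}: indeed $k' = k-i \ge k-(k-1) = 1$ and $m' = 3i \ge 0$, and the difference statements implicitly concern $0 \le i \le k-2$ so that $K_{i+1}$ is also defined.

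For the genus claim I would argue exactly as in the proof of \Cref{lemma:PotentialConcordance}: each $K_i$ is the closure of the positive $3$-braid $(\sigma_2\sigma_1)^{3(k-i)+1}(\sigma_2)^{6i}$, whose writhe is $2\bigl(3(k-i)+1\bigr) + 6i = 6k+2$, independent of $i$. Since the Bennequin surface of a positive braid is a fiber surface, equal writhe on a fixed number of strands forces equal Seifert genus, so all the $K_i$ share the genus $g = \tfrac{1}{2}\bigl((6k+2)-3+1\bigr) = 3k$.

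For the signatures, the key observation is that the exceptional first case of \Cref{thm:Signature} (which subtracts an extra $2$) requires $m' \in \{0,1\}$; since $m' = 3i$, this can only occur when $i=0$, and then it additionally requires $k' = k$ to be odd. Hence: if $k$ is even, the generic formula $\sigma(K_i) = -4(k-i) - 2(3i) = -4k-2i$ applies for every admissible $i$, giving $\sigma(K_i) - \sigma(K_{i+1}) = (-4k-2i)-(-4k-2(i+1)) = 2$; if $k$ is odd, the same generic formula applies for every $i \ge 1$ (again giving difference $2$), while $\sigma(K_0) = -4k - 2$, which coincides with $\sigma(K_1) = -4k - 2$. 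This is precisely the claimed behavior.

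There is essentially no serious obstacle: the entire content is bookkeeping, and the only point demanding care is correctly isolating the unique pair $(i,k) = (0,\text{odd})$ that triggers the exceptional case of \Cref{thm:Signature}, together with tracking the index range so that $K_{i+1}$ remains a knot of the form covered by that theorem.
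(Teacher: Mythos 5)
Your proposal is correct and follows essentially the same route as the paper: both reduce the genus claim to the constant writhe $6k+2$ of these positive $3$-braid closures and the signature claim to \Cref{thm:Signature}, isolating the exceptional case $(i,k)=(0,\text{odd})$. The only cosmetic difference is that you apply \Cref{thm:Signature} uniformly via the closed form $\sigma(K_i)=-4(k-i)-6i$, whereas the paper computes $\sigma(K_0),\sigma(K_1),\sigma(K_2)$ directly and handles $i\geq 2$ by invoking \Cref{lemma:DifferentSignaturesCase1}, which is itself just \Cref{thm:Signature} applied twice.
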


\begin{proof}
It is elementary to check that all the knots have the same writhe. Since they are all positive 3-braid closures, it follows they have the same Seifert genus. 

Suppose $k$ is even. By applying the \Cref{thm:Signature}, we see that $\sigma(K_0) = -4k$, $\sigma(K_1) = -4k-2$, and $\sigma(K_2)=-4k-4$. In particular, $\sigma(K_0) - \sigma(K_1) = 2$ and $\sigma(K_1) - \sigma(K_2) = 2$. Applying \Cref{lemma:DifferentSignaturesCase1} for $2 \leq i \leq s-1$ yields the desired result. 

Now suppose $k$ is odd. \Cref{thm:Signature} tells us that $\sigma(K_0) = -4k-2$, $\sigma(K_1) = -4k-2$, and $\sigma(K_2)=-4k-4$. In particular, $\sigma(K_0) = \sigma(K_1)$ and $\sigma(K_1) - \sigma(K_2) = 2$. Applying \Cref{lemma:DifferentSignaturesCase1} for $2 \leq i \leq s-1$ concludes the proof.
\end{proof}

\begin{lemma} \label{lemma:DifferentSignaturesCase3}
Suppose $k \geq 1$. For $0 \leq i \leq k-1$, let $K_i = T(3, 3(k-i)+1; 2+6i)$. The knots $K_i$ have the same Seifert genus. Additionally, if $k \equiv 0 \mod 2$, then for all $i$, $\sigma(K_i) - \sigma(K_{i+1})=2$, but if $k \equiv 1 \mod 2$, then $\sigma(K_0) = \sigma(K_1)$, and for all $i \geq 1$, $\sigma(K_i) - \sigma(K_{i+1})=2$. 
\end{lemma}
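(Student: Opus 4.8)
The plan is to follow the proof of \Cref{lemma:DifferentSignaturesCase2} essentially verbatim, since the only change is that the twist parameter is now $2+6i$ rather than $6i$. First I would check the genus claim: the braid $(\sigma_2\sigma_1)^{3(k-i)+1}(\sigma_2)^{2+6i}$ has writhe $2\bigl(3(k-i)+1\bigr)+(2+6i)=6k+4$, which is independent of $i$. Since each $K_i$ is the closure of a positive $3$-braid, its standard Bennequin surface is a fiber surface, so (exactly as in the proof of \Cref{lemma:PotentialConcordance}) equal writhe forces $2g(K_i)-1=(6k+4)-3$ for every $i$; hence all the $K_i$ have the same Seifert genus.

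Next I would rewrite each $K_i$ in the normalization of \Cref{thm:Signature}: it is of the shape $T(3,3k'+1;2m')$ with $k'=k-i$ and $m'=1+3i$. Note that $m'=1$ occurs only at $i=0$ and $m'=0$ never occurs, so the first branch of the signature formula is relevant only at $i=0$, and only when $k$ is odd. If $k$ is even, then for every $i$ the pair $(k',m')$ lies in the ``otherwise'' branch of \Cref{thm:Signature}, giving $\sigma(K_i)=-4(k-i)-2(1+3i)=-4k-2-2i$, whence $\sigma(K_i)-\sigma(K_{i+1})=2$. (For $1\le i\le k-2$ one may instead observe that $(K_i,K_{i+1})$ is an instance of the pair $(K_1,K_2)$ in \Cref{lemma:DifferentSignaturesCase1}, with parameters $m=1+3i\ge 2$ and $3(k-i-1)+1\ge 4$, so $\sigma(K_i)-\sigma(K_{i+1})=2$ follows at once.)

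If $k$ is odd, the only change is at $i=0$: there $K_0=T(3,3k+1;2)$ has $k$ odd and $m'=1$, so $\sigma(K_0)=-4k-2-2=-4k-4$, while $K_1=T(3,3(k-1)+1;8)$ has $k-1$ even and $m'=4$, so $\sigma(K_1)=-4(k-1)-8=-4k-4=\sigma(K_0)$. For $i\ge 1$ the computation of the even case (or \Cref{lemma:DifferentSignaturesCase1} applied to $(K_i,K_{i+1})$ for $1\le i\le k-2$) still yields $\sigma(K_i)-\sigma(K_{i+1})=2$, which is exactly the asserted dichotomy.

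There is no substantive difficulty in this argument; the only point requiring care is the bookkeeping at the endpoint $i=0$, where \Cref{lemma:DifferentSignaturesCase1} does not apply because its hypothesis $m\ge 2$ fails ($m'=1$), so one must return to the two-case formula of \Cref{thm:Signature} and verify by hand which branch each of $K_0$ and $K_1$ falls into. It is precisely this $i=0$ term that produces the asymmetry between the $k$ even and $k$ odd conclusions, and getting the parity conditions in \Cref{thm:Signature} lined up correctly is the one place an error could creep in.
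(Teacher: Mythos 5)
Your proof is correct and follows essentially the same route as the paper: check equal writhe to get equal genus, apply the two-branch signature formula of \Cref{thm:Signature} (with the careful bookkeeping at $i=0$, where the parity of $k$ decides the branch and yields $\sigma(K_0)=\sigma(K_1)=-4k-4$ when $k$ is odd), and handle the remaining consecutive pairs via \Cref{lemma:DifferentSignaturesCase1} or the explicit formula. The paper simply compresses this into "nearly identical to \Cref{lemma:DifferentSignaturesCase2}" plus the same $i=0$ observation, so no further comment is needed.
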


\begin{proof}
The proof is nearly identical to the one in \Cref{lemma:DifferentSignaturesCase2}. Note that if $k \equiv 1 \mod 2$, then $\sigma(K_0)=\sigma(K_1)=-4k-4$. 
\end{proof}

To prove \Cref{thm:DistinctConcordanceClasses}, we need one last lemma which analyzes the case where a twisted torus knot and a torus knot have the same Seifert genus and signature. 

\begin{lemma} \label{lemma:DifferentSignaturesCase4}
Suppose $K_1 = T(3,q)$ is a torus knot, and $K_2$ is a twisted torus knot on three strands such that $g(K_1) = g(K_2)$ and $\sigma(K_1) = \sigma(K_2)$. The knots $K_1$ and $K_2$ are not concordant.
\end{lemma}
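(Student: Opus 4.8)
The plan is to combine the signature computation of \Cref{thm:Signature} with the Fox--Milnor theorem. First I would use \Cref{thm:Signature} to reduce to a short explicit list of pairs, and then obstruct concordance for those pairs by evaluating the Fox--Milnor factorization of the product Alexander polynomial at a suitable root of unity.

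\emph{Step 1: reduction.} We may assume $K_2$ is not a torus knot, since a twisted torus knot on three strands that happens to be a torus knot is some $T(3,q')$ (its braid index being three), and then $g(K_1)=g(K_2)$ forces $q=q'$, hence $K_1\cong K_2$. By \Cref{StandardForm} write $K_2=T(3,3k'+1;2m')$; as $T(3,3k'+1;0)$ and $T(3,3k'+1;2)\cong T(3,3k'+2)$ (\Cref{lemma:AdjacentTTKs}) are torus knots, we have $m'\geq 2$, so \Cref{thm:Signature} gives $\sigma(K_2)=-4k'-2m'$, while (as in \Cref{lemma:PotentialConcordance}) $g(K_2)=3k'+m'$. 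On the other side $K_1=T(3,q)$ with $q=3k+1$ or $q=3k+2$; \Cref{thm:Signature} applied with $m=0$, resp.\ applied to $T(3,3k+1;2)\cong T(3,3k+2)$ with $m=1$, gives $g(K_1)$ and $\sigma(K_1)$ in each case. Solving $g(K_1)=g(K_2)$ and $\sigma(K_1)=\sigma(K_2)$ in the integer parameters forces $k$ odd and $k'=k-1$, leaving exactly the two families $\{\,T(3,3k+1),\ T(3,3(k-1)+1;6)\,\}$ and $\{\,T(3,3k+2),\ T(3,3(k-1)+1;8)\,\}$ with $k\geq 3$ odd. It therefore suffices to show the two members of each such pair are not concordant.

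\emph{Step 2: the Fox--Milnor obstruction.} If $K_1$ and $K_2$ were concordant then $K_1\#(-K_2)$ would be slice, so $\Delta_{K_1}(t)\,\Delta_{K_2}(t)\doteq f(t)f(t^{-1})$ for some $f\in\Z[t]$. Evaluating at a prime-power root of unity $\omega$ that is not a root of $\Delta_{K_1}\Delta_{K_2}$ gives $|\Delta_{K_1}(\omega)\Delta_{K_2}(\omega)|=|f(\omega)|^{2}$, so this integer must be a norm from $\Z[\omega]$. At $\omega=-1$ this says $\det(K_1)\det(K_2)$ is a perfect square. For the first family $\det(T(3,3k+1))=3$ (as $3k+1$ is even), while $\det$ of the twisted torus knot, computed as $|\det G(K_2)|$ from the diagonalization of \Cref{DiagonalizeG(K)} (the relevant congruences being realized by determinant-preserving operations), equals $7$; since $3\cdot 7=21$ is not a square, these are not concordant. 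For the second family $\det(K_1)\det(K_2)=1\cdot 9=9$ is a square, so instead take $\omega=\zeta$, a primitive cube root of unity: a short Burau computation (as in \Cref{AlexPolyResult}; for the torus knot one may use $\Delta_{T(3,q)}(t)=(1+t^q+t^{2q})/(1+t+t^2)$) gives $|\Delta_{K_1}(\zeta)|=3k+2$ and $|\Delta_{K_2}(\zeta)|=3k-1$, which are coprime. Since $3k+2\equiv 3k-1\equiv 2\pmod 3$, the total multiplicity of rational primes $\equiv 2\pmod 3$ dividing $3k+2$ is odd, so not every such prime can occur to an even power in $(3k+2)(3k-1)$; hence $(3k+2)(3k-1)$ is not a norm from the Eisenstein integers $\Z[\zeta]$, contradicting Fox--Milnor. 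In all cases $K_1$ and $K_2$ are not concordant.

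\emph{Main difficulty.} The crux is the second family, where the determinant (the $\omega=-1$ evaluation) is inconclusive and one is forced to evaluate Fox--Milnor at $\omega=\zeta$ and argue number-theoretically in $\Z[\zeta]$. The remaining ingredients --- solving the genus/signature system without overlooking a pair, keeping the standard-form conventions of \Cref{StandardForm} straight, and the Burau evaluations at $-1$ and $\zeta$ --- are routine but must be carried out with care.
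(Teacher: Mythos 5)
Your proposal is correct, and it reaches the same reduction as the paper (both arguments boil the hypothesis down, via \Cref{thm:Signature}, to the two exceptional pairs $\{T(3,3k+1),\,T(3,3(k-1)+1;6)\}$ and $\{T(3,3k+2),\,T(3,3(k-1)+1;8)\}$ with $k\geq 3$ odd, and both invoke Fox--Milnor), but the obstruction step is genuinely different. The paper never evaluates at roots of unity: it observes that $\Delta_{T(3,q)}$ is a product of \emph{distinct} self-reciprocal cyclotomic factors, so a Fox--Milnor factorization of $\Delta_{K_1}\Delta_{K_2}$ would force $\Delta_{K_1}=\Delta_{K_2}$, and then it distinguishes the two Alexander polynomials by computing the reduced Burau matrices and noting $\tr(B_1(t))\neq\tr(B_2(t))$ while $\det(B_1(t))=\det(B_2(t))$. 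You instead extract numerical Fox--Milnor obstructions: the determinant test at $t=-1$ kills the first family ($3\cdot 7=21$ is not a square; your value $\det(K_2)=7$ is correct, and the congruences in \Cref{DiagonalizeG(K)} are indeed realized by unimodular operations, or one can just evaluate the Burau formula at $-1$), and the Eisenstein norm test at a primitive cube root of unity $\zeta$ kills the second family, where I confirm $|\Delta_{T(3,3k+2)}(\zeta)|=3k+2$ and $|\Delta_{T(3,3(k-1)+1;8)}(\zeta)|=3k-1$, and your parity-of-inert-primes argument that $(3k+2)(3k-1)$ is not a norm from $\Z[\zeta]$ is sound since the two factors are coprime. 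One caution on the step you label ``a short Burau computation'': for $K_2=T(3,3(k-1)+1;8)$ the quantity $\det(I_2-B_2(t))$ itself vanishes at $t=\zeta$ (as does $1+t+t^2$), so you must divide out $1+t+t^2$ (or take a limit) before evaluating; doing so does give $3k-1$, so the step is right but not quite a plug-in. Comparing the two routes: the paper's argument is more uniform (one trace computation handles both cases and shows the stronger statement that the Alexander polynomials differ), while yours replaces that with classical arithmetic invariants (determinants and $|H_1|$ of the triple branched cover, in effect), at the cost of a case split, the $\zeta$-evaluation subtlety, and a bit of algebraic number theory in $\Z[\zeta]$.
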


\begin{proof}
Suppose $K_1$ and $K_2$ are a pair of knots satisfying the hypothesis of the lemma. By \Cref{lemma:DifferentSignaturesCase2} and \Cref{lemma:DifferentSignaturesCase3}, we must be in one of the following two scenarios:

\begin{enumerate}
\item $k$ is odd, with $K_1 = T(3, 3k+1; 0)$ and $K_2=T(3, 3(k-1)+1; 6)$. 
\item $k$ is odd, with $K_1 = T(3, 3k+1; 2)$ and $K_2=T(3, 3(k-1)+1; 8)$.
\end{enumerate}

We first prove the lemma for \textbf{Case (1)}. If $K_1$ is concordant to $K_2$, then $K = K_1 \# m(K_2^r)$ is slice. The Fox-Milnor criterion says that the Alexander polynomial of $K$ would factor in a particular way: in particular, $\Delta_K(t)=f(t)f(t^{-1})$. To obstruct a concordance between $K_1$ and $K_2$, we will show that the putative factorization of Alexander polynomial does not exist.

We recall that torus knots have simple Alexander polynomials \cite{Rolfson:KnotsAndLinks}: in general, 
\begin{align*}
\Delta_{T(p,q)}(t) = \displaystyle \prod _{\substack{h|p, \ell | q \\ h, \ell \neq 1}} \phi_{\ell h}(t) 
\end{align*}
where $\phi_n(t)$ denotes the $n$-th cyclotomic polynomial. Thus, 
$\Delta_{K_1}(t) = \Delta_{T(3,q)}(t) = \displaystyle \prod _{\substack{\ell | q, \ \ell \neq 1}} \phi_{3\ell}(t)$. 

In particular, $\Delta_{K_1}(t)$ factors into a product of cyclotomic polynomials, where there are no repeated factors. Therefore, to prove that $\Delta_K(t)$ cannot factor as described by the Fox-Milnor criterion, it suffices to prove that $\Delta_{K_2}(t) \neq \Delta_{K_1}(t)$. We will use the reduced Burau matrix, which we described in \Cref{AlexPolyResult}. In particular, when a knot is realized as the closure of some braid $B$, we have
$$\frac{\det(I_{n-1}-B(t))}{1-t^n} = \frac{\Delta_{\widehat{\beta}}(t)}{1-t}$$
$K_1$ and $K_2$ are realized on $n=3$ strands. So, to show that $\Delta_{K_1}(t) \neq \Delta_{K_2}(t)$, we must prove that: 
\begin{align}
\det(I_2 - B_1(t)) \neq \det(I_2 - B_2(t)) \label{DifferentAlexPoly}
\end{align}

where $B_1(t)$ and $B_2(t)$ denote the reduced Burau matrices of $\beta_1$ and $\beta_2$, respectively, where $\widehat{\beta_i} \approx K_i$. Additionally, we notice that $\det(I_2 - B_i(t)) = q_{B_i(t)}(1)=1-\tr(B_i(t))+\det(B_i(t))$, where $q_{B(t)}(x)$ is the characteristic-type polynomial defined in \Cref{eqn:Burau}. Since $\beta_1$ and $\beta_2$ are both positive 3-braids with the same writhe, $\det(B_1(t)) = \det(B_2(t))$. Therefore, to deduce that (\ref{DifferentAlexPoly}) holds, it suffices to prove that $\tr(B_1(t)) \neq \tr(B_2(t))$. 

Recall that 
\begin{align*}
\sigma_1(t) &= 
\begin{pmatrix}
\begin{array}{cc}
-t & 1 \\
0 & 1
\end{array}
\end{pmatrix}
\hspace{2cm} \text{ and } \hspace{2cm}
\sigma_2(t) = 
\begin{pmatrix}
\begin{array}{cc}
1 & 0 \\
t & -t
\end{array}
\end{pmatrix}
\end{align*}
Therefore, we know:
\begin{align*}
\sigma_2(t) \sigma_1(t) =
\begin{pmatrix}
\begin{array}{cc}
-t & 1 \\
-t^2 & 0
\end{array}
\end{pmatrix}
\text{, } \hspace{1em}
(\sigma_2(t) \sigma_1(t))^3 =
\begin{pmatrix}
\begin{array}{cc}
t^3 & 0 \\
0& t^3
\end{array}
\end{pmatrix}
\text{, \ \ } \hspace{0.5em}
(\sigma_1)^6 =
\begin{pmatrix}
\begin{array}{cc}
t^6 & -t^5+t^4-t^3+t^2-t+1 \\
0& 1
\end{array}
\end{pmatrix}
\end{align*}

This allows us to compute $B_1(t)$ and $B_2(t)$. 
\begin{align*}
B_1(t) = (\sigma_2(t) \sigma_1(t))^{3k+1} 
&= \begin{pmatrix}
\begin{array}{cc}
t^3 & 0 \\
0& t^3
\end{array}
\end{pmatrix}^k \cdot 
\begin{pmatrix}
\begin{array}{cc}
-t & 1 \\
-t^2 & 0
\end{array}
\end{pmatrix} 
=
\begin{pmatrix}
\begin{array}{cc}
-t^{3k+1} & t^{3k} \\
-t^{3k+2} & 0
\end{array}
\end{pmatrix} \\ \\
%
%
%
B_2(t) = (\sigma_2(t) \sigma_1(t))^{3(k-1)+1} (\sigma_1)^6
&= \begin{pmatrix}
\begin{array}{cc}
t^3 & 0 \\
0& t^3
\end{array}
\end{pmatrix}^{k-1} \cdot 
\begin{pmatrix}
\begin{array}{cc}
-t & 1 \\
-t^2 & 0
\end{array}
\end{pmatrix} \cdot
\begin{pmatrix}
\begin{array}{cc}
t^6 & -t^5+t^4-t^3+t^2-t+1 \\
0 & 1
\end{array}
\end{pmatrix} \\
&= 
t^{3(k-1)}
\cdot 
\begin{pmatrix}
\begin{array}{cc}
-t^7 & t^6-t^5+t^4-t^3+t^2-t+1 \\
-t^8 & t^7-t^6+t^5-t^4+t^3-t^2
\end{array}
\end{pmatrix} \\
%
\end{align*}
It is immediate that $\tr(B_1(t)) \neq \tr(B_2(t))$. Therefore, we see that the quantities $\det(I_2 - B_1(t))$ and $\det(I_2-B_2(t))$ do not agree, thus $\Delta_{K_1}(t) \neq \Delta_{K_2}(t)$, and $\Delta_K(t)$ cannot factor as $f(t)f(t^{-1})$. 
Therefore, in \textbf{Case (1)}, the knots $K_1$ and $K_2$ cannot be smoothly concordant.

An analogous argument works for \textbf{Case (2)}, which we verify below. Recall that, by \Cref{lemma:AdjacentTTKs}, $T(3, 3k+1;2) \approx T(3, 3k+2)$. So, in this case, we have:
\begin{align*}
B_1(t) &= (\sigma_2(t) \sigma_1(t))^{3k+2} 
=
t^{3k}
\begin{pmatrix}
\begin{array}{cc}
-t & 1 \\
-t^2 & 0
\end{array}
\end{pmatrix} ^2
=
\begin{pmatrix}
\begin{array}{cc}
0 & -t^{3k+1} \\
t^{3k+3} & -t^{3k+2}
\end{array}
\end{pmatrix} \\  \\
B_2(t) &= (\sigma_2(t) \sigma_1(t))^{3(k-1)+1} (\sigma_1)^8
=
t^{3(k-1)} \cdot
\begin{pmatrix}
\begin{array}{cc}
-t & 1 \\
-t^2 & 0
\end{array}
\end{pmatrix} \cdot
\begin{pmatrix}
\begin{array}{cc}
t^8 & -t^7+t^6-t^5+t^4-t^3+t^2-t+1 \\
0 & 1
\end{array}
\end{pmatrix}  \\ 
\end{align*}

Once again, the quantities $\tr(B_1(t))$ and $\tr(B_2(t))$ cannot agree, so neither can the corresponding Alexander polynomials, and the knots cannot be concordant.
\end{proof}

We can now prove our last main result.

\textbf{\Cref{thm:DistinctConcordanceClasses}} \textit{
Let $\mathcal{S} = \mathcal{T} \cup \mathcal{L}$, where $\mathcal{T}$ is the set of all positive torus knots, and $\mathcal{L}$ is the set of L-space knots of braid index three. Every concordance contains at most one knot from $\mathcal{S}$.}


As we will see, the knot invariants $i(K)$, $\tau(K)$, and $\sigma(K)$, when combined, are sufficient to show that every knot in $\mathcal{S}$ lies in a distinct concordance class.

\begin{proof}
Let $\mathcal{S}$ be the the set of knots stated above. Every knot $K \in \mathcal{S}$ is a twist positive L-space knot, so by \Cref{thm:Lorenz}, they each realize the minimal braid index of their concordance classes. Additionally, notice that $\mathcal{T} \cap \mathcal{L} = \{T(3,q)\ |\ q \geq 4\}$. 

First, we notice that if the positive torus knots $T_1$ and $T_2$ are concordant to each other, then by \Cref{thm:Lorenz}, they have the same braid index. Therefore, $T_1=T(p,q_1)$ and $T_2=T(p,q_2)$ for some $p \geq 2, q_i \geq 3$. However, if they are concordant, then $\tau(T_1) = \tau(T_2)$. Since $T_1$ and $T_2$ are fibered, strongly quasipositive knots, then by \cite{Hedden:Positivity}, $\tau(T_1)=g(T_1)$ and $\tau(T_2)=g(T_2)$. 
Since $g(T(r,s)) = \frac{(r-1)(s-1)}{2}$, we deduce that $q_1 = q_2$, hence $T_1 \approx T_2$. Therefore, all the elements of $\mathcal{T}$ represent distinct concordance classes. 

It remains to study how the elements of $\mathcal{L}$ interact with each other, and the elements of $\mathcal{T}$. If $K \in \mathcal{L}$, then by \Cref{thm:Lorenz}, the only knots in $\mathcal{S}$ to which $K$ could be concordant must also have braid index 3, so $K$ could only be concordant to another knot in $\mathcal{L}$. Thus, it suffices to show that every $K \in \mathcal{L}$ represents a distinct concordance class.

We begin by sorting all knots in $\mathcal{L}$ by their $\tau$ invariants. Since all knots under consideration are fibered and strongly quasipositive, we can instead recast this as a sorting by Seifert genus. For all $g \geq 3$, there exists a unique torus knot $T(3,q)$ of genus $g$.  As we proved in \Cref{lemma:PotentialConcordance}, there will be $s = \lfloor \frac{q}{3} \rfloor - 1$ twisted torus knots which could be potentially concordant to it. We enumerate these knots as follows: $K_0 = T(3,q), \ldots, K_{s} \in \{T(3,4; 2(3s)), T(3,4; 2(3s+1))\}$, where (as in as in Lemmas \ref{lemma:DifferentSignaturesCase2} and \ref{lemma:DifferentSignaturesCase3}) $K_{s}$ is determined by the congruence class of $q \mod 3$.

By combining Lemmas \ref{lemma:DifferentSignaturesCase1}, \ref{lemma:DifferentSignaturesCase2}, \ref{lemma:DifferentSignaturesCase3}, we see that all the knots potentially concordant to $K_0$ all have pairwise distinct signatures (hence they must represent pairwise distinct concordance classes), or there is exactly one pair of knots with the same signature; however, if the latter occurs, we can apply \Cref{lemma:DifferentSignaturesCase4}, and deduce that none of the knots with the same $\tau$ invariant as $K_0$ can be concordant to it. Thus, all the knots $K_0, K_1, \ldots, K_s$ must lie in distinct concordance classes.

Therefore, every knot $K \in L$ lies in a unique concordance class, and these must be distinct from the classes represented by $\mathcal{T}$. Thus, every element of $\mathcal{S}$ represents a distinct concordance class.
\end{proof}

\textbf{\Cref{cor:InfinitelyDistinctConcordance}.}
\textit{There is an explicit infinite family of positive braid knots that are distinct in concordance, where as $g \to \infty$, the number of \textit{hyperbolic} knots of genus $g$ gets arbitrarily large.}

\begin{proof}
The explicit infinite family of positive braid knots is the family $\mathcal{S}$ from \Cref{thm:DistinctConcordanceClasses}. 
The statement follows by combining \Cref{thm:DistinctConcordanceClasses}, \Cref{lemma:hyperbolic}, and \Cref{lemma:PotentialConcordance}.
\end{proof}

\section{Discussion} \label{Discussion}

In this section, we justify some of the claims we made in the introduction. Namely, we provide evidence for \Cref{LspaceIndex}: \textit{for any L-space knot, the braid index and bridge index agree}. 

\begin{prop}
Let $K_n, n \geq 1$ be the infinite family of hyperbolic L-space knots defined by Baker-Kegel \cite[Section 2]{BakerKegel}. Then, for all $n$, $br(K_n)=i(K_n) = 4$. 
\end{prop}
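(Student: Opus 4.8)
The plan is to combine two ingredients: the lower bound on the bridge index coming from knot Floer homology (via \Cref{thm:AlexPoly} and the Juh\'asz--Miller--Zemke torsion order bound, exactly as in the proof of \Cref{thm:Lorenz}), and a matching upper bound on the braid index obtained by producing an explicit $4$-strand braid representative for each $K_n$. First I would recall Baker--Kegel's construction \cite[Section 2]{BakerKegel}: the $K_n$ arise from Dehn surgery on a link, and each $K_n$ is presented (or can be re-presented) as the closure of an explicit positive braid; I would write down that braid word and verify it lies in $B_4$, which immediately gives $i(K_n) \leq 4$.

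Next I would check that each $K_n$ is twist positive on $4$ strands — i.e. that the braid word contains (after conjugation and braid relations) a factor of $\Delta^2 = (\sigma_3\sigma_2\sigma_1)^4$. If this holds, then by Franks--Williams \cite{FranksWilliams} the braid index is exactly $4$, so $i(K_n) = 4$, and by \Cref{thm:AlexPoly} the Alexander polynomial has the form $\Delta_{K_n}(t) = 1 - t + t^4 + t^5 R(t)$. Since the $K_n$ are L-space knots, the argument in \Cref{thm:Lorenz} applies verbatim: the gap of size $3$ in the exponents of $\Delta_{K_n}(t)$ forces $Ord_v(K_n) \geq 3$, and then
\begin{align*}
3 \leq Ord_v(K_n) \leq br(K_n) - 1 \leq i(K_n) - 1 = 3,
\end{align*}
so equality holds throughout and $br(K_n) = i(K_n) = 4$.

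Alternatively, if establishing twist positivity turns out to be awkward, I would instead verify directly that the braid index is $4$: the lower bound $i(K_n) \geq 4$ can be obtained either from the HOMFLY (Morton--Franks--Williams) bound applied to the explicit braid, or — more in the spirit of this paper — from the fact that $br(K_n) = 4$ (which Baker--Kegel establish, or which follows from the knot Floer torsion order of their examples) together with $br \leq i$. Either route pins down $i(K_n) = 4$; combined with $br(K_n) = 4$ the proposition follows.

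The main obstacle I expect is purely bookkeeping: extracting from \cite{BakerKegel} an explicit braid word for the general member $K_n$ and manipulating it (via braid relations and conjugation, in the style of \Cref{lemma:AdjacentTTKs}) into a form that either visibly contains a full twist $\Delta^2$ on $4$ strands or visibly has writhe consistent with the Alexander polynomial computation. Baker--Kegel note that $K_1$ is provably not braid positive, so the positive-braid route cannot be used uniformly; the cleanest argument is therefore likely the Floer-theoretic one, using only the hyperbolic L-space knot structure and the known value $br(K_n) = 4$ to force $i(K_n) = 4$, rather than hunting for positive braid representatives. Verifying $br(K_n) = 4$ for all $n$ — e.g. via $Ord_v(K_n) = 3$ from the Baker--Kegel surgery description, or from their Alexander polynomial — is the step I would spend the most care on.
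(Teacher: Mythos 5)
Your fallback route is the paper's actual proof; your first-choice route would not get off the ground, so it is worth being precise about why. Baker--Kegel's knots are closures of the $4$-braids $\beta_n = (\sigma_2\sigma_1\sigma_3\sigma_2)^{2n+1}\,\sigma_1^{-1}\sigma_2\sigma_1\sigma_1\sigma_2$, which are not positive braid words, and $K_1$ is provably not braid positive --- hence not twist positive, since twist positivity is defined here only for positive braids --- and the braid positivity status of the remaining $K_n$ is open. So you cannot invoke \Cref{thm:AlexPoly} to produce the form $1-t+t^4+\cdots$, nor Franks--Williams to pin down $i(K_n)=4$; hunting for a full twist $\Delta^2$ in $\beta_n$ is not mere bookkeeping, it is (at least for $n=1$) impossible. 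Likewise, Baker--Kegel do not ``establish $br(K_n)=4$'' for you to quote, and no HOMFLY/Morton--Franks--Williams lower bound on the braid index is needed.

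What the paper does, and what your alternative correctly converges to, is the following: the closure presentation $\widehat{\beta_n}=K_n$ gives $i(K_n)\leq 4$ outright; Baker--Kegel's own computation \cite[Proposition 3.1 (3)]{BakerKegel} gives $\Delta_{K_n}(t) = 1-t+t^4+t^5+\cdots$, so the exponent gap $3$ appears; since the $K_n$ are L-space knots, Juh\'asz--Miller--Zemke's identification of $Ord_v$ with the maximal exponent gap yields $Ord_v(K_n)\geq 3$, and then
\begin{align*}
3 \leq Ord_v(K_n) \leq br(K_n)-1 \leq i(K_n)-1 \leq 3
\end{align*}
forces $br(K_n)=i(K_n)=4$. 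Note the last inequality is only $i(K_n)\leq 4$ a priori; writing ``$=3$'' there, as you did, presumes the Franks--Williams step you cannot take, though the chain closes up regardless. So: keep your explicit-$4$-braid upper bound and your Floer-theoretic lower bound via the quoted Alexander polynomial, and discard the twist-positivity detour entirely.
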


\begin{proof}
Baker-Kegel define the knot $K_n$ to be to be the closure of the 4-braid $\beta_n$, where $$\beta_n = (\sigma_2 \sigma_1 \sigma_3 \sigma_2)^{2n+1} {\sigma_1}^{-1} \sigma_2 \sigma_1 \sigma_1 \sigma_2$$

In particular, for all $n\geq 1$, $i(K_n) \leq 4$. By inspecting \cite[Proposition 3.1 (3)]{BakerKegel}, we see that 
$$\Delta_{K_n}(t) = 1-t+t^4+t^5 + \ldots + t^{4n+2} + \ldots -t^{8n+3}+t^{8n+2}$$
In particular, we see that 3 appears as a gap in the exponents of the Alexander polynomial of these knots. Thus, as in the proof of \Cref{thm:Lorenz}, $$3 \leq Ord_v(K_n) \leq br(K_n)-1 \leq i(K_n)-1 \leq 3$$
In particular, we must have equalities throughout, and $br(K_n)=i(K_n)=4$. 
\end{proof}



\begin{prop} \label{lemma:ConjForCables}
If \Cref{LspaceIndex} holds for an L-space knot $K$, then it holds for all of its cables.
\end{prop}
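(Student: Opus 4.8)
The plan is to combine the cabling criterion for L-space knots with the behavior of braid index under cabling. Recall that by Hedden \cite{Hedden:CablingII} and Hom \cite{Hom:Cabling}, the $(p,q)$-cable $K_{p,q}$ is an L-space knot if and only if $K$ is an L-space knot and $q \geq p(2g(K)-1)$. The key external input I would invoke is Williams' theorem (building on work of Birman--Menasco) that the braid index is multiplicative under cabling: $i(K_{p,q}) = p \cdot i(K)$ whenever $q/p$ is sufficiently large — precisely, whenever the cabling slope exceeds the relevant bound, which is implied by the condition $q \geq p(2g(K)-1)$ coming from the L-space hypothesis. So for an L-space cable we get $i(K_{p,q}) = p\, i(K)$ for free.

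Next I would handle the bridge index. Here the relevant fact is Schubert's classical result that the bridge number of a satellite knot satisfies $br(K_{p,q}) \geq p \cdot br(K)$ (more precisely, $br$ of a satellite with companion $K$ and pattern winding number $p$ is at least $p\, br(K)$). Combining this with the general inequality $br \leq i$ and the multiplicativity of braid index, we obtain the chain
\begin{align*}
p\, br(K) \leq br(K_{p,q}) \leq i(K_{p,q}) = p\, i(K).
\end{align*}
If \Cref{LspaceIndex} holds for $K$, then $br(K) = i(K)$, so the outer terms of this chain are equal, forcing $br(K_{p,q}) = p\, i(K) = i(K_{p,q})$. This is exactly \Cref{LspaceIndex} for the cable $K_{p,q}$, and it applies to every L-space cable by the Hedden--Hom characterization.

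The main obstacle — and the step I would be most careful about — is pinning down the precise hypotheses under which braid index is multiplicative under cabling, and checking that the L-space condition $q \geq p(2g(K)-1)$ is strong enough to guarantee it. The cleanest route is to note that an L-space cable is itself a positive braid closure (indeed strongly quasipositive and fibered), so its braid index can be computed via the HOMFLY/Morton--Franks--Williams bound, and for cables with large enough slope this bound is sharp and equals $p\, i(K)$; alternatively one cites the relevant statement directly. Schubert's bridge-number inequality for satellites is standard and requires no large-slope hypothesis, so no subtlety arises there. Once these two geometric facts are in hand, the argument is the short sandwich above, with no further computation needed.
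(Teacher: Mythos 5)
Your argument is essentially identical to the paper's proof: both use Schubert's (via Schultens) bound $br(K_{p,q}) \geq p\cdot br(K)$, Williams' multiplicativity $i(K_{p,q}) = p\cdot i(K)$, and the sandwich $p\,br(K) \leq br(K_{p,q}) \leq i(K_{p,q}) = p\,i(K)$ to force equality when $br(K)=i(K)$. Your extra care about the slope hypothesis for Williams' theorem (and the aside about the Morton--Franks--Williams bound) is reasonable hedging but not needed beyond citing Williams as the paper does, so the proposal is correct and matches the paper's route.
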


\begin{proof}
Schultens \cite{Schultens:BridgeIndex} reproves a classical result of Schubert \cite{Schubert:BridgeIndex}: suppose $S$ is a satellite knot with companion $J$ and pattern $P$ of wrapping number $k$ (the wrapping number is the minimal geometric intersection number of the pattern with a meridional disk of the pattern torus), then $br(S)\geq k\cdot br(J)$. In particular, if $K_{p,q}$ is a cable of $K$, then $br(K_{p,q}) \geq p \cdot br(K)$.  Williams \cite{Williams:Cables} proved that analogous result holds the braid index: $i(K_{p,q})=p\cdot i(K)$. Therefore, we have:
\begin{align} \label{eqn:Conjecture}
p \cdot br(K) \leq br(K_{p,q}) \leq i(K_{p,q}) = p\cdot i(K)
\end{align}

If $K$ is an L-space knot with $i(K)=br(K)$, then we have equalities throughout \Cref{eqn:Conjecture}, proving the claim. 
\end{proof}

We hope these observations provide some motivation for investigating \Cref{LspaceIndex} further.

\bibliographystyle{amsalpha2}
\bibliography{../../../masterbiblio}

\end{document}